\newtheorem{lemma}{{\sc Lemma}}[section]
\newtheorem{corollary}[lemma]{{\sc Corollary}}
\newtheorem{proposition}[lemma]{{\sc Proposition}}
\newtheorem{theorem}[lemma]{{\sc Theorem}}
\newtheorem{remark}[lemma]{{\sc Remark}}
\numberwithin{equation}{section}
\def\Gg{{\mathfrak{g}}}
\def\Gh{{\mathfrak{h}}}
\def\Gk{{\mathfrak{k}}}
\def\BA{{\mathbb{A}}}
\def\BC{{\mathbb{C}}}
\def\BF{{\mathbb{F}}}
\def\BQ{{\mathbb{Q}}}
\def\BZ{{\mathbb{Z}}}
\def\CG{{\mathcal G}}
\def\CH{{\mathcal H}}
\def\CP{{\mathcal P}}
\def\Ad{{\mathop{\rm Ad}\nolimits}}
\def\ad{{\mathop{\rm ad}\nolimits}}
\def\Fr{{\mathop{\rm Fr}\nolimits}}
\def\Gr{{\mathop{\rm{Gr}}\nolimits}}
\def\Har{{\mathop{\rm Har}\nolimits}}
\def\Hom{\mathop{\rm Hom}\nolimits}
\def\Image{\mathop{\rm Im}\nolimits}
\def\Lie{\mathop{\rm Lie}\nolimits}
\def\lo{{\mathop{\rm long}\nolimits}}
\def\Mod{\mathop{\rm Mod}\nolimits}
\def\sh{{\mathop{\rm short}\nolimits}}
\def\Spec{{\rm{Spec}}}
\def\trace{{\rm{Tr}}}
\title[the center of a quantized enveloping algebra]
{The center of a quantized enveloping algebra at an even root of unity
}
\author{Toshiyuki TANISAKI
}
\dedicatory{To Shin-ichi Kato on his 60th birthday}
\address{
Department of Mathematics, Osaka City University, 3-3-138, Sugimoto, Sumiyoshi-ku, Osaka, 558-8585 Japan}
\email{tanisaki@sci.osaka-cu.ac.jp}
\begin{document}
\maketitle
\renewcommand{\thefootnote}{}
\footnotetext
{2010 {\it Mathematics Subject Classification:}
20G42, 17B37.
}
\footnotetext
{{\it Key words and Phrases:}
Quantized enveloping algebra, Center.
}
\footnotetext
{This work was supported by JSPS KAKENHI 24540026.}

\begin{abstract}
We will give an explicit description of the center of 
the  De Concini-Kac type specialization of a quantized enveloping algebra at an even root of unity.
The case of an odd root of unity was already dealt with by De Concini-Kac-Procesi.
Our description in the even case is similar to but a little more complicated than the odd case.

\end{abstract}

\section{Introduction}
The representation theory of the De Concini-Kac type specialization of a quantized enveloping algebra at a root of unity was initiated by De Concini-Kac \cite{DK}.
It is quite different from and much more complicated than the generic parameter case.
A special feature at a root of unity is that 
the center of the quantized enveloping algebra becomes much larger than the generic parameter case. An explicit description of the center of the De Concini-Kac type specialization at a root of unity was given by De Concini-Kac-Procesi \cite{DKP} when the order of the root of unity is odd.
In this paper we give a similar description of the center in the even order case.
We point out that there already exists partial results in the even order case in Beck \cite{Beck}.

Let $U_q=U_q(\Delta)$ be the simply-connected quantized enveloping algebra associated to a finite irreducible root system $\Delta$
(the Cartan part is isomorphic to the group algebra of the weight lattice).
For $z\in\BC^\times$ we denote by $U_z=U_z(\Delta)$ the specialization at $q=z$ of the De Concini-Procesi form of $U_q$.
Set $d=1$ (resp.\ 2, resp.\ 3) when $\Delta$ is of type $A, D, E$ (resp.\ $B, C, F$, resp.\ $G_2$).
We note that $U_z$ coincides with the specialization of the more standard De Concini-Kac form if $z^{2d}\ne1$.
Let $\ell$ be a positive integer, and let $\zeta\in\BC^\times$ be a primitive $\ell$-th root of 1.
We assume that the order of $\zeta^2$ is greater than $d$.

Assume that $\ell$ is odd.
If $\Delta$ is of type $G_2$, we also assume that $\ell$ is prime to 3.
In this case De Concini-Kac-Procesi \cite{DKP} gave an explicit description of the center $Z(U_\zeta)$ as explained in the following.
Denote by $Z_{\Har}(U_\zeta)$ the subalgebra of $Z(U_\zeta)$ consisting of reductions of central elements of $U_q$ contained in the De Concini-Procesi form.
Then we have a Harish-Chandra type isomorphism $Z_{\Har}(U_\zeta)\cong\BC[2P]^W$, where $P$ is the weight lattice, $W$ is the Weyl group, and the action of $W$ on the group algebra $\BC[2P]$ is a twisted one.
On the other hand we have a Frobenius homomorphism $F:U_1\to U_\zeta$, which is an injective Hopf algebra homomorphism whose image is contained in $Z(U_\zeta)$.
Set $Z_{\Fr}(U_\zeta)=\Image(F)$.
Then De Concini-Kac-Procesi proved that the canonical homomorphism
\[
Z_{\Fr}(U_\zeta)\otimes_
{Z_{\Fr}(U_\zeta)\cap Z_{\Har}(U_\zeta)}
Z_{\Har}(U_\zeta)
\to
Z(U_\zeta)
\]
is an isomorphism.
They have also given the following geometric description of $Z(U_\zeta)$ (see also De Concini-Procesi \cite{DP}).
Denote by $G$ the connected simply-connected simple algebraic group over $\BC$ with root system $\Delta$.
Take Borel subgroups $B^+$ and $B^-$ of $G$ such that $B^+\cap B^-$ is a maximal torus of $G$.
We set $H=H(\Delta)=B^+\cap B^-$.
Denote by $N^\pm$ the unipotent radical of $B^\pm$.
Define a subgroup $K=K(\Delta)$ of $B^+\times B^-$ by
\[
K=\{(tx,t^{-1}y)\in B^+\times B^-\mid
t\in H, x\in N^+, y\in N^-\}.
\]
Then we have
\begin{align*}
&Z_{\Fr}(U_\zeta)
\cong U_1\cong\BC[K],\qquad
Z_{\Har}(U_\zeta)
\cong
\BC[H/W],\\
&
Z_{\Fr}(U_\zeta)
\cap
Z_{\Har}(U_\zeta)
\cong
\BC[H/W],
\end{align*}
and the morphisms
$K\to H/W$, $H/W\to H/W$ corresponding to the embeddings $Z_{\Fr}(U_\zeta)
\cap
Z_{\Har}(U_\zeta)
\subset 
Z_{\Fr}(U_\zeta)$
and
$Z_{\Fr}(U_\zeta)
\cap
Z_{\Har}(U_\zeta)
\subset 
Z_{\Har}(U_\zeta)$
are given by $(g_1,g_2)\mapsto \Ad(G)((g_1g_2^{-1})_s)\cap H$, and $[t]\mapsto[t^\ell]$, respectively.
Here, $g_s$ for $g\in G$ denotes the semisimple part of $g$ in its Jordan decomposition.
In conclusion, we obtain 
\[
Z(U_\zeta)\cong\BC[K\times_{H/W}{H/W}].
\]

Now assume that $\ell$ is even, or $\Delta$ is of type $G_2$ and $\ell$ is an odd multiple of 3.
We can similarly define $Z_{\Har}(U_\zeta(\Delta))$ as a subalgebra of $Z(U_\zeta(\Delta))$ isomorphic to $\BC[2P]^W\cong\BC[H(\Delta)/W]$.
However, it is a more delicate problem to define $Z_{\Fr}(U_\zeta(\Delta))$.
We have an injective Hopf algebra homomorphism $F:U_\varepsilon(\Delta')\to U_\zeta(\Delta)$,
where $\varepsilon\in\{\pm1\}$, $\Delta'\in\{\Delta,\Delta^\vee\}$ are determined from $\Delta$ and $\ell$.
Here, $\Delta^\vee$ denotes the set of coroots.
This $F$ is a dual version of the Frobenius homomorphism for the Lusztig forms defined in \cite{Lbook}.
In the case $\Delta$ is of type $G_2$ and $\ell$ is an odd multiple of 3 we have $\varepsilon=1$, $\Delta'=\Delta^\vee$ and $\Image(F)\subset Z(U_\zeta(\Delta))$.
In the case $\ell$ is even and $\varepsilon=1$, $U_1(\Delta')$ is commutative, but $\Image(F)$ is not a subalgebra of $Z(U_\zeta(\Delta))$.
In  the case $\varepsilon=-1$ $U_{-1}(\Delta')$ is non-commutative.
We define $Z_{\Fr}(U_\zeta(\Delta))$ to be the intersection $\Image(F)\cap Z(U_\zeta(\Delta))$.
Then the conclusion is similar to the odd order case.
Namely, 
the canonical homomorphism
\[
Z_{\Fr}(U_\zeta(\Delta))\otimes_
{Z_{\Fr}(U_\zeta(\Delta))\cap Z_{\Har}(U_\zeta(\Delta))}
Z_{\Har}(U_\zeta(\Delta))
\to
Z(U_\zeta(\Delta))
\]
turns out to be an isomorphism.
Moreover, 
we have
\begin{align*}
&Z_{\Fr}(U_\zeta(\Delta))
\cong \BC[K(\Delta')/\Gamma],\qquad
Z_{\Har}(U_\zeta(\Delta))
\cong
\BC[H(\Delta)/W],\\
&
Z_{\Fr}(U_\zeta(\Delta))
\cap
Z_{\Har}(U_\zeta(\Delta))
\cong
\BC[H(\Delta')/W],
\end{align*}
where, 
$\Gamma$ is a certain finite group acting on the algebraic variety $K(\Delta')$, and
the morphism
$K(\Delta')/\Gamma\to H(\Delta')/W$ is induced by $K(\Delta')\to H(\Delta')/W$.
The definition of 
$H(\Delta)/W\to H(\Delta')/W$ is more involved and omitted here.
In conclusion, we obtain 
\[
Z(U_\zeta(\Delta))\cong\BC[(K(\Delta')/\Gamma)\times_{H(\Delta')/W}{H(\Delta)/W}].
\]

The proof is partially similar to that for the odd order case in De Concini-Kac-Procesi \cite{DKP}.
However, some arguments are simplified using certain bilinear forms arising from the Drinfeld pairing.
We also note that we have avoided the usage of  quantum coadjoint orbits in this paper.
We hope to investigate the quantum coadjoint orbits in the even order case in the near future
since they should be indispensable in developing the representation theory.

In dealing with the case $\varepsilon=-1$ we use $U_{-1}(\Delta')^\Gamma\cong U_{1}(\Delta')^\Gamma$.
We establish it using a result of \cite{KKO} relating $U_{-q}$ with $U_{q}$.
I would like to thank Masaki Kashiwara for explaining it to me.

\section{Quantized enveloping algebras}
\subsection{}
Let $\Delta$ be a (finite) reduced irreducible root system in a vector space $\Gh^*_\BQ$ over $\BQ$ (we assume that $\Gh^*_\BQ$ is spanned by the elements of $\Delta$).
We denote by $W$ the Weyl group.
We fix a $W$-invariant positive definite symmetric bilinear form 
\begin{equation}
\label{eq:bilinear}
(\;,\;):\Gh_\BQ^*\times\Gh_\BQ^*\to\BQ.
\end{equation}
For $\alpha\in\Delta$ we set $\alpha^\vee=2\alpha/(\alpha,\alpha)\in\Gh^*_\BQ$.
Then $\Delta^\vee=\{\alpha^\vee\mid\alpha\in\Delta\}$ is also an irreducible root system in a vector space $\Gh^*_\BQ$.
Set
\begin{align*}
&Q=\sum_{\alpha\in\Delta}\BZ\alpha,\qquad
Q^\vee=\sum_{\alpha\in\Delta}\BZ\alpha^\vee,\\
&P=\{\lambda\in\Gh_\BQ^*\mid(\lambda,\alpha^\vee)\in\BZ\;\;(\alpha\in\Delta)\},\\
&P^\vee=\{\lambda\in\Gh_\BQ^*\mid(\lambda,\alpha)\in\BZ\;\;(\alpha\in\Delta)\}.
\end{align*}
Take a set $\Pi=\{\alpha_i\}_{i\in I}$ of simple roots of $\Delta$, and denote by $\Delta^+$  the corresponding set of positive roots of $\Delta$.
Then $\Pi^\vee=\{\alpha_i^\vee\}_{i\in I}$ is a set of simple roots of $\Delta^\vee$, and
$\Delta^{\vee +}=\{\alpha^\vee\mid\alpha\in\Delta^+\}$ is  the corresponding set of positive roots of $\Delta^{\vee}$.
We set
\begin{align*}
&Q^+=\sum_{\alpha\in\Delta^+}\BZ_{\geqq0}\alpha,\\
&
P^+=\{\lambda\in\Gh_\BQ^*\mid(\lambda,\alpha^\vee)\in\BZ_{\geqq0}\;\;(\alpha\in\Delta^+)\}.
\end{align*}
For $i\in I$ let $s_i\in W$ be the corresponding simple reflection.
We denote the standard partial order on $W$ by $\geqq$.
We denote by $\Delta_\sh$ (resp.\ $\Delta_\lo$) the set of short (resp.\ long) roots.
In our convention we have $\Delta_\sh=\Delta_\lo=\Delta$ if $\Delta$ is of type $A, D, E$.
We set 
\begin{align*}
&
d=\frac{(\alpha,\alpha)}{(\beta,\beta)}
\quad(\alpha\in\Delta_{\lo},\; \beta\in\Delta_{\sh}),\\
&
d_\alpha=\frac{(\alpha,\alpha)}{(\beta,\beta)}
\quad(\alpha\in\Delta,\; \beta\in\Delta_{\sh}), 
\qquad
d_i=d_{\alpha_i}\quad(i\in I).
\end{align*}
Define $\rho\in P\cap \frac12Q$ by
$(\rho,\alpha_i^\vee)=1\;(i\in I)$.
Define $\tilde{\rho}\in\frac12Q^\vee$ by
$
\tilde{\rho}=\frac12\sum_{\alpha\in\Delta^+}d_\alpha\alpha^\vee.
$
We have $\rho=\frac{(\alpha,\alpha)}2\tilde{\rho}$ for $\alpha\in\Delta_{\sh}$.

For $n\in\BZ_{\geqq0}$ we set
\[
[n]_t=\frac{t^n-t^{-n}}{t-t^{-1}}\in\BZ[t, t^{-1}],\qquad
[n]_t!=[n]_t[n]_{t-1}\cdots[1]_t\in\BZ[t,t^{-1}].
\]

\subsection{}
Let $\BF=\BQ(q)$ be the rational function field in the variable $q$, and set
\[
q_\alpha=q^{d_\alpha}
\quad(\alpha\in\Delta), 
\qquad
q_i=q_{\alpha_i}\quad(i\in I).
\]
We denote by $U=U(\Delta)$ the corresponding simply-connected quantized enveloping algebra over $\BF$, i.e., $U$ is an associative algebra over $\BF$ generated by the elements
$k_\lambda\;(\lambda\in P)$, $e_i, f_i\;(i\in I)$ satisfying the fundamental relations
\begin{align*}
&k_0=1, \qquad k_\lambda k_\mu=k_{\lambda+\mu}\quad(\lambda, \mu\in P),
\\
&k_\lambda e_ik_\lambda^{-1}=q_i^{(\lambda,\alpha_i^\vee)}e_i
\qquad(\lambda\in P,\;i\in I),
\\
&k_\lambda f_ik_\lambda^{-1}=q_i^{-(\lambda,\alpha_i^\vee)}f_i
\qquad(\lambda\in P,\;i\in I),
\\
&e_if_j-f_je_i=\delta_{ij}
(k_i-k_i^{-1})/(q_i-q_i^{-1})
\qquad(i, j\in I),
\\
&\sum_{n=0}^{1-a_{ij}}(-1)^ne_i^{(1-a_{ij}-n)}e_je_i^{(n)}=0
\qquad(i,j\in I,\,i\ne j),
\\
&\sum_{n=0}^{1-a_{ij}}(-1)^nf_i^{(1-a_{ij}-n)}f_jf_i^{(n)}=0
\qquad(i,j\in I,\,i\ne j),
\end{align*}
where $k_i=k_{\alpha_i}\;(i\in I)$,\quad $a_{ij}=(\alpha^\vee_i,\alpha_j)\;\;(i, j\in I)$, \quad$e_i^{(n)}=e_i^n/[n]_{q_i}!,\;f_i^{(n)}=f_i^n/[n]_{q_i}!\;\;(i\in I, n\in\BZ_{\geqq0})$.
Note that the above definition of $U(\Delta)$ does not depend on the choice of the symmetric bilinear form $(\,,\,)$.

We regard $U$ as a Hopf algebra by
\begin{align*}
&\Delta(k_\lambda)=k_\lambda\otimes k_\lambda\quad(\lambda\in P),\\
&\Delta(e_i)=e_i\otimes1+k_i\otimes e_i,\qquad
\Delta(f_i)=f_i\otimes k_i^{-1}+1\otimes f_i\quad(i\in I),\\
&\varepsilon(k_\lambda)=1\quad(\lambda\in P),\qquad
\varepsilon(e_i)=
\varepsilon(f_i)=0\quad(i\in I),\\
&S(k_\lambda)=k_\lambda^{-1}\quad(\lambda\in P),\qquad
S(e_i)=-k_i^{-1}e_i,\quad
S(f_i)=-f_ik_i\quad(i\in I).
\end{align*}

Define subalgebras $U^0, U^+, U^-, U^{\geqq0}, U^{\leqq0}$ of $U$ by
\begin{align*}
&U^0=
\langle k_\lambda \mid
\lambda\in P\rangle,\qquad
U^+=
\langle e_i \mid
i\in I\rangle,\qquad
U^-=
\langle f_i \mid
i\in I\rangle,\\
&
U^{\geqq0}=
\langle k_\lambda,\; e_i \mid
\lambda\in P,\;i\in I\rangle,\qquad
U^{\leqq0}=
\langle k_\lambda,\; f_i \mid
\lambda\in P,\;i\in I\rangle.
\end{align*}
We have $U^0=\bigoplus_{\lambda\in P}\BF k_\lambda$, and the multiplication of $U$ induces isomorphisms
\begin{align*}
&
U^+\otimes U^0\otimes U^-\cong U^-\otimes U^0\otimes U^+\cong U,\\
&
U^+\otimes U^0\cong U^0\otimes U^+\cong U^{\geqq0},\qquad
U^-\otimes U^0\cong U^0\otimes U^-\cong U^{\leqq0}
\end{align*}
of vector spaces.

We denote by $U_{\ad}$ the $\BF$-subalgebra of $U$ generated by 
$k_\lambda\;(\lambda\in Q)$, $e_i, f_i\;(i\in I)$.
We also set 
\begin{align*}
&U_{\ad}^0=
\langle k_\lambda \mid
\lambda\in Q\rangle,\qquad
\\
&
U_{\ad}^{\geqq0}=
\langle k_\lambda,\; e_i \mid
\lambda\in Q,\;i\in I\rangle,\qquad
U_{\ad}^{\leqq0}=
\langle k_\lambda,\; f_i \mid
\lambda\in Q,\;i\in I\rangle.
\end{align*}
Then we have
\begin{align*}
&
U^+\otimes U_{\ad}^0\otimes U^-\cong U^-\otimes U_{\ad}^0\otimes U^+\cong U_{\ad},\\
&
U^+\otimes U_{\ad}^0\cong U_{\ad}^0\otimes U^+\cong U_{\ad}^{\geqq0},\qquad
U^-\otimes U_{\ad}^0\cong U_{\ad}^0\otimes U^-\cong U_{\ad}^{\leqq0}.
\end{align*}

We denote by $\Mod({U}_\ad)$ the category of 
finite-dimensional ${U}_\ad$-modules $M$ with weight space decomposition
$
M=\bigoplus_{\lambda\in P}M_\lambda$, where
\[
M_\lambda=\{m\in M\mid k_im=q_i^{(\lambda,\alpha_i^\vee)}m
\quad(i\in I)\}.
\]

\subsection{}
The modified quantized enveloping algebra $\dot{U}=\dot{U}(\Delta)$ is defined as follows (see Lusztig \cite{Lbook}).
For $\gamma\in Q$ set 
$U_{\ad,\gamma}=\{u\in U_\ad\mid k_iuk_i^{-1}=q_i^{(\gamma,\alpha_i^\vee)}u\;(i\in I)\}$.
For $\lambda, \mu\in P$ we set 
\[
{}_\lambda \overline{U}_\mu=
U_\ad/
(
\sum_{i\in I}(k_i-q_i^{(\lambda,\alpha_i^\vee)})U_\ad
+
\sum_{i\in I}U_\ad(k_i-q_i^{(\mu,\alpha_i^\vee)})
)
\]
(note ${}_\lambda \overline{U}_\mu=0$ unless $\lambda-\mu\in Q$), and let
${}_\lambda p_\mu:U_\ad\to{}_\lambda \overline{U}_\mu$ be the natural map.
For $\lambda\in P$ set $1_\lambda={}_\lambda p_\lambda(1)$.
Set 
\[
\dot{U}=\bigoplus_{\lambda,\mu\in P}{}_\lambda \overline{U}_\mu.
\]
Then $\dot{U}$ is an associative algebra (without 1) by 
\[
{}_\lambda p_\mu(x){}_{\lambda'} p_{\mu'}(y)=
\begin{cases}
{}_\lambda p_{\mu'}(xy)
\quad&(\mu=\lambda')\\
0&(\mu\ne\lambda')
\end{cases}
\]
for $x\in U_{\ad,\lambda-\mu},\;y\in U_{\ad,\lambda'-\mu'}$.
Moreover, $\dot{U}$ is a $U_\ad$-bimodule by
\[
u\cdot
{}_\lambda p_\mu(x)
\cdot u'
={}_{\lambda+\gamma} p_{\mu-\gamma'}(uxu')
\qquad
(x\in U_{\ad,\lambda-\mu}, u\in U_{\ad,\gamma}, u'\in U_{\ad,\gamma'}).
\]
Then we have an isomorphism
\[
\bigoplus_{\lambda\in P} (U^-\otimes U^+)\cong\dot{U}
\qquad
((u_\lambda\otimes u'_\lambda)_{\lambda\in P}\longleftrightarrow
\sum_{\lambda\in P}u_\lambda1_\lambda u'_\lambda).
\]

We denote by $\Mod(\dot{U})$ the category of 
finite-dimensional $\dot{U}$-modules $M$ with weight space decomposition
$
M=\bigoplus_{\lambda\in P}1_\lambda M$.
Then any $M\in\Mod(U_\ad)$ is regarded as an object of $\Mod(\dot{U})$ via the action of $\dot{U}$ on $M$ given by
\[
(u1_\lambda u')m
=u p^M_\lambda(u'm)
\qquad(u\in U^-, \;u'\in U^+),
\]
where $p^M_\lambda:M\to 1_\lambda M$ is the projection with respect to the weight space decomposition of $M$.
Moreover, this correspondence gives the equivalence of categories $\Mod(U_\ad)\cong\Mod(\dot{U})$ (see Lusztig \cite{Lbook}).
It follows that 
for each $\lambda\in P^+$ there exists uniquely (up to isomorphism) a finite-dimensional irreducible $\dot{U}$-module $L(\lambda)$ such that 
\[
L(\lambda)=\bigoplus_{\mu\in\lambda-Q^+}1_\mu L(\lambda),\qquad
\dim 1_\lambda L(\lambda)=1,\]
and that
any $M\in\Mod(\dot{U})$ is isomorphic to a direct sum of $L(\lambda)$'s for $\lambda\in P^+$.

\subsection{}
We denote by $V=V(\Delta)$ 
the associative algebra over $\BF$ generated by the elements
$t_\lambda\;(\lambda\in P)$, $x_i, y_i\;(i\in I)$ satisfying the fundamental relations
\begin{align*}
&t_0=1, \qquad t_\lambda t_\mu=t_{\lambda+\mu}\quad(\lambda, \mu\in P),
\\
&t_\lambda x_it_\lambda^{-1}=q_i^{(\lambda,\alpha_i^\vee)}x_i
\qquad(\lambda\in P,\;i\in I),
\\
&t_\lambda y_it_\lambda^{-1}=q_i^{(\lambda,\alpha_i^\vee)}y_i
\qquad(\lambda\in P,\;i\in I),
\\
&x_iy_j-y_jx_i=0
\qquad(i, j\in I),
\\
&\sum_{n=0}^{1-a_{ij}}(-1)^nx_i^{(1-a_{ij}-n)}x_jx_i^{(n)}=0
\qquad(i,j\in I,\,i\ne j),
\\
&\sum_{n=0}^{1-a_{ij}}(-1)^ny_i^{(1-a_{ij}-n)}y_jy_i^{(n)}=0
\qquad(i,j\in I,\,i\ne j),
\end{align*}
where $x_i^{(n)}=x_i^n/[n]_{q_i}!,\;y_i^{(n)}=y_i^n/[n]_{q_i}!\;\;(i\in I, n\in\BZ_{\geqq0})$.
We set $t_i=t_{\alpha_i}$ for $i\in I$.

Define subalgebras $V^0, V^+, V^-, V^{\geqq0}, V^{\leqq0}$ of $V$ by
\begin{align*}
&V^0=
\langle t_\lambda \mid
\lambda\in P\rangle,\qquad
V^+=
\langle x_i \mid
i\in I\rangle,\qquad
V^-=
\langle y_i \mid
i\in I\rangle,\\
&
V^{\geqq0}=
\langle t_\lambda,\; x_i \mid
\lambda\in P,\;i\in I\rangle,\qquad
V^{\leqq0}=
\langle t_\lambda,\; y_i \mid
\lambda\in P,\;i\in I\rangle.
\end{align*}
We have $V^0=\bigoplus_{\lambda\in P}\BF t_\lambda$, and the multiplication of $V$ induces isomorphisms
\begin{align*}
&
V^+\otimes V^0\otimes V^-\cong V^-\otimes V^0\otimes V^+\cong V,\\
&
V^+\otimes V^0\cong V^0\otimes V^+\cong V^{\geqq0},\qquad
V^-\otimes V^0\cong V^0\otimes V^-\cong V^{\leqq0}
\end{align*}
of vector spaces.
Moreover, we have algebra isomorphisms
\[
\jmath^+:V^+\to U^+\quad(x_i\mapsto e_i),
\qquad
\jmath^-:V^-\to U^-\quad(y_i\mapsto f_i).
\]
\begin{remark}
{\rm
$V$ is a $q$-analogue of the enveloping algebra of a certain solvable Lie subalgebra of $\Gg\oplus\Gg$, where $\Gg$ is a simple Lie algebra with root system $\Delta$ (see \ref{subsec:K} below).
}
\end{remark}

\subsection{}
The modified version $\dot{V}=\dot{V}(\Delta)$ is defined similarly to $\dot{U}$ as follows.
Denote by $V_{\ad}$ the $\BF$-subalgebra of $V$ generated by 
$t_\lambda\;(\lambda\in Q)$, $x_i, y_i\;(i\in I)$.
For $\gamma\in Q^+$ set 
$V_{\ad,\gamma}=\{v\in V_\ad\mid t_ivt_i^{-1}=q_i^{(\gamma,\alpha_i^\vee)}v\;(i\in I)\}$.
For $\lambda, \mu\in P$ we set 
\[
{}_\lambda \overline{V}_\mu=
V_\ad/
(
\sum_{i\in I}(t_i-q_i^{(\lambda,\alpha_i^\vee)})V_\ad
+
\sum_{i\in I}V_\ad(t_i-q_i^{(\mu,\alpha_i^\vee)})
)
\]
(note ${}_\lambda \overline{V}_\mu=0$ unless $\lambda-\mu\in Q^+$), and let
${}_\lambda \pi_\mu:V_\ad\to{}_\lambda \overline{V}_\mu$ be the natural map.
For $\lambda\in P$ set $1_\lambda={}_\lambda \pi_\lambda(1)$.
Set 
\[
\dot{V}=\bigoplus_{\lambda,\mu\in P}{}_\lambda \overline{V}_\mu.
\]
Then $\dot{V}$ is an associative algebra (without 1) by 
\[
{}_\lambda \pi_\mu(x){}_{\lambda'} \pi_{\mu'}(y)=
\begin{cases}
{}_\lambda \pi_{\mu'}(xy)
\quad&(\mu=\lambda')\\
0&(\mu\ne\lambda')
\end{cases}
\]
for $x\in V_{\ad,\lambda-\mu},\;y\in V_{\ad,\lambda'-\mu'}$.
Moreover, $\dot{V}$ is a $V_\ad$-bimodule by
\[
v\cdot
{}_\lambda \pi_\mu(x)
\cdot v'
={}_{\lambda+\gamma} \pi_{\mu-\gamma'}(vxv')
\qquad
(x\in V_{\ad,\lambda-\mu}, u\in V_{\ad,\gamma}, u'\in V_{\ad,\gamma'}).
\]
Then we have an isomorphism
\[
\bigoplus_{\lambda\in P} (V^-\otimes V^+)\cong\dot{V}
\qquad
((v_\lambda\otimes v'_\lambda)_{\lambda\in P}\longleftrightarrow
\sum_{\lambda\in P}v_\lambda v'_\lambda1_\lambda).
\]

Denote by $\Mod(\dot{V})$ (resp.\ $\Mod(V_\ad)$) the category of finite-dimensional $\dot{V}$-module (resp.\ $V_\ad$-module) with weight space decomposition.
Then we have a natural equivalence $\Mod(\dot{V})\cong\Mod(V_\ad)$ of categories.

\subsection{}
We denote by
\[
\tau:U_{\ad}^{\geqq0}\times U_{\ad}^{\leqq0}\to\BF
\]
the Drinfeld pairing.
It is a bilinear form uniquely determined by the properties
\begin{align}
&\tau(1,1)=1,\\
&\tau(x,y_1y_2)=(\tau\otimes\tau)(\Delta(x),y_1\otimes y_2)
&(x\in U_{\ad}^{\geqq0},\,y_1,y_2\in U_{\ad}^{\leqq0}),\\
&\tau(x_1x_2,y)=(\tau\otimes\tau)(x_2\otimes x_1,\Delta(y))
&(x_1, x_2\in U_{\ad}^{\geqq0},\,y\in U_{\ad}^{\leqq0}),\\
&\tau(k_i,k_j)=q_i^{-(\alpha_i^\vee,\alpha_j)}
&(i, j\in I),\\
&\tau(k_\lambda, f_i)=\tau(e_i,k_\lambda)=0
&(\lambda\in Q,\,i\in I),\\
&\tau(e_i,f_j)=\delta_{ij}/(q_i^{-1}-q_i)
&(i,j\in I).
\end{align}
We define a bilinear form
\[
\sigma:U\times \dot{V}\to\BF
\]
by
\begin{align*}
&\sigma(u_+k_\mu (Su_-), v_-v_+1_\lambda)
=
\tau(u_+,\jmath^-(v_-))\delta_{\lambda,\mu}\tau(\jmath^+(v_+), u_-)\\
&\qquad\qquad
(u_{\pm}\in U^{\pm},\; v_{\pm}\in V^{\pm},\;\lambda,\mu\in P).
\end{align*}

The following result is a consequence of  Gavarini \cite[Theorem 6.2]{Gav} (see also
\cite[Proposition 3.4]{TM}).

\begin{proposition}
\label{prop:sigma-inv} 
We have
\[
\sigma(u,vv')=
(\sigma\otimes\sigma)(\Delta(u),v\otimes v')
\qquad
(u\in U,\;v, v'\in \dot{V}).
\]
\end{proposition}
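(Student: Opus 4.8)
The plan is to reduce the identity, by bilinearity, to the case where $v$ and $v'$ are monomials, and then to deduce it from the multiplicativity of the Drinfeld pairing $\tau$; conceptually this is the assertion that $\sigma$ is a Hopf pairing between $U$ and $\dot{V}$.

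Since $\dot{V}=\bigoplus_{\lambda\in P}V^-V^+1_\lambda$ and $\sigma$ is linear in each variable, it suffices to treat $v=v_-v_+1_\lambda$ and $v'=v'_-v'_+1_{\lambda'}$ with $v_\pm,v'_\pm$ weight vectors of $V^\pm$, and $u=u_+k_\mu(Su_-)$ with $u_\pm$ weight vectors of $U^\pm$ and $\mu\in P$, as such elements span $U$. The first step is to compute $vv'$ inside $\dot{V}$: moving the idempotents past weight vectors (one has $x_i1_\lambda=1_{\lambda+\alpha_i}x_i$ and $y_i1_\lambda=1_{\lambda+\alpha_i}y_i$, both shifting by $+\alpha_i$ because of the relations defining $V$) and using $V^+V^-=V^-V^+$, one obtains
\[
vv'=\delta_{\lambda,\ \lambda'+\wt(v'_-)+\wt(v'_+)}\,(v_-v'_-)(v_+v'_+)\,1_{\lambda'},
\]
again a monomial of the same shape. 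Substituting this into the definition of $\sigma$, using that $\eta^\pm$ are algebra homomorphisms, and then invoking $\tau(x,y_1y_2)=(\tau\otimes\tau)(\Delta(x),y_1\otimes y_2)$ and $\tau(x_1x_2,y)=(\tau\otimes\tau)(x_2\otimes x_1,\Delta(y))$, one rewrites $\sigma(u,vv')$ as a sum of products of two $\tau$-factors indexed by the coproducts of $u_+$ and $u_-$.

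For the right-hand side one expands $\Delta(u)=\Delta(u_+)(k_\mu\otimes k_\mu)\Delta(Su_-)$ using the triangular form of the coproduct on the generators, together with $\Delta\circ S=(S\otimes S)\circ\Delta^{\op}$; each tensor leg is then put into the canonical form ``$U^+\cdot U^0\cdot S(U^-)$'' and paired against $v$ and $v'$ via the definition of $\sigma$ and the triangularity of $\tau$ (it vanishes on $U^+_\beta k_\gamma\times U^-_{-\beta'}$ unless $\beta=\beta'$, reducing to $\tau(U^+_\beta,U^-_{-\beta})$ times a Cartan scalar $\tau(k_\gamma,k_{-\beta})$). Comparing the result with the expression obtained in the previous paragraph --- in particular matching the Kronecker deltas and the Cartan scalars --- yields the claim.

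I expect the last step to be the main obstacle: one must keep careful track of the $k$-factors produced by the coproduct on $U^+$, $U^-$ and by the antipode, and check that after applying $\tau$ they reassemble into exactly the Cartan scalar and the Kronecker deltas coming from $\sigma(u,vv')$; in particular the flip of tensor factors in $\tau(x_1x_2,y)=(\tau\otimes\tau)(x_2\otimes x_1,\Delta(y))$ has to be reconciled with the co-opposite coproduct in $\Delta\circ S$. This bookkeeping is routine but delicate. Alternatively, the statement can be obtained from Gavarini's \cite[Theorem~6.2]{Gav}, where the relevant pairing and its compatibility with products and coproducts are established in a more general framework (see also \cite{TM}).
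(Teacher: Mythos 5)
Your proposal takes a genuinely different route from the paper: the paper offers no computation at all, deducing the proposition from Gavarini \cite[Theorem 6.2]{Gav} (see also \cite[Proposition 3.4]{TM}), which is precisely the alternative you mention in your closing sentence. Your primary route --- reduce to monomials, use $x_iy_j=y_jx_i$ and the fact that both $x_i$ and $y_i$ shift the idempotents by $+\alpha_i$ to get $vv'=\delta_{\lambda,\lambda'+\wt(v'_-)+\wt(v'_+)}(v_-v'_-)(v_+v'_+)1_{\lambda'}$, then invoke the two multiplicativity axioms of $\tau$ --- is sound, and the bookkeeping you flag as the main obstacle does close. Concretely, writing $u=u_+k_\mu S(u_-)$, $\Delta(u_+)=\sum u_+^{(1)}k_{\beta_2}\otimes u_+^{(2)}$ and $\Delta(u_-)=\sum u_-^{(1)}\otimes k_{-\gamma_1}u_-^{(2)}$, the only Cartan scalar on the left comes from $\tau(\jmath^+(v_+)\jmath^+(v'_+),u_-)$, namely $\tau(k_{\wt(v_+)},k_{-\gamma_1})$; on the right the same scalar appears when the factor $k_{\gamma_1}$ in $\Delta(Su_-)=\sum S(u_-^{(2)})k_{\gamma_1}\otimes S(u_-^{(1)})$ is commuted past $S(u_-^{(2)})$ into the Cartan slot of the first tensor leg, and the Kronecker deltas agree because the vanishing properties of $\tau$ force $\beta_2=\wt(v'_-)$ and $\gamma_1=\wt(v'_+)$, so that $\delta_{\lambda,\mu+\beta_2+\gamma_1}$ coincides with $\delta_{\lambda',\mu}\,\delta_{\lambda,\lambda'+\wt(v'_-)+\wt(v'_+)}$ on the support. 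What each approach buys: the citation is short and situates $\sigma$ in a general framework of Hopf pairings between $U$ and the quantized function algebra of the dual group, while your direct check is self-contained and shows explicitly why no extraneous $q$-powers survive; but as written it defers exactly this matching ("routine but delicate"), so to stand as a complete proof it should include the verification sketched above rather than assert it.
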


\subsection{}
For a Hopf algebra $H$ we define a left action of $H$ on $H$
by
\[
\ad(h)(h')=\sum_{j}h_{0j}h'(Sh_{1j})
\quad(h, h'\in H,\;\Delta(h)=\sum_jh_{0j}\otimes h_{1j}).
\]
We define a right action of $U_\ad$ on $\dot{U}$ by 
\[
x\cdot\widetilde{\ad}(u)=\sum_{j}(Su_{0j})xu_{1j}
\quad(x\in \dot{U},\; u\in U_\ad,\;\Delta(u)=\sum_ju_{0j}\otimes u_{1j}).\]

We set 
\[
{}^eU=\sum_{\lambda\in P}U^+k_{2\lambda}(SU^-)\subset U.
\]
Then ${}^eU$ is a subalgebra of $U$ satisfying
$
\ad(U)({}^eU)\subset {}^eU.
$
Define a bilinear form
\[
\omega:{}^eU\times \dot{U}\to\BF
\]
by
\begin{align*}
&\omega(u_+k_{2\mu} (Su_-), w_-1_{\lambda}(Sw_+))
=
\tau(u_+,w_-)
\delta_{\lambda,-\mu}
\tau(w_+, u_-)\\
&\qquad\qquad
(u_{\pm}, w_\pm\in U^{\pm},\; \lambda,\mu\in P).
\end{align*}

The following result is a consequence of \cite[Proposition 2.2.1]{T0}.
\begin{proposition}
\label{prop:omega-inv} 
We have
\[
\omega(\ad(u')(u),x)=
\omega(u,x\cdot\widetilde{\ad}(u'))
\qquad
(u\in{}^eU, u'\in U,\;x\in \dot{U}).
\]
\end{proposition}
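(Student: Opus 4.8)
The plan is to reduce the identity first to the case where $u'$ is one of the algebra generators $k_\lambda\ (\lambda\in P)$, $e_i$, $f_i\ (i\in I)$ of $U$, and then to a pair of elementary adjunction formulas in which the whole content resides. The first reduction is formal: $\ad$ is a left action of $U$ on itself and $\widetilde{\ad}$ is a right action of $U_\ad$ on $\dot U$ (and $\ad(U)({}^eU)\subseteq{}^eU$), so if the asserted equality holds for $u'=u_1'$ and for $u'=u_2'$, then for all $u\in{}^eU$, $x\in\dot U$,
\begin{align*}
\omega(\ad(u_1'u_2')(u),x)
&=\omega\bigl(\ad(u_1')(\ad(u_2')(u)),x\bigr)\\
&=\omega\bigl(\ad(u_2')(u),x\cdot\widetilde{\ad}(u_1')\bigr)
=\omega\bigl(u,x\cdot\widetilde{\ad}(u_1'u_2')\bigr),
\end{align*}
so it holds for $u'=u_1'u_2'$ as well. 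Since both sides are also $\BF$-bilinear in $(u,x)$, we may take $u=u_+k_{2\mu}(Su_-)$ and $x=w_-1_\lambda(Sw_+)$ with $u_\pm,w_\pm\in U^\pm$ and $\lambda,\mu\in P$; then $\omega(u,x)=\tau(u_+,w_-)\,\delta_{\lambda,-\mu}\,\tau(w_+,u_-)$, which vanishes unless $\wt(u_+)+\wt(w_-)=0$ and $\wt(u_-)+\wt(w_+)=0$.

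For $u'=k_\lambda$ the identity is immediate: $\ad(k_\lambda)(u)=k_\lambda u k_\lambda^{-1}$ is a scalar multiple of $u$ depending only on $\wt(u)$, while $x\cdot\widetilde{\ad}(k_\lambda)=k_{-\lambda}xk_\lambda$ is the scalar attached to $-\wt(x)$ times $x$; since $S$ preserves weights, $\wt(u)=\wt(u_+)+\wt(u_-)$ and $\wt(x)=\wt(w_-)+\wt(w_+)$, so $\wt(u)=-\wt(x)$ on the support of $\omega$ and the two scalars coincide. For $u'=e_i$ the relations $\Delta(e_i)=e_i\otimes1+k_i\otimes e_i$ and $S(e_i)=-k_i^{-1}e_i$ give
\[
\ad(e_i)(u)=e_iu-k_iuk_i^{-1}e_i,\qquad
x\cdot\widetilde{\ad}(e_i)=k_i^{-1}xe_i-k_i^{-1}e_ix,
\]
and likewise for $f_i$ from $\Delta(f_i)=f_i\otimes k_i^{-1}+1\otimes f_i$ and $S(f_i)=-f_ik_i$. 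The factors $k_i^{\pm1}$ occurring here act on $u$ and on $x=w_-1_\lambda(Sw_+)$ by scalars (for $x$, they are absorbed into $1_\lambda$), so the identity for $u'=e_i$, resp.\ $f_i$, becomes equivalent to a pair of adjunction relations expressing how left and right multiplication by $e_i$, resp.\ $f_i$, on ${}^eU$ passes, under $\omega$, to right and left multiplication on $\dot U$.

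I expect these adjunction relations to be the only genuine obstacle. They are proved by expanding $\omega$ through its definition in terms of the Drinfeld pairing $\tau$. On the ${}^eU$-side, left (resp.\ right) multiplication of an element of $U^\pm$ by $e_i$ or $f_i$ is controlled by the bicharacter identities for $\tau$ together with the values $\tau(k_i,k_j)$, $\tau(e_i,f_j)$ and $\tau(k_i,f_j)=\tau(e_i,k_j)=0$ — one must first bring products like $u_+k_{2\mu}(Su_-)e_i$ back into standard form using the triangular decomposition of $U$ — while on the $\dot U$-side, multiplying a spanning element $w_-1_\lambda(Sw_+)$ by $e_i$ or $f_i$ on either side is computed directly from the $U_\ad$-bimodule structure of $\dot U$; comparing the two expansions term by term yields the relations, hence the proposition. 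This is in substance \cite[Proposition 2.2.1]{T0}. One may also observe that $\omega$ is closely tied to the pairing $\sigma$ of the preceding subsection through the isomorphisms $\jmath^\pm:V^\pm\to U^\pm$, which places the statement in the same circle of ideas as Proposition \ref{prop:sigma-inv}.
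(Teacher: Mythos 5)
The paper offers no argument for this proposition beyond the citation of \cite[Proposition 2.2.1]{T0}, and your proposal follows what is in substance the same (and the standard) route, carried a bit further than the paper does: the reduction to the algebra generators of $U$, using that $\ad$ is a left action, $\widetilde{\ad}$ a right action and $\ad(U)({}^eU)\subset{}^eU$, is correct; the case $u'=k_\lambda$ is correctly settled by the weight-support of $\omega$; and your formulas for $\ad(e_i)(u)$ and $x\cdot\widetilde{\ad}(e_i)$ are right.

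Two caveats. First, the pair of adjunction identities for $e_i$ and $f_i$ that you isolate is not a residual detail but the entire content of the proposition, and ``comparing the two expansions term by term yields the relations'' is a declaration of intent rather than a verification; this is precisely the computation carried out in \cite[Proposition 2.2.1]{T0}, so your argument is complete only modulo the same external reference that the paper itself invokes. Second, a point your sketch glosses over: although $\ad(e_i)(u)\in{}^eU$, the separate terms are not. For $u=u_+k_{2\mu}(Su_-)$ the term $k_iuk_i^{-1}e_i$, once rewritten in triangular form, produces factors $k_{2\mu\pm\alpha_i}$ of odd weight, which lie outside ${}^eU$, so the ``adjunction relations'' you want to state ($\omega$ applied to left and right multiplication by $e_i$ separately) are not literally defined on ${}^eU\times\dot{U}$. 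One must first extend $\omega$ to all of $U\cong\bigoplus_{\nu\in P}U^+k_\nu(SU^-)$ paired with $\dot{U}$ (by the same formula, with $U^+k_\nu(SU^-)$ pairing to zero against $w_-1_\lambda(Sw_+)$ unless $\nu=-2\lambda$) and establish the term-by-term identities for this extended form; the extension is harmless, but it is exactly where the bookkeeping with the Drinfeld pairing has to be done.
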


Set
\[
{}^fU=\{u\in U\mid\dim\ad(U)(u)<\infty\}.
\]
Then  ${}^fU$ is a subalgebra of ${}^eU$ and we have
\[
{}^fU=\sum_{\lambda\in P^+}\ad(U)(k_{-2\lambda})
\]
(see \cite{JL}).

\subsection{}
We denote Lusztig's braid group action on $U$ by $T_i\;(i\in I)$.
Namely, $T_i:U\to U$ is 
the algebra automorphism given by
\begin{align*}
&T_i(k_\lambda)=k_{s_i(\lambda)}\quad(\lambda\in P),\\
&T_i(e_j)=
\begin{cases}
-f_ik_i&(i=j)\\
\sum_{r=0}^{-a_{ij}}(-1)^{-a_{ij}-r}q_i^{-r}e_i^{(-a_{ij}-r)}e_je_i^{(r)}
\qquad&(i\ne j),
\end{cases}\\
&T_i(f_j)=
\begin{cases}
-k_i^{-1}e_i&(i=j)\\
\sum_{r=0}^{-a_{ij}}(-1)^{r}q_i^{-a_{ij}-r}f_i^{(-a_{ij}-r)}f_jf_i^{(r)}
\qquad&(i\ne j).
\end{cases}
\end{align*}

We denote by $w_0$ the longest element of $W$.
We fix a reduced expression
$w_0=s_{i_1}\cdots s_{i_N}\;(i_1,\dots, i_N\in I)$ in the following.
For $j=1,\dots, N$ set $\beta_j=s_{i_1}\cdots s_{i_{j-1}}(\alpha_{i_j})$, and
\begin{align*}
&e_{\beta_j}=T_{i_1}\cdots T_{i_{j-1}}(e_{i_j}),
\qquad
f_{\beta_j}=T_{i_1}\cdots T_{i_{j-1}}(f_{i_j}),\\
&e_{\beta_j}^{(n)}=T_{i_1}\cdots T_{i_{j-1}}(e_{i_j}^{(n)}),
\qquad
f_{\beta_j}^{(n)}=T_{i_1}\cdots T_{i_{j-1}}(f_{i_j}^{(n)})\qquad(n\in\BZ_{\geqq0}).
\end{align*}
Then we have $\Delta^+=\{\beta_j\mid j=1,\dots, N\}$, and 
$e_\beta\in U^+,\; f_\beta\in U^-\;(\beta\in\Delta^+)$.
Moreover, the set 
$\{
e_{\beta_N}^{m_N}\cdots e_{\beta_1}^{m_1}
\mid m_j\in\BZ_{\geqq0}
\}$
(resp.\
$\{
f_{\beta_N}^{m_N}\cdots f_{\beta_1}^{m_1}
\mid m_j\in\BZ_{\geqq0}
\}$
)
is known to be a basis of $U^+$ (resp.\ $U^-$).
\subsection{}
We set $\CG=\CG(\Delta)=P/P_0$, where
\[
P_0=\{\lambda\in P
\mid
d_i(\lambda,\alpha_i^\vee)\in 2\BZ\;(i\in I)\}.
\]
Note that $\CG$ is a 2-elementary finite group.
For $\lambda\in P$ we denote by $\delta_\lambda\in\CG$ the element represented by $\lambda$.
We define an action of $\CG$ on the algebra $U$ by
\begin{align*}
\delta_\lambda(k_\mu)=k_\mu,\quad
\delta_\lambda(e_i)=(-1)^{d_i(\lambda,\alpha_i^\vee)}e_i,\quad
\delta_\lambda(f_i)=(-1)^{d_i(\lambda,\alpha_i^\vee)}f_i
\end{align*}
for $\lambda, \mu\in P, i\in I$.
We define an $\BF$-algebra structure of $\widetilde{U}=\widetilde{U}(\Delta)=U\otimes\BF[\CG]$ by
\[
(u\otimes \delta)(v\otimes \delta')=u \delta(v)\otimes \delta\delta'
\qquad(u, v\in U,\quad\delta, \delta'\in\CG).
\]
We will identify $U$ and $\BF[\CG]$ with the subalgebras 
$U\otimes 1$ and $1\otimes \BF[\CG]$
of $\widetilde{U}$ respectively.
We extend the $\CG$-action on $U$ to that on $\widetilde{U}$ by
$\delta(x)=\delta x\delta^{-1}\;(\delta\in\CG, x\in\widetilde{U})$.
Set 
\[
U^\CG=\{u\in U\mid \delta(u)=u\;(\delta\in\CG)\},\quad
\widetilde{U}^\CG=\{x\in \widetilde{U}\mid \delta(x)=x\;(\delta\in\CG)\}.
\]
Then we see easily that 
$
\widetilde{U}^\CG={U}^\CG\BF[\CG].
$
\subsection{}
Let $\theta$ be the automorphism of the field $\BF$ sending $q$ to $-q$.
For an $\BF$-algebra $R$ we
denote by ${}^\theta R$ the $\BF$-algebra obtained by twisting the $\BF$-module structure of $R$ via $\theta$.
Namely,  ${}^\theta R$  is isomorphic to $R$ as a ring via the correspondence 
$R\ni x\leftrightarrow{}^\theta x\in{}^\theta R$, and the $\BF$-module structure is given by 
$c\,{}^\theta x={}^\theta(\theta(c)x)\;(c\in\BF, x\in R)$.

Now we are going to define an embedding of ${}^\theta U$ into $\widetilde{U}$ following \cite{KKO}.
We can take a subset $J$ of $I$ such that for $i, j\in I$ with $i\ne j$ we have
\[
d_i(\alpha_i^\vee,\alpha_j)\not\in 2\BZ
\Longrightarrow
|\{i, j\}\cap J|=1.
\]
For $i\in I$ set
\[
\varphi_i=
\begin{cases}
\delta_{\alpha_i}\qquad&(i\in J)\\
1&(i\not\in J),
\end{cases}
\qquad
\psi_i=(-1)^{d_i}\varphi_i\delta_{\alpha_i}.
\]
For $\gamma=\sum_{i\in I}m_i\alpha_i\in Q$ we further set
\[
\varphi_\gamma=\prod_{i\in I}\varphi_i^{m_i},\qquad
\psi_\gamma=\prod_{i\in I}\psi_i^{m_i}.
\]

\begin{proposition}[\cite{KKO}]
\label{prop:kashiwara1}
An embedding ${}^\theta U\to\widetilde{U}$ of $\BF$-algebras is given by
\[
{}^\theta k_\lambda\mapsto k_\lambda \delta_\lambda,\qquad
{}^\theta e_i\mapsto e_i\varphi_i,\qquad
{}^\theta f_i\mapsto f_i\psi_i.
\]
\end{proposition}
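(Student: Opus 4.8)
The plan is to verify that the proposed assignment extends to a well-defined algebra homomorphism $\Phi:{}^\theta U\to\widetilde U$ and that this homomorphism is injective, by checking the defining relations of $U$ one at a time under the twist $\theta$. First I would record the basic commutation rules in $\widetilde U$ that follow from the definition of the $\CG$-action and the cross relation $(u\otimes\delta)(v\otimes\delta')=u\,\delta(v)\otimes\delta\delta'$: namely, for $\lambda\in P$ and $i\in I$, the element $\delta_\lambda$ commutes with $k_\mu$ but conjugates $e_i,f_i$ by the sign $(-1)^{d_i(\lambda,\alpha_i^\vee)}$, and the $\varphi_i,\psi_i$ (being products of such $\delta$'s and scalars) obey the analogous sign rules. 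Since $\CG$ is $2$-elementary, $\varphi_i^2=\psi_i^2=1$ and all the group-like factors commute among themselves, so the combinatorics reduces to bookkeeping of signs $\pm1$.

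The heart of the argument is then relation-by-relation. For the torus relations $k_0=1$, $k_\lambda k_\mu=k_{\lambda+\mu}$ one uses $\delta_0=1$ and $\delta_\lambda\delta_\mu=\delta_{\lambda+\mu}$, which is immediate since $\lambda\mapsto\delta_\lambda$ is a homomorphism $P\to\CG$. For $k_\lambda e_i k_\lambda^{-1}=q_i^{(\lambda,\alpha_i^\vee)}e_i$, applying $\theta$ turns the eigenvalue into $(-q)_i^{(\lambda,\alpha_i^\vee)}=(-1)^{d_i(\lambda,\alpha_i^\vee)}q_i^{(\lambda,\alpha_i^\vee)}$, and on the $\widetilde U$ side $(k_\lambda\delta_\lambda)(e_i\varphi_i)(k_\lambda\delta_\lambda)^{-1}$ produces exactly the compensating sign $(-1)^{d_i(\lambda,\alpha_i^\vee)}$ coming from $\delta_\lambda e_i\delta_\lambda^{-1}$, so the two signs match. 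The same computation handles the $f_i$ relation once one notes the twist was built into $\psi_i=(-1)^{d_i}\varphi_i\delta_{\alpha_i}$ precisely so that the $(-q_i)^{-1}-(-q_i)=-(q_i^{-1}-q_i)$ sign in the denominator of $e_if_j-f_je_i=\delta_{ij}(k_i-k_i^{-1})/(q_i-q_i^{-1})$ is absorbed; here one must check that $e_i\varphi_i$ and $f_j\psi_j$ satisfy $(e_i\varphi_i)(f_j\psi_j)-(f_j\psi_j)(e_i\varphi_i)=\delta_{ij}(k_i\delta_{2\alpha_i}^{?}-\cdots)$ reduces, using $\varphi_i\psi_i=(-1)^{d_i}\delta_{\alpha_i}^2\cdot(\text{stuff})$ and the fact that $\delta_{\alpha_i}$ acts trivially on $k_i$, to the $\theta$-twisted right-hand side. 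The quantum Serre relations are the most delicate: applying $\theta$ replaces each $q_i$ inside $e_i^{(n)}$ and inside the alternating sum by $-q_i$, and on the $\widetilde U$ side conjugating the monomial $e_i^{(1-a_{ij}-n)}e_j e_i^{(n)}$ past the accumulated group-like element $\varphi_i^{1-a_{ij}}\varphi_j$ contributes a sign depending on $n$ through $(-1)^{d_i(\alpha_i^\vee,\alpha_j)\cdot(\text{something linear in }n)}$; one must show these two sources of signs agree term by term, which is exactly where the defining condition on $J$ — that $d_i(\alpha_i^\vee,\alpha_j)\notin2\BZ$ forces $|\{i,j\}\cap J|=1$ — is used to control the parity of the sign picked up from $\varphi_i,\varphi_j$.

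After the homomorphism property is established, injectivity follows formally: $\Phi$ respects the triangular decomposition in the sense that it sends $U^+\otimes U^0\otimes U^-$ into $U^+\BF[\CG]\otimes U^0\otimes U^-$ compatibly with the PBW/triangular isomorphisms of $\widetilde U=U\otimes\BF[\CG]$, and since $\Phi$ restricted to ${}^\theta U^+$ is the map $e_i\mapsto e_i\varphi_i$ — which is visibly injective because the $\varphi_i$ are invertible and the monomials $e_{\beta_N}^{m_N}\cdots e_{\beta_1}^{m_1}$ form a basis — and likewise on ${}^\theta U^-$ and ${}^\theta U^0$, a kernel element would have to vanish in each tensor factor, hence be zero. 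Alternatively one composes $\Phi$ with the algebra map $\widetilde U\to U$ (at a suitable specialization, or using the $\CG$-grading) to recover the identity on ${}^\theta U$ up to the twist, forcing $\Phi$ to be injective.

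I expect the main obstacle to be the quantum Serre relations: one has to set up clean notation for how a group-like factor $\varphi_i^a\varphi_j$ conjugates a PBW-type monomial in $e_i,e_j$ and then match, as polynomial identities in $\BF$, the sign $(-1)^{d_i(\alpha_i^\vee,\alpha_j)(1-a_{ij}-2n)}$-type contributions against the substitution $q_i\mapsto -q_i$ inside $[m]_{q_i}!$ and inside the $q_i^{-r}$ coefficients of Lusztig's formulas — the choice of $J$ is exactly the device that makes the residual signs land consistently, and verifying this in both the $i,j$-both-outside-$J$ (even) case and the exactly-one-in-$J$ (odd) case is the real content. The torus and $ef$-commutator relations, by contrast, are short sign checks, and injectivity is essentially bookkeeping once the triangular compatibility is noted.
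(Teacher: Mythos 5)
Your plan is sound, and it is worth noting that the paper itself offers no argument for this statement: Proposition \ref{prop:kashiwara1} is simply quoted from \cite{KKO}. So a self-contained generators-and-relations verification of the kind you outline is a genuinely different (more elementary) route than the paper's, which defers to the reference. Your reduction is the right one: an $\BF$-algebra map ${}^\theta U\to\widetilde U$ amounts to checking that $k_\lambda\delta_\lambda$, $e_i\varphi_i$, $f_i\psi_i$ satisfy the relations of $U$ with every coefficient $c\in\BF$ replaced by $\theta(c)$; the sign bookkeeping does close up, since $[n]_{q_i}!$ acquires $(-1)^{d_in(n-1)/2}$ under $q\mapsto-q$ while pushing the group-like factors to one side of a monomial contributes signs $(-1)^{d_ia_{ij}\cdot(\text{linear in }n)}$, and these match exactly under the hypothesis on $J$. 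Injectivity via the PBW basis and the decomposition $\widetilde U=\bigoplus_{\delta\in\CG}U\delta$ is fine (this is essentially what Lemma \ref{lem:kashiwara0}(i) records).

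Two corrections to the details. First, the condition on $J$ is not used only in the quantum Serre relations: it is already needed for $e_if_j=f_je_i$ with $i\ne j$. Indeed $(e_i\varphi_i)(f_j\psi_j)-(f_j\psi_j)(e_i\varphi_i)$ is $e_if_j\varphi_i\psi_j$ times a difference of two signs, and it vanishes precisely when $d_ia_{ij}$ is even or exactly one of $i,j$ lies in $J$ — the same parity condition you isolate for the Serre relations. For $i=j$, note $\varphi_i\psi_i=(-1)^{d_i}\delta_{\alpha_i}$ (your ``$\delta_{\alpha_i}^2\cdot(\text{stuff})$'' is off), and this single $\delta_{\alpha_i}$ is what matches the images $k_i^{\pm1}\delta_{\alpha_i}$ of ${}^\theta k_i^{\pm1}$, while the scalar $(-1)^{d_i}$ absorbs $\theta\bigl(1/(q_i-q_i^{-1})\bigr)$. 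Second, in your alternative injectivity argument, the projection $\widetilde U\to U$, $u\delta\mapsto u$, is not an algebra homomorphism (it only becomes one after restriction to $\CG$-invariants, cf.\ Proposition \ref{prop:Xi}); use it merely as the $\BF$-linear projection along the $\CG$-grading, which suffices because it sends the image of a PBW basis of ${}^\theta U$ bijectively to signed PBW basis elements of $U$. Finally, the Serre relations contain no coefficients $q_i^{-r}$ (those occur in Lusztig's braid-operator formulas, relevant to Proposition \ref{prop:kashiwara2}, not here); the only $q$-dependence to twist is in the divided powers.
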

\begin{remark}
{\rm
In \cite{KKO} Kashiwara-Kang-Oh established using Proposition \ref{prop:kashiwara1} 
the equivalence $\Mod(U)\cong\Mod({}^\theta U)$, where 
$\Mod(U)$ (resp.\ $\Mod({}^\theta U)$) denotes the category of $U$-modules (resp.\ ${}^\theta U$-modules) with weight space decompositions (see also Andersen \cite{Andersen}).
}
\end{remark}
We will identify ${}^\theta U$ with a subalgebra of $\widetilde{U}$.
We can easily check the following.
\begin{lemma}
\label{lem:kashiwara0}
\begin{itemize}
\item[(i)]
The multiplication of $\widetilde{U}$ gives an isomorphism 
${}^\theta U\otimes \BF[\CG]\cong\widetilde{U}$ of $\BF$-modules.
\item[(ii)]
For any $\delta\in\CG$ and ${}^\theta u\in{}^\theta U$ we have 
$
\delta \,{}^\theta u \delta^{-1}={}^\theta(\delta(u))
$.
\end{itemize}
\end{lemma}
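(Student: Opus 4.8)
The plan is to handle the two assertions separately: (ii) is purely formal, and (i) rests on a single weight-space computation together with the triangular decomposition.

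For (ii) I would observe that both assignments $u\mapsto\delta\,({}^\theta u)\,\delta^{-1}$ and $u\mapsto{}^\theta(\delta(u))$ (where we regard ${}^\theta U$ as a subalgebra of $\widetilde U$ via the embedding $\iota\colon{}^\theta U\hookrightarrow\widetilde U$ of Proposition~\ref{prop:kashiwara1}) are ring homomorphisms $U\to\widetilde U$ that are semilinear over $\theta$: the first is the composite of the ring isomorphism $u\mapsto{}^\theta u$, the embedding $\iota$, and conjugation by $\delta$ in $\widetilde U$; the second is the composite of the $\CG$-action $\delta\colon U\to U$ (an algebra automorphism), that ring isomorphism, and $\iota$. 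Hence it suffices to check that they agree on the generators $k_\lambda, e_i, f_i$, which is immediate from Proposition~\ref{prop:kashiwara1}: writing $\delta=\delta_\mu$, on $e_i$ both sides equal $(-1)^{d_i(\mu,\alpha_i^\vee)}e_i\varphi_i$ — on the one side because $\delta_\mu e_i\delta_\mu^{-1}=\delta_\mu(e_i)$ and $\delta_\mu$ commutes with $\varphi_i$, on the other because $\theta$ fixes $\pm1$ — and similarly for $f_i$ (with $\psi_i$) and $k_\lambda$ (using $\delta_\mu(k_\lambda)=k_\lambda$).

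For (i), the one non-formal step I would establish first is that $\iota$ maps the weight space ${}^\theta U^+_\gamma$ ($\gamma\in Q^+$) \emph{onto} $U^+_\gamma\varphi_\gamma$, and likewise ${}^\theta U^-_{-\gamma}$ onto $U^-_{-\gamma}\psi_\gamma$ and ${}^\theta U^0$ onto $\bigoplus_{\lambda\in P}\BF k_\lambda\delta_\lambda$. The inclusion $\iota({}^\theta U^+_\gamma)\subseteq U^+_\gamma\varphi_\gamma$ is clear, since ${}^\theta e_i\mapsto e_i\varphi_i$, $\CG$ is abelian, and $\prod_i\varphi_i^{m_i}=\varphi_\gamma$ when $\gamma=\sum_i m_i\alpha_i$; equality then follows by a dimension count, because $U^+_\gamma$ is finite-dimensional (spanned by PBW monomials of weight $\gamma$), $u\mapsto{}^\theta u$ is an $\BF$-semilinear bijection, and $\iota$ is an injective $\BF$-linear map, so $\dim_\BF\iota({}^\theta U^+_\gamma)=\dim_\BF U^+_\gamma=\dim_\BF(U^+_\gamma\varphi_\gamma)$. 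Feeding this into the triangular decompositions $U=\bigoplus_{\gamma,\gamma'\in Q^+,\,\lambda\in P}U^-_{-\gamma}k_\lambda U^+_{\gamma'}$ and ${}^\theta U={}^\theta U^-\cdot{}^\theta U^0\cdot{}^\theta U^+$, and pushing each group element to the right inside $\widetilde U$ (which only introduces a sign, since $\CG$ acts trivially on $U^0$ and by $\pm1$ on each weight space of $U^\pm$), yields
\[
\iota({}^\theta U)=\bigoplus_{\gamma,\gamma'\in Q^+,\,\lambda\in P}\bigl(U^-_{-\gamma}k_\lambda U^+_{\gamma'}\bigr)\bigl(\psi_\gamma\delta_\lambda\varphi_{\gamma'}\bigr).
\]

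From this, (i) is immediate: the multiplication map $m\colon{}^\theta U\otimes_\BF\BF[\CG]\to\widetilde U$ sends ${}^\theta\bigl(U^-_{-\gamma}k_\lambda U^+_{\gamma'}\bigr)\otimes\BF g$ bijectively onto $\bigl(U^-_{-\gamma}k_\lambda U^+_{\gamma'}\bigr)(\psi_\gamma\delta_\lambda\varphi_{\gamma'}g)$ (same dimension count), and since $\widetilde U=\bigoplus_{\gamma,\gamma'\in Q^+,\,\lambda\in P,\,h\in\CG}\bigl(U^-_{-\gamma}k_\lambda U^+_{\gamma'}\bigr)h$ while $(\gamma,\gamma',\lambda,g)\mapsto(\gamma,\gamma',\lambda,\psi_\gamma\delta_\lambda\varphi_{\gamma'}g)$ is a bijection of the index set, $m$ carries a direct-sum decomposition of its source bijectively onto one of $\widetilde U$. (Alternatively, once (ii) is available one gets surjectivity of $m$ for free, since then ${}^\theta U\cdot\BF[\CG]$ is a subalgebra of $\widetilde U$ containing $e_i=({}^\theta e_i)\varphi_i^{-1}$, $f_i$, $k_\lambda$ and all of $\CG$; but injectivity still needs the weight-space input.) I expect the only real effort to be the bookkeeping in the last two steps — tracking the twists $\varphi_\gamma,\psi_\gamma,\delta_\lambda$ attached to weight spaces and verifying that the displayed reindexing is a bijection; the one genuinely substantive point is the surjectivity of $\iota$ on weight spaces, which uses nothing beyond their finite-dimensionality and the explicit generators of Proposition~\ref{prop:kashiwara1}.
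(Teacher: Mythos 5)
Your proof is correct: checking (ii) on the generators $k_\lambda,e_i,f_i$ via Proposition \ref{prop:kashiwara1} and deducing (i) from the triangular decomposition together with the observation that $\iota$ carries each weight space ${}^\theta U^\pm_{\pm\gamma}$, ${}^\theta U^0$ onto the corresponding space twisted by an invertible element of $\BF[\CG]$ is exactly the routine verification the paper leaves to the reader (``We can easily check the following''). No gaps; the only caveat is cosmetic — the signs absorbed into $\psi_\gamma$ and into the reordering are irrelevant since you only compare spans.
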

\begin{proposition}
\label{prop:kashiwara2}
For any $\beta\in\Delta^+$ we have 
\[
{}^\theta e_\beta=\pm e_\beta\varphi_\beta,\qquad
{}^\theta(Sf_\beta)=\pm (Sf_\beta)\varphi_\beta.
\]
\end{proposition}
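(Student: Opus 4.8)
The plan is to prove Proposition \ref{prop:kashiwara2} by induction on the length of $w_0$, using the fact that $e_\beta$ and $f_\beta$ are constructed from $e_{i_j}$ and $f_{i_j}$ by applying Lusztig's braid operators $T_{i_1}\cdots T_{i_{j-1}}$. The base case — that ${}^\theta e_i = e_i\varphi_i$ (up to sign) and ${}^\theta(Sf_i) = {}^\theta(-f_ik_i) = \pm(Sf_i)\varphi_i$ — follows immediately from Proposition \ref{prop:kashiwara1}: we have ${}^\theta(Sf_i) = -({}^\theta f_i)({}^\theta k_i) = -f_i\psi_i k_i \delta_i = -f_i k_i (-1)^{d_i}\varphi_i\delta_i^2 = \pm (Sf_i)\varphi_i$, using that $\delta_i^2 = 1$ in the $2$-elementary group $\CG$ and that $\psi_i$, $\varphi_i$, $\delta_\lambda$ all commute in $\BF[\CG]$.

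The inductive step requires understanding how the identification ${}^\theta U \hookrightarrow \widetilde U$ interacts with the braid operators. The key observation is that the braid group action on $U$ induces, via $\theta$, a braid group action on ${}^\theta U$, and under the embedding into $\widetilde U$ this should correspond (up to the $\CG$-twist) to an extension of Lusztig's $T_i$ to $\widetilde U$. Concretely, I would first establish that $T_i$ extends to an automorphism $\widetilde T_i$ of $\widetilde U$ acting on $\BF[\CG]$ via the natural $W$-action on $P/P_0$ (note $P_0$ is $W$-stable since $d_i(\lambda,\alpha_i^\vee)$ transforms appropriately), and then verify on the generators $k_\lambda\delta_\lambda$, $e_i\varphi_i$, $f_i\psi_i$ that $\widetilde T_i$ restricted to the subalgebra ${}^\theta U$ agrees with ${}^\theta T_i$ up to multiplication by elements of $\CG$. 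Since each $T_i(e_j)$ for $i\ne j$ is an $\BF$-linear combination of monomials $e_i^{(a)}e_je_i^{(b)}$ with $a+b = -a_{ij}$, all lying in the single weight space of weight $s_i(\alpha_j) \in Q^+$, the group-like correction factor $\varphi_\gamma$ is the same for every monomial — it depends only on the $Q$-grading — so the relation is preserved term by term, with only an overall sign ambiguity coming from $\theta$ acting on the coefficients $q_i^{-r}$. An induction then propagates ${}^\theta e_\beta = \pm e_\beta\varphi_\beta$ along the reduced word, and the $Sf_\beta$ statement follows in parallel, noting that $Sf_\beta = T_{i_1}\cdots T_{i_{j-1}}(Sf_{i_j})$ since $S$ and the $T_i$ commute up to the standard adjustments, or alternatively by applying the antipode directly.

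The main obstacle I anticipate is bookkeeping the precise form of the $\CG$-correction as it passes through the braid operators: one must check that $\varphi_\gamma$ genuinely depends only on $\gamma \bmod$ (relations in $\CG$) and not on the particular expression of $\gamma$ as a nonnegative combination of simple roots, and that the subset $J$ chosen in the definition of $\varphi_i$ does not introduce inconsistencies when $T_i$ mixes the simple-root contributions. Here Lemma \ref{lem:kashiwara0}(ii) is the crucial tool — it says conjugation by $\delta \in \CG$ acts on ${}^\theta U$ exactly as the $\CG$-action on $U$ transported by $\theta$ — which lets me move the group-like factors past the $U$-part in a controlled way. A secondary technical point is tracking signs: since ${}^\theta T_i$ and $T_i$ differ by $\theta$ on scalars, and $\theta(q_i^{-r}) = (-1)^r q_i^{-r}$, these signs combine with the $(-1)^{-a_{ij}-r}$ already present in the formula for $T_i(e_j)$; but because all of this is absorbed into the ambiguous $\pm$ in the statement, I expect it to cause no real difficulty, only requiring care in the exposition. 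Once the weight-space argument is set up cleanly, the proposition reduces to an essentially formal induction.
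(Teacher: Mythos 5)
Your strategy coincides with the paper's: define ${}^\theta T_i$ by ${}^\theta T_i({}^\theta u)={}^\theta(T_i(u))$, propagate the identity along the reduced word for $w_0$, and reduce everything to the behaviour on generators, i.e.\ to the rank-two identity ${}^\theta T_i(e_j\varphi_j)=c_{i,j}\,T_i(e_j)\varphi_{s_i\alpha_j}$ (this is \eqref{eq:p2} in the paper, and \eqref{eq:p1} is your ``$\varphi_\gamma$ depends only on the $Q$-grading'' step). The genuine gap is your claim that this rank-two identity is essentially formal, ``preserved term by term, with only an overall sign ambiguity coming from $\theta$ acting on the coefficients $q_i^{-r}$''. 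Two of the sign sources are not overall signs. First, $\theta(q_i^{-r})=(-1)^{d_ir}q_i^{-r}$ varies with $r$, as does $\theta$ applied to the factorials $[n]_{q_i}!$ hidden in the divided powers. Second, writing the image of ${}^\theta\bigl(e_i^{(a)}e_je_i^{(b)}\bigr)$ in $\widetilde U$, which up to a scalar is $(e_i\varphi_i)^{a}(e_j\varphi_j)(e_i\varphi_i)^{b}$, in the normal form $\pm\,e_i^{(a)}e_je_i^{(b)}\varphi_{(a+b)\alpha_i+\alpha_j}$ forces you to commute group-like elements past powers of $e_i$ and $e_j$; when $d_i(\alpha_i^\vee,\alpha_j)$ is odd, exactly one of $\varphi_i,\varphi_j$ is the nontrivial element $\delta_{\alpha_i}$ resp.\ $\delta_{\alpha_j}$, and the resulting sign is $(-1)^{a}$ or $(-1)^{b}$ --- it depends on the individual exponents, not only on the total weight $a+b=-a_{ij}$. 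So within the single weight space the different monomials acquire genuinely different term-wise signs, and the content of the proposition at this point is precisely that these signs, combined with the $(-1)^{-a_{ij}-r}$ in Lusztig's formula and the $\theta$-twists above, conspire to one common $\pm$. That is exactly the case-by-case verification (according to $a_{ij}$, $d_i$, and which of $i,j$ lies in $J$) that the paper isolates and carries out (with details omitted); your write-up would have to perform it rather than deduce it from the weight-space observation.

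The secondary reduction, from generators to arbitrary weight vectors and PBW monomials, is fine in outline, but note that it too is not sign-free: the paper records the multiplicativity of the constants as $c_{i,\gamma}=\bigl(\prod_{p}c_{ij_p}\bigr)\bigl(\prod_{p<p'}(-1)^{d_i(\alpha_i^\vee,\alpha_{j_p})(\alpha_i^\vee,\alpha_{j_{p'}})}\bigr)$, and Lemma \ref{lem:kashiwara0}(ii) is indeed the tool that makes this bookkeeping work, as you anticipate. For the second half of the statement, rather than invoking a commutation of $S$ with the $T_i$ ``up to standard adjustments'', it is cleaner (and is what the paper does) to run the parallel induction directly for $f_\beta$.
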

\begin{proof}
For $i\in I$ define ${}^\theta T_i:{}^\theta U\to{}^\theta U$ by
${}^\theta T_i({}^\theta u)={}^\theta(T_i(u))\;(u\in U)$.
For $\gamma\in Q^+$ set
\[
U^+_\gamma=\{x\in U^+\mid k_ixk_i^{-1}=q_i^{(\alpha_i^\vee,\gamma)}x\;(i\in I)\}.
\]
For $i\in I$ we set 
\[
\varpi_i=
\begin{cases}
-1\qquad&(i\in J),
\\
1\qquad&(i\notin J).
\end{cases}
\]
In order to show the statement for $e_\beta$, it is sufficient to show that for $\gamma\in Q^+$ and $i\in I$ there exists $c_{i,\gamma}\in\{\pm1\}$ satisfying
\begin{equation}
\label{eq:p1}
{}^\theta T_i(x\varphi_\gamma)=c_{i,\gamma}T_i(x)\varphi_{s_i\gamma}
\qquad(x\in U^+_\gamma).
\end{equation}
We first note that for $i, j\in I$ we have
\begin{equation}
\label{eq:p2}
{}^\theta T_i(e_j\varphi_j)=c_{i,j}T_i(e_j)\varphi_{s_i\alpha_j},
\end{equation}
where
\[
c_{ij}=
(-1)^{d_ia_{ij}(a_{ij}+1)/2}
\varpi_i^{d_ia_{ij}}\in\{\pm1\}.
\]
The verification of \eqref{eq:p2} in the case $i=j$ is easy.
In the case $i\ne j$ one needs some case by case calculation according to the relative position of $\alpha_i$ and $\alpha_j$.
Details are omitted.
Now let us show \eqref{eq:p1} using \eqref{eq:p2}.
For $j_1,\dots, j_r\in I$ with $\sum_p\alpha_{j_p}=\gamma$ we have
\[
(e_{j_1}\varphi_{j_1})\cdots(e_{j_r}\varphi_{j_r})
=
A
e_{j_1}\cdots e_{j_r}\varphi_{\gamma}
\]
where $A=\prod_{p=1}^{r-1}\varpi_{j_p}^{d_{j_p}(\alpha^\vee_{j_p},\alpha_{j_{p+1}}+\cdots+\alpha_{j_r})}$.
Hence we have
\begin{align*}
{}^\theta{T}_i((e_{j_1}\cdots e_{j_r})\varphi_{\gamma})
=&
A
{}^\theta{T}_i((e_{j_1}\varphi_{j_1})\cdots(e_{j_r}\varphi_{j_r}))\\
=&
A
\left(\prod_pc_{ij_p}\right)
((T_ie_{j_1})\varphi_{\alpha_{j_1}}\varphi_{\alpha_i}^{-a_{ij_1}})
\cdots
((T_ie_{j_r})\varphi_{\alpha_{j_r}}\varphi_{\alpha_i}^{-a_{ij_r}})
\\
=&
AA'
\left(\prod_pc_{ij_p}\right)
T_i(e_{j_1}\cdots e_{j_r})\varphi_{s_i\gamma},
\end{align*}
with
\begin{align*}
A'=&
\left(
\prod_{p=1}^{r-1}
\varpi_{j_p}^{d_{j_p}(\alpha^\vee_{j_p},s_i(\alpha_{j_{p+1}}+\cdots+\alpha_{j_r}))}
\right)
\left(
\prod_{p=1}^{r-1}
\varpi_{i}^{-d_ia_{ij_p}(\alpha^\vee_{i},s_i(\alpha_{j_{p+1}}+\cdots+\alpha_{j_r}))}
\right)
\\
=&
A
\left(
\prod_{p=1}^{r-1}
\varpi_{j_p}^{d_{j_p}(\alpha^\vee_{j_p},\alpha_i)(\alpha_i^\vee,\alpha_{j_{p+1}}+\cdots+\alpha_{j_r})}
\right)
\left(
\prod_{p=1}^{r-1}
\varpi_{i}^{d_{j_p}(\alpha^\vee_{j_p},\alpha_i)(\alpha^\vee_{i},\alpha_{j_{p+1}}+\cdots+\alpha_{j_r})}
\right)
\\
=&
A
\prod_{p=1}^{r-1}
(\varpi_i\varpi_{j_p})^{d_{j_p}(\alpha^\vee_{j_p},\alpha_i)(\alpha_i^\vee,\alpha_{j_{p+1}}+\cdots+\alpha_{j_r})}
\\
=&
A
\prod_{p=1}^{r-1}
(-1)^{d_i(\alpha_i^\vee,\alpha_{j_p})(\alpha_i^\vee,\alpha_{j_{p+1}}+\cdots+\alpha_{j_r})}
\\
=&
A
\prod_{1\leqq p<p'\leqq r}
(-1)^{d_i(\alpha_i^\vee,\alpha_{j_p})(\alpha_i^\vee,\alpha_{j_{p'}})}
.
\end{align*}
We thus obtain \eqref{eq:p1}, where
\[
c_{i,\gamma}
=\left(\prod_{p}c_{ij_p}\right)
\left(\prod_{p<p'}(-1)^{d_i(\alpha_i^\vee,\alpha_{j_p})(\alpha_i^\vee,\alpha_{j_{p'}})}\right).
\]
The proof of the assertion for $f_\beta$ is similar.
\end{proof}
\subsection{}
Set $\CH=\CH(\Delta)=Q^\vee/2Q^\vee$.
For $\nu\in Q^\vee$ we denote by $\gamma_\nu$ the element of $\CH$ represented by $\nu$.
Define an action of $\CH$ on
the  $\BF$-algebra $U^0=\bigoplus_{\lambda\in P}\BF k_\lambda\cong\BF[P]$  by
\[
{\gamma}_{\nu}\cdot k_\lambda
=(-1)^{(\nu,\lambda)}
k_\lambda
\qquad
(\nu\in Q^\vee,\; \lambda\in P).
\]
We can extend this $\CH$-action on $U^0$ to
that on the algebra
$U\cong 
U^+\otimes
SU^-
\otimes
U^0$
by
\[
\gamma\cdot (ut)=u(\gamma\cdot t)
\qquad(\gamma\in\CH, u\in U^+(SU^-), t\in U^0).
\]
Since this action commutes with that of $\CG$, we get an action of 
$\CG\times \CH$ on $U$.

\subsection{}
Set $\BA=\BQ[q^{\pm1}]$.
Following De Concini-Procesi \cite{DP} we define $U_\BA$ to be the smallest $\BA$-subalgebra of $U$ that contains $k_\lambda\;(\lambda\in P),\; (q_i-q_i^{-1})e_i, \; (q_i-q_i^{-1})f_i\;(i\in I)$ and is stable under the action of $T_i\;(i\in I)$.
It is a Hopf algebra over $\BA$.
Set 
\[
U_\BA^0=U_\BA\cap U^0,\quad
U_\BA^\pm=U_\BA\cap U^\pm,\quad
U_\BA^{\geqq0}=U_\BA\cap U^{\geqq0},\quad
U_\BA^{\leqq0}=U_\BA\cap U^{\leqq0}.
\]
Then we have $U_\BA^0=\bigoplus_{\lambda\in P}\BA k_\lambda$, and the multiplication of $U_\BA$ induces isomorphisms
\begin{align*}
&
U_\BA^+\otimes U_\BA^0\otimes U_\BA^-\cong U_\BA^-\otimes U_\BA^0\otimes U_\BA^+\cong U_\BA,\\
&
U_\BA^+\otimes U_\BA^0\cong U_\BA^0\otimes U_\BA^+\cong U_\BA^{\geqq0},\qquad
U_\BA^-\otimes U_\BA^0\cong U_\BA^0\otimes U_\BA^-\cong U_\BA^{\leqq0}
\end{align*}
of $\BA$-modules.
For $\beta\in \Delta^+$ we define $a_\beta\in U_\BA^+,\;b_\beta\in U_\BA^-$ by
\[
a_{\beta}=(q_{\beta}-q_{\beta}^{-1})e_{\beta},\qquad
b_{\beta}=(q_{\beta}-q_{\beta}^{-1})f_{\beta}.
\]
Then 
$\{
a_{\beta_N}^{m_N}\cdots a_{\beta_1}^{m_1}
\mid m_j\in\BZ_{\geqq0}\}$ 
(resp.\
$\{
b_{\beta_N}^{m_N}\cdots b_{\beta_1}^{m_1}
\mid m_j\in\BZ_{\geqq0}\}$
)
is a free $\BA$-basis of $U_\BA^+$
(resp.\ $U_\BA^-$).

Set
\[
U_{\ad,\BA}=U_\BA\cap U_\ad,\qquad
U^\flat_{\ad,\BA}=U_\BA\cap U^\flat_\ad\quad(\flat=0,\geqq0, \leqq0).
\]
Then we have $U_{\ad,\BA}^0=\bigoplus_{\lambda\in Q}\BA k_\lambda$, and
\begin{align*}
&
U_\BA^+\otimes U_{\ad,\BA}^0\otimes U_\BA^-\cong U_\BA^-\otimes U_{\ad,\BA}^0\otimes U_\BA^+\cong U_{\ad,\BA},\\
&
U_\BA^+\otimes U_{\ad,\BA}^0\cong U_{\ad,\BA}^0\otimes U_\BA^+\cong U_{\ad,\BA}^{\geqq0},\qquad
U_\BA^-\otimes U_{\ad,\BA}^0\cong U_{\ad,\BA}^0\otimes U_\BA^-\cong U_{\ad,\BA}^{\leqq0}.
\end{align*}

Denote by $U_{\ad,\BA}^{L}$ the $\BA$-subalgebra of $U$ generated by
the elements $\{e_i^{(n)}, f_i^{(n)}, k_\lambda\mid i\in I, n\in\BZ_{\geqq0}, \lambda\in Q\}$, and set
\[
U_{\ad,\BA}^{L,\flat}=U_{\ad,\BA}^{L}\cap U_\ad^\flat\quad
(\flat=0,\geqq0, \leqq0),\qquad
U_{\BA}^{L,\pm}=U_{\ad,\BA}^{L}\cap U^\pm.
\]
Then we have
\begin{align*}
&
U_\BA^{L,+}\otimes U_{\ad,\BA}^{L,0}\otimes U_\BA^{L,-}\cong U_\BA^{L,-}\otimes U_{\ad,\BA}^{L,0}\otimes U_\BA^{L,+}\cong U^L_{\ad,\BA},\\
&
U_\BA^{L,+}\otimes U_{\ad,\BA}^{L,0}\cong U_{\ad,\BA}^{L,0}\otimes U_\BA^{L,+}\cong U_{\ad,\BA}^{L,\geqq0},\\
&
U_\BA^{L,-}\otimes U_{\ad,\BA}^{L,0}\cong U_{\ad,\BA}^{L,0}\otimes U_\BA^{L,-}\cong U_{\ad,\BA}^{L,\leqq0}.
\end{align*}
Moreover, $U_{\ad,\BA}^{L,0}$ is generated by the elements of the form
$k_\lambda\; (\lambda\in Q)$, 
\[
\begin{bmatrix}
k_i\\m
\end{bmatrix}
=\prod_{s=0}^{m-1}
\frac{q_i^{-s}k_i-q_i^sk_i^{-1}}{q_i^{s+1}-q_i^{-s-1}}
\qquad(i\in I, m\geqq0),
\]
and
$\{
e_{\beta_N}^{(m_N)}\cdots e_{\beta_1}^{(m_1)}
\mid m_j\in\BZ_{\geqq0}\}$ 
(resp.\
$\{
f_{\beta_N}^{(m_N)}\cdots f_{\beta_1}^{(m_1)}
\mid m_j\in\BZ_{\geqq0}\}$
)
is a free $\BA$-basis of $U_\BA^{L,+}$
(resp.\ $U_\BA^{L,-}$).

We define $\dot{U}_\BA$ to be the $\BA$-subalgebra of $\dot{U}$ consisting of elements of the form
\[
\sum_{\lambda\in P}u_\lambda1_\lambda u'_\lambda
\qquad
(u_\lambda\in U_\BA^{L,-}, \;u'_\lambda\in U_\BA^{L,+}).
\]
For $\lambda\in P^+$ we define an $\BA$-form $L_\BA(\lambda)$ of $L(\lambda)$ by
\[
L_\BA(\lambda)=\dot{U}_\BA v
\qquad(1_\lambda L(\lambda)=\BF v).
\]

We define $\dot{V}_\BA$ to be the $\BA$-subalgebra of $\dot{V}$ consisting of elements of the form
\[
\sum_{\lambda\in P}v_\lambda v'_\lambda1_\lambda
\qquad
(v_\lambda\in (\jmath^-)^{-1}(U_\BA^{L,-}), \;v'_\lambda\in (\jmath^+)^{-1}(U_\BA^{L,+})).
\]
We set
\[
{}^eU_\BA={}^eU\cap U_\BA,\qquad
{}^fU_\BA={}^fU\cap U_\BA.
\]

By \cite{KR}, \cite{KT}, \cite{LS}
we have
\begin{align}
\label{eq:Drinfeld-value}
&\tau(
e_{\beta_N}^{(m_N)}\cdots e_{\beta_1}^{(m_1)},
b_{\beta_N}^{n_N}\cdots b_{\beta_1}^{n_1})
=
\tau(
a_{\beta_N}^{m_N}\cdots a_{\beta_1}^{m_1},
f_{\beta_N}^{(n_N)}\cdots f_{\beta_1}^{(n_1)})\\
\nonumber
=&
\prod_{s=1}^N
\delta_{m_s,n_s}(-1)^{m_s}q_{\beta_s}^{m_s(m_s-1)/2},
\end{align}
and hence $\tau$ induces bilinear forms
\[
\tau^{\emptyset,L}_\BA:
U_{\ad,\BA}^{\geqq0}\times U^{L,\leqq0}_{\ad,\BA}\to\BA,\qquad
\tau^{L,\emptyset}_\BA:
U_{\ad,\BA}^{L,\geqq0}\times U^{\leqq0}_{\ad,\BA}\to\BA.
\]
It follows that $\sigma$ and $\omega$ also induce perfect bilinear forms
\begin{align*}
\sigma_\BA:U_\BA\times \dot{V}_\BA\to\BA,\qquad
\omega_\BA:{}^eU_\BA\times \dot{U}_\BA\to\BA.
\end{align*}

Set $\widetilde{U}_\BA=U_\BA\otimes\BA[\CG]$.
It is an $\BA$-subalgebra of $\widetilde{U}$.
We also have an obvious $\BA$-form ${}^\theta U_\BA$ of ${}^\theta U$.
By Proposition \ref{prop:kashiwara2} the embedding
${}^\theta U\subset\widetilde{U}$
induces
${}^\theta U_\BA\to \widetilde{U}_\BA$.

\subsection{}
Let $z\in\BC^\times$, and set
\begin{equation}
\label{eq:kappa-beta}
z_\beta=z^{d_\beta}\quad(\beta\in\Delta),\qquad
z_i=z_{\alpha_i}\quad(i\in I).
\end{equation}
Set
\[
U_z=U_z(\Delta)=\BC\otimes_\BA U_\BA,
\]
where $\BA\to\BC$ is given by $q\mapsto z$.
We also set
\begin{align*}
&U^\flat_z=\BC\otimes_\BA U^\flat_\BA\qquad
(\flat=\emptyset, +, -, 0, \geqq0, \leqq0),\\
&U_{\ad,z}^{\flat}=\BC\otimes_\BA U_{\ad,\BA}^\flat,\quad
U_{\ad,z}^{L,\flat}=\BC\otimes_\BA U_{\ad,\BA}^\flat\qquad
(\flat=\emptyset, 0, \geqq0, \leqq0),\\
&U^{L,\pm}_z=\BC\otimes_\BA U^{L,\pm}_\BA,
\\
&
\dot{U}_z=\BC\otimes_\BA \dot{U}_\BA, \qquad
\dot{V}_z=\BC\otimes_\BA \dot{V}_\BA,\\
&{}^eU_z=\BC\otimes_\BA{}^eU_\BA,\qquad
{}^fU_z=\BC\otimes_\BA{}^fU_\BA.
\end{align*}
Then we have
\[
U_z\cong U_z^-\otimes U_z^0\otimes U_z^+,\qquad
\dot{U}_z\cong \bigoplus_{\lambda\in P}U_z^{L,-}1_\lambda U_z^{L,+}.
\]
Since $U_\BA$ is a free $\BA$-module, we have
$
{}^fU_z\subset{}^eU_z\subset U_z.
$

We denote by $\Mod(\dot{U}_z)$ the category of 
finite-dimensional $\dot{U}_z$-modules $M$ with weight space decomposition
$
M=\bigoplus_{\lambda\in P}1_\lambda M$.
For $\lambda\in P^+$ we define $L_z(\lambda)\in\Mod(\dot{U}_z)$ by
\[
L_z(\lambda)=\BC\otimes_\BA L_\BA(\lambda).
\]

Note that $\tau^{\emptyset,L}_\BA$, $\tau^{L,\emptyset}_\BA$, $\sigma_\BA$ and $\omega_\BA$ induce bilinear forms
\begin{align*}
&\tau^{\emptyset,L}_z:
U_{\ad,z}^{\geqq0}\times U^{L,\leqq0}_{\ad,z}\to\BC,\qquad
\tau^{L,\emptyset}_z:
U_{\ad,z}^{L,\geqq0}\times U^{\leqq0}_{\ad,z}\to\BC,
\\
&\sigma_z:U_z\times \dot{V}_z\to\BC,\qquad
\omega_z:{}^eU_z\times \dot{U}_{z}\to\BC.
\end{align*}
By \eqref{eq:Drinfeld-value}
$\tau^{\emptyset,L}_z|_{
U_{z}^{+}\times U^{L,-}_{z}}$, 
$\tau^{L,\emptyset}_z|
{U_{z}^{L,+}\times U^{-}_{z}}$, $\sigma_z$, $\omega_z$ are perfect.

Set $\widetilde{U}_z=\BC\otimes_\BA\widetilde{U}_\BA=U_z\otimes\BC[\CG]$.
Then we have a natural embedding ${U}_z\subset\widetilde{U}_z$, which is compatible with the $\CG$-actions.
Note that the embedding ${}^\theta U_\BA\to \widetilde{U}_\BA$ also induces an embedding $U_{-z}\subset \widetilde{U}_z$, which is compatible with $\CG$-actions.
Hence setting
\begin{align*}
U_{z}^\CG&=
\{u\in U_{z}\mid \delta(u)=u\;(\delta\in\CG)\},\\
\widetilde{U}_z^\CG&=
\{x\in \widetilde{U}_z\mid \delta(x)=x\;(\delta\in\CG)\},
\end{align*}
we obtain embeddings
\[
U_{-z}\subset \widetilde{U}_z\supset U_z,\qquad
U_{-z}^\CG\subset \widetilde{U}_z^\CG\supset U_z^\CG.
\]
We denote by 
$
\widetilde{\Xi}_z:U_{-z}\to U_z
$
the restriction of the linear map $\widetilde{U}_z\to U_z$, 
which sends
$u\delta_\lambda$ for $u\in U_z,\; \lambda\in P$ to $u$.

\begin{proposition}
\label{prop:Xi}
The linear map
$\widetilde{\Xi}_z$ induces an isomorphism
\begin{equation}
\Xi_z:U_{-z}^\CG\to U_{z}^\CG
\end{equation}
of $\BC$-algebras, which is compatible with the $\CH$-actions.
\end{proposition}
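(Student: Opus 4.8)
The proposed strategy rests on recognising $\widetilde{\Xi}_z$ as the map $\mathrm{id}_{U_z}\otimes\epsilon\colon\widetilde{U}_z=U_z\otimes\BC[\CG]\to U_z$, where $\epsilon\colon\BC[\CG]\to\BC$ is the $\BC$-algebra homomorphism sending every $g\in\CG$ to $1$ (equivalently the rule $u\delta_\lambda\mapsto u$, since every element of $\CG$ is some $\delta_\lambda$). First I would fix a PBW-type basis $B$ of $U_z$ adapted to the triangular decomposition $U_z\cong U_z^-\otimes U_z^0\otimes U_z^+$. By Proposition \ref{prop:kashiwara2} (and the parallel statement for $f_\beta$, obtained in the same way, or by working with $Sf_\beta$ in place of $f_\beta$) together with ${}^\theta k_\lambda\mapsto k_\lambda\delta_\lambda$, for every $b\in B$ the element ${}^\theta b\in\widetilde{U}_z$ has the form $\varepsilon_b\,b\,g_b$ with $\varepsilon_b\in\{\pm1\}$ and $g_b\in\CG$; hence $\widetilde{\Xi}_z({}^\theta b)=\varepsilon_b\,b$. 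Since $\{{}^\theta b\mid b\in B\}$ is a $\BC$-basis of $U_{-z}={}^\theta U_z$ (the embedding ${}^\theta U_z\hookrightarrow\widetilde{U}_z$ being injective by Lemma \ref{lem:kashiwara0}(i)), it follows at once that $\widetilde{\Xi}_z\colon U_{-z}\to U_z$ is already a $\BC$-linear isomorphism, carrying the basis $\{{}^\theta b\}$ onto $\pm$ the basis $B$.

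The next step is to match up the $\CG$-invariants. Since $\CG$ fixes $U_z^0$ and acts on each weight space of $U_z^\pm$ by a scalar $\pm1$, it is diagonalised by $B$: each $b\in B$ spans a $\CG$-eigenline, with character $\chi_b\colon\CG\to\{\pm1\}$, and $U_z^\CG$ is the $\BC$-span of $\{b\in B\mid\chi_b=1\}$. By Lemma \ref{lem:kashiwara0}(ii) the conjugation action of $\CG$ on ${}^\theta U_z$ is $\delta\cdot{}^\theta b\cdot\delta^{-1}={}^\theta(\delta(b))=\chi_b(\delta)\,{}^\theta b$, i.e.\ it is diagonal in $\{{}^\theta b\}$ with the \emph{same} characters; hence $U_{-z}^\CG={}^\theta(U_z^\CG)$ is the $\BC$-span of $\{{}^\theta b\mid b\in B,\ \chi_b=1\}$. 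Therefore the restriction of $\widetilde{\Xi}_z$ carries the basis $\{{}^\theta b\mid b\in B,\ \chi_b=1\}$ of $U_{-z}^\CG$ onto $\pm$ the basis $\{b\in B\mid\chi_b=1\}$ of $U_z^\CG$; in particular $\widetilde{\Xi}_z$ restricts to a $\BC$-linear isomorphism $U_{-z}^\CG\xrightarrow{\sim}U_z^\CG$.

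It remains to see that this isomorphism is multiplicative and $\CH$-equivariant. For multiplicativity, note that $\widetilde{U}_z^\CG=\bigoplus_{g\in\CG}U_z^\CG\,g$ (the conjugation action of $\CG$ on $\widetilde{U}_z=\bigoplus_g U_z\,g$ fixes $g$ and acts on $U_z$, exactly as in the identity $\widetilde{U}^\CG=U^\CG\BF[\CG]$), and $U_{-z}^\CG={}^\theta(U_z^\CG)\subseteq\widetilde{U}_z^\CG$. Take $x=\sum_g u_g g$ and $y=\sum_h v_h h$ in $\widetilde{U}_z^\CG$ with $u_g,v_h\in U_z^\CG$. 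Each $v_h$, being $\CG$-invariant, commutes with every element of $\CG$ inside $\widetilde{U}_z$, so $xy=\sum_{g,h}u_g v_h\,gh$, and applying $\widetilde{\Xi}_z$ (which sends each $gh$ to $1$) gives $\widetilde{\Xi}_z(xy)=\bigl(\sum_g u_g\bigr)\bigl(\sum_h v_h\bigr)=\widetilde{\Xi}_z(x)\widetilde{\Xi}_z(y)$. Thus $\widetilde{\Xi}_z|_{\widetilde{U}_z^\CG}$, and with it $\Xi_z$, is a (unital) algebra homomorphism; combined with the previous paragraph, $\Xi_z\colon U_{-z}^\CG\to U_z^\CG$ is an isomorphism of $\BC$-algebras. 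For $\CH$-equivariance, $\CH$ acts on $\widetilde{U}_z=U_z\otimes\BC[\CG]$ through its action on the first factor and trivially on $\BC[\CG]$; one checks on the generators ${}^\theta k_\lambda=k_\lambda\delta_\lambda$, ${}^\theta e_i=e_i\varphi_i$, ${}^\theta f_i=f_i\psi_i$ that this restricts on $U_{-z}={}^\theta U_z$ to the standard $\CH$-action, and since $\widetilde{\Xi}_z=\mathrm{id}_{U_z}\otimes\epsilon$ touches only the $\BC[\CG]$-factor, it commutes with $\CH$; hence $\Xi_z$ is compatible with the $\CH$-actions.

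The one point requiring care is that $\widetilde{\Xi}_z$ is \emph{not} an algebra map on all of $U_{-z}$ — the product on $\widetilde{U}_z$ is a crossed product, not a genuine tensor product, so $\mathrm{id}\otimes\epsilon$ is not multiplicative in general. Multiplicativity is recovered on the $\CG$-invariants precisely because $\CG$ is central in $\widetilde{U}_z^\CG$, and one must invoke the rigid normal form ${}^\theta b=\pm b\,g_b$ of Proposition \ref{prop:kashiwara2} both to see that $\widetilde{\Xi}_z$ is bijective on invariants and to guarantee that $U_{-z}^\CG$ lands inside $\widetilde{U}_z^\CG$. The remaining verifications are routine bookkeeping with the PBW basis and the signs $\varepsilon_b$.
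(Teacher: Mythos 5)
Your proposal is correct and follows essentially the same route as the paper: the key step in both is that $U_{-z}^\CG\subset\widetilde{U}_z^\CG=U_z^\CG\,\BC[\CG]$, on which the projection $u\delta\mapsto u$ is multiplicative because $\CG$-invariant elements commute with $\CG$, while the linear bijectivity and $\CH$-compatibility come from the $\CG$-equivariance of $\widetilde{\Xi}_z$ (which the paper takes from Lemma \ref{lem:kashiwara0} and you make explicit with a PBW basis and Proposition \ref{prop:kashiwara2}). The extra PBW bookkeeping is harmless but not a genuinely different argument.
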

\begin{proof}
Since $\widetilde{\Xi}_z$ is a linear isomorphism compatible with $\CG$-actions,
it induces a
 linear isomorphism
$
\Xi_z:U_{-z}^\CG\to U_{z}^\CG
$.
Note that
$U_{-z}^\CG\subset \widetilde{U}_z^\CG=U_z^\CG\BC[\CG]$.
For $u, u'\in  U_z^\CG$, $\delta, \delta'\in\CG$
we have
\[
\widetilde{\Xi}_z((u\delta)(u'\delta'))
=\widetilde{\Xi}_z(uu'\delta\delta')
=uu'
=\widetilde{\Xi}_z(u\delta)
\widetilde{\Xi}_z(u'\delta').
\]
Hence $\widetilde{\Xi}_z|U_z^\CG\BC[\CG]:U_z^\CG\BC[\CG]\to U_z^\CG$ is an algebra homomorphism.
It follows that its restriction $
\Xi_z:U_{-z}^\CG\to U_{z}^\CG
$
is also an algebra homomorphism.
The remaining statement about the action of $\CH$ is obvious.
\end{proof}

\subsection{}
\label{subsec:K}
Let  $G=G(\Delta)$ be a connected, simply-connected semisimple algebraic group over $\BC$ with root system $\Delta$.
Take a maximal torus $H=H(\Delta)$ of $G$ and Borel subgroups $B^+, B^-$ of $G$ such that $B^+\cap B^-=H$.
Set $N^\pm=[B^\pm,B^\pm]$, and define a closed subgroup $K=K(\Delta)$ of $B^+\times B^-$ by
\[
K=\{(gh,g'h^{-1})\mid h\in H, g\in N^+, g'\in N^-\}.
\]
Setting
\[
K^0=\{(h,h^{-1})\mid h\in H\}\cong H,
\]
\[
K^+=
\{(g,1)\mid g\in N^+\},
\qquad
K^-=\{(1,g')\mid g'\in N^-\}\cong N^-,
\]
we obtain an isomorphism
\[
K^+\times K^0\times K^-\to K
\qquad((a,b,c)\mapsto abc)
\]
of algebraic varieties.
Denote by $\Gg$ the Lie algebra of $G$.
It is generated by the elements $\overline{h}_i, \overline{e}_i, \overline{f}_i\;(i\in I)$ satisfying the fundamental relations
\begin{align*}
&[\overline{h}_i,\overline{h}_j]=0\qquad &(i, j\in I),
\\
&[\overline{h}_i,\overline{e}_j]=a_{ij}\overline{e}_j,\qquad
[\overline{h}_i,\overline{f}_j]=-a_{ij}\overline{f}_j\qquad &(i, j\in I),
\\
&[\overline{e}_i,\overline{f}_j]=\delta_{ij}
\overline{h}_i
\qquad&(i, j\in I),
\\
&\ad(\overline{e}_i)^{1-a_{ij}}(\overline{e}_j)=
\ad(\overline{f}_i)^{1-a_{ij}}(\overline{f}_j)
=0
\qquad&(i,j\in I,\,i\ne j).
\end{align*}
Then the Lie algebra $\Gk$ of $K$ is the subalgebra of $\Gg\oplus\Gg$ generated by elements 
$\overline{t}_i=(\overline{h}_i,-\overline{h_i})$, 
$\overline{x}_i=(\overline{e}_i,0)$, 
$\overline{y}_i=(0,\overline{f_i})$\;($i\in I$).
Those generators satisfy the fundamental relations
\begin{align*}
&[\overline{t}_i,\overline{t}_j]=0\qquad &(i, j\in I),
\\
&[\overline{t}_i,\overline{x}_j]=a_{ij}\overline{x}_j,\qquad
[\overline{t}_i,\overline{y}_j]=a_{ij}\overline{y}_j\qquad &(i, j\in I),
\\
&[\overline{x}_i,\overline{y}_j]=0\qquad&(i, j\in I),
\\
&\ad(\overline{x}_i)^{1-a_{ij}}(\overline{x}_j)=
\ad(\overline{y}_i)^{1-a_{ij}}(\overline{y}_j)
=0
\qquad&(i,j\in I,\,i\ne j).
\end{align*}
Let $U(\Gk)$ be the enveloping algebra of $\Gk$.
We can define the modified version $\dot{U}(\Gk)$ of $U(\Gk)$ similarly to
$\dot{V}$ as follows.
For $\gamma\in Q^+$ set 
$U(\Gk)_{\gamma}=
\{u\in U(\Gk)\mid 
[\overline{t}_i,u]={(\gamma,\alpha_i^\vee)}u\;(i\in I)\}$.
For $\lambda, \mu\in P$ we set 
\[
{}_\lambda \overline{U}(\Gk)_\mu=
{U}(\Gk)/
(
\sum_{i\in I}(\overline{t}_i-{(\lambda,\alpha_i^\vee)}){U}(\Gk)
+
\sum_{i\in I}{U}(\Gk)(\overline{t}_i-{(\mu,\alpha_i^\vee)})
),
\]
and let
${}_\lambda \pi^1_\mu:U(\Gk)\to
{}_\lambda \overline{U}(\Gk)_\mu$ be the natural map.
Set 
\[
\dot{U}(\Gk)=\bigoplus_{\lambda,\mu\in P}
{}_\lambda \overline{U}(\Gk)_\mu.
\]
Then $\dot{U}(\Gk)$ is an associative algebra (without 1) by 
\[
{}_\lambda \pi^1_\mu(x){}_{\lambda'} \pi^1_{\mu'}(y)=
\begin{cases}
{}_\lambda \pi^1_{\mu'}(xy)
\quad&(\mu=\lambda')\\
0&(\mu\ne\lambda')
\end{cases}
\]
for $x\in U(\Gk)_{\lambda-\mu},\;y\in U(\Gk)_{\lambda'-\mu'}$.
It is easily seen that we have $\dot{V}_1\cong\dot{U}(\Gk)$.

Regard $\BC[K]$ as a $U(\Gk)$-module by differentiating the $K$-action 
\[
(kf)(k')=f(k'k)\;(k, k'\in\BC[K],\,f\in\BC[K])
\]
on $\BC[K]$.
Since $\BC[K]$ is a sum of finite dimensional $U(\Gk)$-submodules with weight space decomposition, we obtain a natural action of $\dot{U}(\Gk)$ on $\BC[K]$.
Cosider the bilinear form
\[
\overline{\sigma}:\BC[K]\times \dot{U}(\Gk)\to\BC
\qquad(
\overline{\sigma}(f,x)=(xf)(1)),
\]
By Proposition \ref{prop:sigma-inv} and $K\cong K^+\times K^0\times K^-$, 
we see easily that an isomorphism
\begin{equation}
\label{eq:U1K}
\Upsilon:U_1\to\BC[K]
\end{equation}
of coalgebras is given by
\[
\overline{\sigma}(\Upsilon(u),x)
=
\sigma_1(u,x)
\qquad(u\in U_1, \;x\in \dot{V}_1=\dot{U}(\Gk)).
\]
Since $U_1$ and $\BC[K]$ are commutative, it is easily seen  that \eqref{eq:U1K} is an isomorphism of Hopf algebras (see \cite{DP}, \cite{Gav}, \cite{TM}).

\section{Harish-Chandra center}
\label{sec:Har}
\subsection{}
For a ring $R$ we denote its center by $Z(R)$.

Consider the composite of
\[
Z(U)\hookrightarrow U\cong U^-\otimes U^0\otimes U^+\xrightarrow{\varepsilon\otimes1\otimes\varepsilon} U^0
\cong\BF[P],
\]
where $\BF[P]=\bigoplus_{\lambda\in P}\BF e(\lambda)$ is the group algebra of $P$, and the isomorphism
$U^0\cong\BF[P]$ is given by $k_\lambda\leftrightarrow e(\lambda)$.
By \cite{DK}, \cite{JL}, \cite{T0} this linear map $Z(U)\to\BF[P]$ is an injective algebra homomorphism whose image coincides with
\[
\BF[2P]^{W\circ}=\{x\in\BF[2P]\mid w\circ x=x\;(w\in W)\},
\]
where the action of $W$ on $\BF[2P]$ is given by
\[
w\circ e(2\lambda)=q^{(w\lambda-\lambda,2\tilde{\rho})} e(2w\lambda)\qquad
(w\in W,\;\lambda\in P).
\]
Hence we have an isomorphism
\begin{equation}
\iota:Z(U)\to\BF[2P]^{W\circ}.
\end{equation}

We recall here a description of $Z(U)$ in terms of the characters of finite-dimensional ${U}$-modules.
For $M\in\Mod(\dot{U})$ 
we define $\tilde{t}_M\in\dot{U}^*$ by 
\[
\langle \tilde{t}_M,x\rangle
=\trace(xk_{2\rho},M)\qquad(x\in\dot{U}).
\]
Then there exists uniquely an element $t_M\in {}^eU$ satisfying 
\[
\omega(t_M,x)=\langle\tilde{t}_M,x\rangle\qquad
(x\in \dot{U}).
\]
More explicitly, we have
\begin{align*}
t_M
=
\sum_{\lambda\in2P,\sum_{j=1}^N(m_j-m'_j)\beta_j=0}
c_{\lambda,\{m_j\}_{j=1}^N,\{m'_j\}_{j=1}^N}
a_{\beta_1}^{m_1}\cdots a_{\beta_N}^{m_N}
k_\lambda
S(b_{\beta_N}^{m'_N}\cdots b_{\beta_1}^{m'_1}),
\end{align*}
where
\begin{align*}
&c_{\lambda,\{m_j\}_{j=1}^N,\{m'_j\}_{j=1}^N}\\
=&
\prod_{j=1}^N
(-1)^{m_j+m'_j}q_{\beta_j}^{-m_j(m_j-1)/2-m'_j(m'_j-1)/2}\\
&
\times
\trace\left(\pi\left\{
f_{\beta_1}^{(m_1)}\cdots f_{\beta_N}^{(m_N)}
1_{-\frac{\lambda}2}
S(e_{\beta_N}^{(m'_N)}\cdots e_{\beta_1}^{(m'_1)})k_{2\rho}
\right\},1_{-\frac{\lambda}2-\sum_jm'_j\beta_j}M
\right).
\end{align*}

We can show $t_M\in Z(U)$ using $k_{2\rho}^{-1}uk_{2\rho}=S^2u\;(u\in U)$,
$Z(U)=\{v\in U\mid\ad(u)(v)=\varepsilon(u)v\;(u\in U)\}$, and
Proposition \ref{prop:omega-inv} (see \cite{T0}).
We have 
\[
\iota(t_M)=
\sum_{\lambda\in P}
(\dim 1_\lambda M)
q^{(\lambda,2\tilde{\rho})}e({-2\lambda}).
\]
\begin{proposition}
\label{prop:ZU}
\begin{itemize}
\item[(i)]
$Z(U)\subset U^\CG$.
\item[(ii)]
We have
\[
Z({}^\theta U)=Z(\widetilde{U})=Z(U)
\]
as subalgebras of $\widetilde{U}$.
Moreover, the composite of 
\[
\BF[2P]^{W\circ}\cong Z(U)=
Z({}^\theta U)\cong {}^\theta Z(U)\cong
{}^\theta (\BF[2P]^{W\circ})
\]
is induced by the $\BF$-linear isomorphism
\[
\BF[2P]\ni e(2\lambda)\mapsto {}^\theta e(2\lambda)\in {}^\theta \BF[2P].
\]
\end{itemize}
\end{proposition}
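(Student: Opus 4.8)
The plan is to prove (i) first and to feed it into (ii); inside (ii) the essential point will be that for $1\neq\delta\in\CG$ the automorphism $\delta$ of $U$ is "outer enough", and this is the step I expect to be the main obstacle.

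\emph{Proof of (i).} Each $\delta_\lambda\in\CG$ acts on $U$ by an $\BF$-algebra automorphism that respects the triangular decomposition $U\cong U^-\otimes U^0\otimes U^+$ factor by factor (it rescales each $e_i,f_i$ by a sign and fixes $U^0$ pointwise) and satisfies $\varepsilon\circ\delta_\lambda=\varepsilon$ on generators, hence everywhere. Consequently the Harish-Chandra projection $U\xrightarrow{\varepsilon\otimes1\otimes\varepsilon}U^0$ carries the $\CG$-action on $U$ to the trivial action on $U^0$. Since a ring automorphism preserves $Z(U)$ and $\iota$ is injective on $Z(U)$, for $z\in Z(U)$ we get $\iota(\delta_\lambda z)=\iota(z)$, whence $\delta_\lambda z=z$; thus $Z(U)\subseteq U^\CG$.

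\emph{Proof of the equalities in (ii).} By (i) every element of $Z(U)$ commutes with $\BF[\CG]$ and with $U$, hence with all of $\widetilde U=U\cdot\BF[\CG]$, so $Z(U)\subseteq Z(\widetilde U)$. Applying (i) to ${}^\theta U$ — whose $\CG$-action, by Lemma~\ref{lem:kashiwara0}(ii), is carried by the ring isomorphism ${}^\theta u\leftrightarrow u$ to the $\CG$-action on $U$, so that $Z({}^\theta U)\subseteq({}^\theta U)^\CG$ — and using $\widetilde U={}^\theta U\cdot\BF[\CG]$ (Lemma~\ref{lem:kashiwara0}(i)) gives likewise $Z({}^\theta U)\subseteq Z(\widetilde U)$. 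For the reverse inclusions, write $x\in Z(\widetilde U)$ as $x=\sum_{\delta\in\CG}u_\delta\,\delta$ with $u_\delta\in U$; commutation with $\BF[\CG]$ forces $u_\delta\in U^\CG$, and commutation with $U$ forces $u_\delta\,\delta(v)=v\,u_\delta$ for all $v\in U$. For $\delta=1$ this says $u_1\in Z(U)$, and the whole argument will be finished once one knows the following. \textbf{Key Lemma.} For $1\neq\delta\in\CG$ there is no nonzero $u\in U$ with $u\,\delta(v)=v\,u$ for all $v\in U$. Granting it, $x=u_1\in Z(U)$, so $Z(\widetilde U)\subseteq Z(U)$; rewriting $x$ via the decomposition $\widetilde U=\bigoplus_\delta({}^\theta U)\,\delta$ gives in the same way $Z(\widetilde U)\subseteq Z({}^\theta U)$. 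Together with the two inclusions just proved we obtain $Z(U)=Z(\widetilde U)=Z({}^\theta U)$.

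\emph{Proof of the Key Lemma.} Write $\delta=\delta_\mu$ with $\mu\notin P_0$, so $\epsilon_j:=(-1)^{d_j(\mu,\alpha_j^\vee)}=-1$ for some $j$; taking $v=k_\nu$ shows $u$ has weight $0$, and $v=e_j$ gives $u\,e_j=\epsilon_j\,e_j\,u$ in $U$. Pass to the associated graded algebra $\gr U$ for the De Concini--Kac filtration, which is a quantum affine space: it is a domain, and all of its defining relations are $q$-commutations $XY=q^{n}YX$ with $n\in\BZ$ (no signs). Since $\gr U$ is a domain, the leading symbols satisfy $\bar u\,\bar e_j=\epsilon_j\,\bar e_j\,\bar u$ with $\bar u\neq0$. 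Expanding $\bar u=\sum_k a_k M_k$ in the PBW basis of $\gr U$ and moving $\bar e_j$ to the right in each monomial, one has $M_k\bar e_j=q^{r_k}N_k$ and $\bar e_j M_k=q^{w_k}N_k$ for the same PBW monomial $N_k$ (two orderings of the same letters differ by a power of $q$), and the $N_k$ are linearly independent; comparing coefficients forces $q^{\,r_k-w_k}=\epsilon_j=-1$ for every $k$ with $a_k\neq0$, which is impossible in $\BF=\BQ(q)$. Hence $\bar u=0$, a contradiction. This is exactly where the genericity of $q$ (no power of $q$ is a nontrivial sign) enters; everything else is essentially bookkeeping.

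\emph{Proof of the "moreover" statement.} Given $x=\sum_\lambda a_\lambda e(2\lambda)\in\BF[2P]^{W\circ}$, put $z_0=\iota^{-1}(x)\in Z(U)\subseteq\widetilde U$; by the equalities already proved $z_0\in Z({}^\theta U)=\{{}^\theta z\mid z\in Z(U)\}$, so $z_0={}^\theta z_1$ for a unique $z_1\in Z(U)$, and the composite in question sends $x$ to the ${}^\theta$-twist of $\iota(z_1)$. Decompose $z_1=z_1^0+z_1'$ in $U\cong U^-\otimes U^0\otimes U^+$, where $z_1^0=\sum_\lambda c_\lambda k_{2\lambda}$ is the Harish-Chandra part (so $\iota(z_1)=\sum_\lambda c_\lambda e(2\lambda)$) and $z_1'$ is a sum of PBW monomials of positive $e$- or $f$-degree. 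Using the explicit form of the embedding ${}^\theta U\hookrightarrow\widetilde U$ from Propositions~\ref{prop:kashiwara1} and~\ref{prop:kashiwara2}: since $2\lambda\in P_0$ one has ${}^\theta k_{2\lambda}\mapsto k_{2\lambda}\delta_{2\lambda}=k_{2\lambda}$, so ${}^\theta z_1^0\mapsto\sum_\lambda\theta(c_\lambda)k_{2\lambda}$, while ${}^\theta$ of a PBW monomial of positive degree is a scalar times that monomial times an element of $\CG$. Because $z_0={}^\theta z_1$ lies in $U$, the $\CG$-components of ${}^\theta z_1'$ outside $\delta=1$ vanish, so $z_0=\sum_\lambda\theta(c_\lambda)k_{2\lambda}+{}^\theta z_1'$ with ${}^\theta z_1'$ again a sum of monomials of positive $e$- or $f$-degree. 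Applying the Harish-Chandra projection kills ${}^\theta z_1'$ and gives $x=\iota(z_0)=\sum_\lambda\theta(c_\lambda)e(2\lambda)$, i.e.\ $c_\lambda=\theta(a_\lambda)$. Hence the composite sends $x$ to ${}^\theta\bigl(\sum_\lambda\theta(a_\lambda)e(2\lambda)\bigr)=\sum_\lambda a_\lambda\,{}^\theta e(2\lambda)$, which is the map induced by $e(2\lambda)\mapsto{}^\theta e(2\lambda)$.
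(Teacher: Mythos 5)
Your proposal is correct and follows essentially the same route as the paper: (i) via invariance of the Harish-Chandra projection, (ii) by writing an element of $Z(\widetilde{U})$ as $\sum_{\delta}u_\delta\delta$ and killing the $\delta\neq1$ components in the associated graded algebra $\Gr U$ of De Concini--Procesi, and the ``moreover'' statement from ${}^\theta k_{2\lambda}=k_{2\lambda}$. The only difference is that you spell out in detail the steps the paper compresses into ``we see easily'' (your Key Lemma, using that the commutation constants in $\Gr U$ are integer powers of $q$, hence never $-1$ over $\BQ(q)$) and the one-line deduction of the twisted Harish-Chandra compatibility.
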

\begin{proof}
(i) Let $\delta\in\CG$.
Since $\delta$ acts on $U$ as an algebra automorphism, we have $\delta(Z(U))=Z(U)$.
It is easily seen from the definition of $\delta$ that 
$\iota(\delta(z))=\iota(z)$ for any $z\in Z(U)$.
Hence $\delta$ acts as identity on $Z(U)$.

(ii) By (i) we have $Z(U)\subset Z(\widetilde{U})$.
Let us show $Z(U)\supset Z(\widetilde{U})$.
Let $z=\sum_{\delta\in\CG}u_\delta\delta\in Z(\widetilde{U})$, where $u_\delta\in U$.
By $uz=zu$ for $u\in U$ we have $uu_\delta=u_\delta\delta(u)$.
By considering the corresponding identity in the associated graded algebra $\Gr\, U$ introduced in \cite{DKP2} we see easily that 
$u_\delta=0$ for $\delta\ne1$.
Hence $z\in Z(U)$.
The proof of $Z({}^\theta U)=Z(\widetilde{U})$ is similar.
The remaining statement is a consequence of ${}^\theta k_{2\lambda}=k_{2\lambda}$ for $\lambda\in P$.
\end{proof}

\subsection{}
By $Z(U_\BA)=U_\BA\cap Z(U)$ $\iota$ induces an injective algebra homomorphism
\[
\iota_\BA:Z(U_\BA)\to\BA[2P]^{W\circ}
\]
\begin{proposition}
\label{prop:HCA}
$\iota_\BA$ is an isomorphism of $\BA$-algebras.
\end{proposition}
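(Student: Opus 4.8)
The map $\iota_\BA$ is injective by construction (it is the restriction of the injective map $\iota$), so the whole content is surjectivity. The plan is to show that every $W\circ$-invariant element of $\BA[2P]$ arises as $\iota_\BA(z)$ for some $z\in Z(U_\BA)$. Since we already know $\iota:Z(U)\xrightarrow{\sim}\BF[2P]^{W\circ}$ is an isomorphism over $\BF=\BQ(q)$, and we know $Z(U_\BA)=U_\BA\cap Z(U)$, it suffices to produce enough central elements in $U_\BA$ whose images span $\BA[2P]^{W\circ}$ as an $\BA$-module. The natural candidates are the elements $t_M$ for $M\in\Mod(\dot U)$ introduced just above: the explicit formula shows $t_M$ is an $\BA$-linear combination of monomials $a_{\beta_1}^{m_1}\cdots a_{\beta_N}^{m_N}k_\lambda S(b_{\beta_N}^{m'_N}\cdots b_{\beta_1}^{m'_1})$ with coefficients $c_{\lambda,\{m_j\},\{m'_j\}}$, and those coefficients lie in $\BA$ because each is (up to a sign and a power of $q_{\beta_j}$) a trace of an element of $\dot U_\BA$ acting on a finite-dimensional module $M$ that we may take $\BA$-integral, i.e. $M=\BC\otimes L_\BA(\mu)$ type data; equivalently one invokes that $\omega$ restricts to the $\BA$-valued perfect pairing $\omega_\BA:{}^eU_\BA\times\dot U_\BA\to\BA$ and that $\tilde t_M$ is $\BA$-valued on $\dot U_\BA$. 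Hence $t_M\in{}^eU_\BA\cap Z(U)\subset U_\BA\cap Z(U)=Z(U_\BA)$.

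First I would fix, for each $\lambda\in P^+$, the irreducible module $L(\lambda)$ and compute $\iota(t_{L(\lambda)})=\sum_{\mu\in P}(\dim 1_\mu L(\lambda))\,q^{(\mu,2\tilde\rho)}e(-2\mu)$, which is exactly the image under $e(2\nu)\mapsto q^{(\nu,2\tilde\rho)}e(2\nu)$ (a "twist") of the ordinary character $\mathrm{ch}\,L(\lambda)=\sum(\dim 1_\mu L(\lambda))e(-2\mu)\in\BZ[2P]^W$. The classical fact that the characters $\{\mathrm{ch}\,L(\lambda)\mid\lambda\in P^+\}$ form a $\BZ$-basis of $\BZ[2P]^{W}$ (Weyl character theory, after identifying $\BZ[P]^W$ with $\BZ[2P]^W$ via the isomorphism $P\cong 2P$) then transports, via this twist, to the statement that $\{\iota(t_{L(\lambda)})\}$ is an $\BA$-basis of the twisted invariant ring $\BA[2P]^{W\circ}$ — here one checks that the twisting bijection $\BA[2P]\to\BA[2P]$, $e(2\nu)\mapsto q^{(\nu,2\tilde\rho)}e(2\nu)$, carries the ordinary $W$-action to the $W\circ$-action, which is precisely the defining formula $w\circ e(2\nu)=q^{(w\nu-\nu,2\tilde\rho)}e(2w\nu)$. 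Therefore the $t_{L(\lambda)}$ lie in $Z(U_\BA)$ and their images exhaust $\BA[2P]^{W\circ}$, giving surjectivity.

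The main obstacle is the integrality claim: that $t_M\in U_\BA$ and not merely in $U$. The coefficient formula for $c_{\lambda,\{m_j\},\{m'_j\}}$ involves the divided powers $f_{\beta_1}^{(m_1)}\cdots$, the idempotents $1_\nu$, and $S(e_\beta^{(\cdot)}\cdots)$ acting on $M$, i.e. elements of $\dot U_\BA$ (built from the Lusztig integral form $U^L_{\ad,\BA}$), and the prefactor $q_{\beta_j}^{-m_j(m_j-1)/2-m'_j(m'_j-1)/2}$ together with the sign is in $\BA$; so $c_{\lambda,\{m_j\},\{m'_j\}}\in\BA$ provided $M$ has an $\BA$-integral lattice stable under $\dot U_\BA$, which is exactly $L_\BA(\mu)$. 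One must also make sure the outer monomials $a_{\beta_1}^{m_1}\cdots k_\lambda\cdots S(b_{\beta_N}^{m'_N}\cdots)$ lie in $U_\BA$: they do, since $a_\beta,b_\beta\in U_\BA$ by definition and $S(U_\BA)=U_\BA$. Assembling, $t_M\in{}^eU_\BA$, and combined with $t_M\in Z(U)$ (proved earlier from Proposition \ref{prop:omega-inv}) we get $t_M\in Z(U_\BA)$. The only remaining subtlety is finiteness of the sum defining $t_M$, which holds because $M$ is finite-dimensional so only finitely many traces are nonzero; with that, $\iota_\BA$ is surjective and hence an isomorphism of $\BA$-algebras.
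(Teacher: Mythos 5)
Your proposal is correct and is essentially the paper's own proof, which consists precisely of the two assertions you establish in detail: $t_{L(\lambda)}\in U_\BA$ (hence in $Z(U_\BA)$), and $\{\iota(t_{L(\lambda)})\}_{\lambda\in P^+}$ spans $\BA[2P]^{W\circ}$ over $\BA$, via the twisted Weyl characters. The only quibble is the direction of your twisting map $e(2\nu)\mapsto q^{(\nu,2\tilde\rho)}e(2\nu)$ versus $q^{-(\nu,2\tilde\rho)}e(2\nu)$, a convention-dependent sign that does not affect the argument (spanning also follows from the usual triangularity of the $\iota(t_{L(\lambda)})$ with unit leading coefficients).
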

\begin{proof}
For $\lambda\in P^+$ we have $t_{L(\lambda)}\in U_\BA$, and 
$\BA[2P]^{W\circ}$ is spanned over $\BA$ by $\iota(t_{L(\lambda)})$ for $\lambda\in P^+$.
\end{proof}

\subsection{}
Let $z\in\BC^\times$.
We denote by $Z_{\Har}(U_z)$ the image of $Z(U_\BA)\to Z(U_z)$, and call it the Harish-Chandra center of $U_z$.
We can similarly consider the composite of
\[
Z_{\Har}(U_z)\hookrightarrow U_z\cong U_z^-\otimes U_z^0\otimes U_z^+\xrightarrow{\varepsilon\otimes1\otimes\varepsilon} U_z^0
\cong\BC[P].
\]
We define an action $\circ_z$ of $W$ on $\BC[2P]$ by
\[
w\circ_z e(2\lambda)=z^{(w\lambda-\lambda,2\tilde{\rho})} e(2w\lambda)\qquad
(w\in W,\;\lambda\in P).
\]

\begin{proposition}
\label{prop:HCzeta}
The above linear map $Z_{\Har}(U_z)\to\BC[P]$ induces an isomorphism 
\[
\iota_z:
Z_{\Har}(U_z)\to\BC[2P]^{W\circ_z}
\]
of $\BC$-algebras.
\end{proposition}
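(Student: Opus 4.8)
The plan is to obtain $\iota_z$ by base-change from the integral isomorphism $\iota_\BA$ of Proposition \ref{prop:HCA}, and then check that the base-changed target $\BC\otimes_\BA\BA[2P]^{W\circ}$ is exactly $\BC[2P]^{W\circ_z}$. First I would observe that $Z_{\Har}(U_z)$ is by definition the image of the map $\BC\otimes_\BA Z(U_\BA)\to U_z$; since $Z(U_\BA)$ is a free $\BA$-module (being a direct summand of $U_\BA^0\cong\bigoplus_\lambda\BA k_\lambda$ via $\iota_\BA$, or at least embeddable in such), applying $\BC\otimes_\BA-$ to the injection $\iota_\BA:Z(U_\BA)\hookrightarrow\BA[2P]^{W\circ}$ and to the commuting triangle relating $Z(U_\BA)$, $U_\BA^0$ and $\BA[2P]$ yields a commutative diagram over $\BC$ showing that the composite $Z_{\Har}(U_z)\to U_z^0\cong\BC[P]$ factors through $\BC\otimes_\BA(\BA[2P]^{W\circ})$ and that the induced map $\iota_z$ is the base change of $\iota_\BA$, hence surjective onto $\BC\otimes_\BA(\BA[2P]^{W\circ})$.

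Next I would identify $\BC\otimes_\BA(\BA[2P]^{W\circ})$ with $\BC[2P]^{W\circ_z}$. The specialization $\BA\to\BC$, $q\mapsto z$, carries the twisted $W$-action $\circ$ to the action $\circ_z$, because the twist cocycle $w\circ e(2\lambda)=q^{(w\lambda-\lambda,2\tilde\rho)}e(2w\lambda)$ specializes coefficient-wise to $z^{(w\lambda-\lambda,2\tilde\rho)}e(2w\lambda)$ (note $(w\lambda-\lambda,2\tilde\rho)\in\BZ$ so this is an honest power of $z$, well-defined in $\BC^\times$). The inclusion $\BC\otimes_\BA(\BA[2P]^{W\circ})\subseteq\BC[2P]^{W\circ_z}$ is then immediate. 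For the reverse inclusion I would use that $\BA[2P]^{W\circ}$ is spanned over $\BA$ by the elements $\iota(t_{L(\lambda)})=\sum_{\mu}(\dim 1_\mu L(\lambda))q^{(\mu,2\tilde\rho)}e(-2\mu)$ for $\lambda\in P^+$, as recorded in the proof of Proposition \ref{prop:HCA}; their specializations $\iota_z(\bar t_{L_z(\lambda)})$ are the analogous "twisted characters'' in $\BC[2P]^{W\circ_z}$, and the standard triangularity argument (the highest weight $2\lambda$ appears with coefficient $1$, lower weights below it in the dominance order) shows these specialized elements span $\BC[2P]^{W\circ_z}$ over $\BC$. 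Hence $\iota_z$ is surjective onto all of $\BC[2P]^{W\circ_z}$.

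It remains to prove injectivity of $\iota_z$, which is the main obstacle, since base change does not preserve injectivity in general. Here I would argue that the composite $Z_{\Har}(U_z)\hookrightarrow U_z\cong U_z^-\otimes U_z^0\otimes U_z^+\xrightarrow{\varepsilon\otimes1\otimes\varepsilon}U_z^0$ is injective by a direct argument independent of the integral structure: if a central element of $U_z$ in the Harish-Chandra subalgebra has vanishing image in $U_z^0$, then writing it in the triangular decomposition and using centrality (commutation with $U_z^0=\BC[P]$ forces each weight component to have weight $0$, and commutation with the $e_i$'s and $f_i$'s, exactly as in the classical Harish-Chandra argument and as used for $\iota$ over $\BF$) forces it to be $0$. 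Alternatively, and perhaps more cleanly, I would note that the surjection $\iota_z$ between the two $\BC$-algebras is split by the section sending $\iota_z(\bar t_{L_z(\lambda)})$ to $\bar t_{L_z(\lambda)}$, which is well-defined precisely because the $\iota_z(\bar t_{L_z(\lambda)})$ are linearly independent in $\BC[2P]^{W\circ_z}$ (again by triangularity), so $\iota_z$ is a bijection. I expect the triangularity/linear-independence bookkeeping for the twisted characters at the specialized parameter to be the only genuinely delicate point; everything else is formal base change from Section \ref{sec:Har}.
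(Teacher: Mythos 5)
Your second injectivity argument (the ``section'' one) is sound and, together with your identification of $\BC\otimes_\BA\BA[2P]^{W\circ}$ with $\BC[2P]^{W\circ_z}$, gives a complete proof; but it is a genuinely different route from the paper's. The paper's proof is a two-line base-change argument: since $Z(U_\BA)=U_\BA\cap Z(U)$, the quotient $U_\BA/Z(U_\BA)$ embeds into $U/Z(U)$ and is therefore torsion-free, hence flat, over the principal ideal domain $\BA=\BQ[q^{\pm1}]$, so tensoring with $\BC$ keeps $\BC\otimes_\BA Z(U_\BA)\to U_z$ injective; consequently $Z_{\Har}(U_z)\cong\BC\otimes_\BA Z(U_\BA)\cong\BC\otimes_\BA\BA[2P]^{W\circ}\cong\BC[2P]^{W\circ_z}$, with Proposition \ref{prop:HCA} supplying the middle isomorphism. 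Your route replaces this saturation/torsion-freeness step by the explicit spanning set $\{t_{L_z(\lambda)}\}_{\lambda\in P^+}$ of $Z_{\Har}(U_z)$ together with the linear independence (and spanning, via unitriangularity against the twisted orbit sums) of the specialized twisted characters in $\BC[2P]^{W\circ_z}$; this is a little longer, but it has the merit of making explicit the fact, recorded separately in the paper right after the proposition, that $\{t_{L_z(\lambda)}\}_{\lambda\in P^+}$ is a basis of $Z_{\Har}(U_z)$. (Minor slip: the leading coefficient of $\iota_z(t_{L_z(\lambda)})$ at $e(-2\lambda)$ is $z^{(\lambda,2\tilde\rho)}$, not $1$; it is nonzero, which is all you need.)

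You should, however, discard your first suggested injectivity argument: the classical Harish-Chandra argument does not survive specialization to a root of unity, and its conclusion is false for central elements in general. When $z=\zeta$, commutation with $U_z^0$ no longer forces nonzero-weight components to vanish, since for instance $k_\lambda a_\alpha^{r_\alpha}k_\lambda^{-1}=\eta_\alpha^{(\lambda,\alpha^\vee)}a_\alpha^{r_\alpha}$ with $\eta_\alpha=\pm1$, and $\eta_\alpha=1$ for all $\alpha$ when $\ell$ is odd. Concretely, $a_\alpha^{r_\alpha}$ (for $\ell$ odd), or $a_\alpha^{2r_\alpha}$ (for $\ell$ even), is a nonzero element of the Frobenius center, hence of $Z(U_\zeta)$, and it lies in $U_\zeta^+$, so it is killed by $\varepsilon\otimes1\otimes\varepsilon$. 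Thus the Harish-Chandra projection is not injective on $Z(U_\zeta)$, and any proof of injectivity on $Z_{\Har}(U_\zeta)$ must use the provenance of its elements from $Z(U_\BA)$ (as the paper does) or from the characters $t_{L_z(\lambda)}$ (as your alternative argument does), not centrality alone.
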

\begin{proof}
By $Z(U_\BA)=U_\BA\cap Z(U)$ 
the canonical map
$\BC\otimes_\BA Z(U_\BA)\to U_z$ is injective.
Hence $Z_{\Har}(U_z)\cong \BC\otimes_\BA Z(U_\BA)\cong\BC[2P]^{W\circ_z}$.
\end{proof}

For $M\in\Mod(\dot{U}_z)$
we can similarly define $t_M\in {}^eU_z$ by
\[
\omega_z(t_M,x)
=\trace(xk_{2\rho},M)\qquad(x\in\dot{U}_z).
\]
By our construction $\{t_{L_z(\lambda)}\mid\lambda\in P^+\}$ is a basis of $Z_{\Har}(U_z)$.
Indeed for $M\in\Mod(\dot{U}_z)$
we can write 
\[
[M]=\sum_{\lambda\in P^+}m_\lambda[L_z(\lambda)]
\qquad(m_\lambda\in\BZ)
\]
in an appropriate Grothendieck group, and in this case we have
\[
t_M=\sum_{\lambda\in P^+}m_\lambda t_{L_z(\lambda)}\in Z_{\Har}(U_z).
\]

Note that for $z\in\BC^\times$ the two actions $\circ_z$ and $\circ_{-z}$ of $W$ on $\BC[2P]$ are the same.
By Proposition \ref{prop:ZU} we have the following.
\begin{proposition}
For $z\in\BC^\times$ we have 
$U_z^{\CG}
\supset
Z_{\Har}(U_z),
$
and the isomorphism $\Xi_z:U_{-z}^\CG\to U_{z}^\CG$ induces the isomorphism
$Z_{\Har}(U_{-z})
\cong
Z_{\Har}(U_z)$ given by
\[
Z_{\Har}(U_{-z})
\xrightarrow{\iota_{-z}}
\BC[2P]^{W\circ_{-z}}
=
\BC[2P]^{W\circ_{z}}
\xleftarrow{\iota_{z}}
Z_{\Har}(U_z)
\]
\end{proposition}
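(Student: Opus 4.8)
The plan is to reduce the statement to Proposition~\ref{prop:ZU} by passing to the integral form $U_\BA$ and then specializing $q$ to $\pm z$. For the inclusion $U_z^\CG\supset Z_{\Har}(U_z)$, note first that $Z_{\Har}(U_z)$ is by definition the image of $Z(U_\BA)=U_\BA\cap Z(U)$ under $U_\BA\to U_z$, and $Z(U)\subset U^\CG$ by Proposition~\ref{prop:ZU}(i), so $Z(U_\BA)\subset U_\BA^\CG$. The $\CG$-action on $U_\BA$ is $\BA$-linear and acts by signs on a PBW-type $\BA$-basis, so $U_\BA$ is the direct sum of its $\CG$-isotypic $\BA$-submodules; this splitting survives $-\otimes_\BA\BC$, whence the image of $U_\BA^\CG$ in $U_z$ lies in $U_z^\CG$, giving $Z_{\Har}(U_z)\subset U_z^\CG$.

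For the second assertion I would recall from Proposition~\ref{prop:Xi} that $\Xi_z$ is the restriction to $U_{-z}^\CG$ of the linear map $\widetilde\Xi_z\colon U_{-z}\to U_z$ obtained by viewing $U_{-z}$ inside $\widetilde U_z$ (the embedding being the $q\mapsto z$ reduction of ${}^\theta U_\BA\hookrightarrow\widetilde U_\BA$, under the identification $\BC\otimes_{\BA,\,q\mapsto z}{}^\theta U_\BA\cong U_{-z}$, ${}^\theta x\otimes1\mapsto x\otimes1$) and then projecting $\widetilde U_z=U_z\otimes\BC[\CG]\to U_z$. Write $E$ for the embedding ${}^\theta U\hookrightarrow\widetilde U$ of Proposition~\ref{prop:kashiwara1} (identifying $U$ and ${}^\theta U$ as rings, so $E(k_\lambda)=k_\lambda\delta_\lambda$, $E(e_i)=e_i\varphi_i$, $E(f_i)=f_i\psi_i$), which by Proposition~\ref{prop:kashiwara2} restricts to ${}^\theta U_\BA\hookrightarrow\widetilde U_\BA$. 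The crux is that \emph{for $\tilde c\in Z(U_\BA)$ one has $E(\tilde c)\in Z(U_\BA)$ with trivial $\CG$-component}: since twisting the ground field leaves the center of a ring unchanged, ${}^\theta\tilde c\in Z({}^\theta U_\BA)$, so $E(\tilde c)\in Z({}^\theta U)=Z(\widetilde U)=Z(U)\subset U$ by Proposition~\ref{prop:ZU}(ii); as also $E(\tilde c)\in\widetilde U_\BA$ by construction, $E(\tilde c)\in U\cap\widetilde U_\BA=U_\BA$, hence $E(\tilde c)\in Z(U)\cap U_\BA=Z(U_\BA)$.

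Now take $\tilde c\in Z(U_\BA)$ and let $c_{-z}\in Z_{\Har}(U_{-z})$ be its reduction at $q\mapsto -z$. Under the identification above, $c_{-z}$ viewed in $\widetilde U_z$ is the reduction at $q\mapsto z$ of $E(\tilde c)\in U_\BA$, which has trivial $\CG$-component; hence $\Xi_z(c_{-z})=\widetilde\Xi_z(c_{-z})$ is the reduction at $q\mapsto z$ of $E(\tilde c)$, an element of $Z_{\Har}(U_z)$ by the crux. The explicit part of Proposition~\ref{prop:ZU}(ii) says $E$ is intertwined by $\iota$ with coefficientwise conjugation by $\theta$ on $\BF[2P]^{W\circ}$, which restricted to $U_\BA$ reads $\iota_\BA(E(\tilde c))=\overline{\iota_\BA(\tilde c)}$, the bar conjugating $\BA$-coefficients by $\theta\colon q\mapsto -q$. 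Since the Harish-Chandra projection is defined over $\BA$ and commutes with base change, and evaluating the bar at $q=z$ is evaluation at $q=-z$,
\begin{align*}
\iota_z(\Xi_z(c_{-z}))
&=\mathrm{red}_{q=z}\bigl(\iota_\BA(E(\tilde c))\bigr)
=\mathrm{red}_{q=z}\bigl(\overline{\iota_\BA(\tilde c)}\bigr)\\
&=\mathrm{red}_{q=-z}\bigl(\iota_\BA(\tilde c)\bigr)
=\iota_{-z}(c_{-z}),
\end{align*}
and both sides lie in $\BC[2P]^{W\circ_z}=\BC[2P]^{W\circ_{-z}}$ by the equality of the two $W$-actions noted just before the proposition. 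Thus $\Xi_z$ carries $Z_{\Har}(U_{-z})$ into $Z_{\Har}(U_z)$ with $\iota_z\circ\Xi_z=\iota_{-z}$ there; since $\iota_{-z}$ is surjective and $\iota_z$ injective (Proposition~\ref{prop:HCzeta}), this forces $\Xi_z(Z_{\Har}(U_{-z}))=Z_{\Har}(U_z)$ and identifies the induced isomorphism with the one in the statement.

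\emph{Main obstacle.} The real work is the crux claim and the identity $\iota_\BA(E(\tilde c))=\overline{\iota_\BA(\tilde c)}$: both are repackagings of Proposition~\ref{prop:ZU}(ii), but extracting them requires care in tracking the semilinear twist $\theta$ as it passes from ${}^\theta U$ through the embedding into $\widetilde U$, the decomposition $\widetilde U=U\otimes\BF[\CG]$, and the $\BA$-forms, and in pinning down that the embedding $U_{-z}\subset\widetilde U_z$ underlying $\widetilde\Xi_z$ really is the $q\mapsto z$ reduction of ${}^\theta U_\BA\hookrightarrow\widetilde U_\BA$ via $\BC\otimes_{\BA,\,q\mapsto z}{}^\theta U_\BA\cong U_{-z}$. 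The remaining points (semisimplicity of the $\CG$-action over $\BA$, base change of the Harish-Chandra projection, and that the $\theta$-conjugate evaluated at $z$ gives the value at $-z$) are routine.
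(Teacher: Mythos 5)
Your proof is correct and follows the paper's intended route: the paper offers no written argument beyond ``By Proposition \ref{prop:ZU} we have the following,'' and your write-up is precisely a careful expansion of that derivation — passing to $Z(U_\BA)\subset U_\BA^\CG$, using Proposition \ref{prop:kashiwara2} to stay in the $\BA$-form, reading off from Proposition \ref{prop:ZU}(ii) that $E({}^\theta\tilde c)\in Z(U_\BA)$ with trivial $\CG$-component and that $\iota_\BA(E({}^\theta\tilde c))=\overline{\iota_\BA(\tilde c)}$, and then specializing $q\mapsto z$. The routine points you flag (equivariance under base change, compatibility of the Harish-Chandra projection with specialization, and that the $\theta$-conjugate evaluated at $z$ is the value at $-z$) are indeed routine and handled correctly.
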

\subsection{}
We consider the case where $z=1$.
Since the action $\circ_1$ of $W$ on $\BC[2P]$ is nothing but the ordinary one, we have
\[
Z_{\Har}(U_1)\cong 
\BC[2P]^{W}
\cong \BC[P]^W\cong
\BC[H]^W
\cong
\BC[H/W].
\]
Here the second isomorphism is induced by $\BC[2P]\ni e(2\lambda)\leftrightarrow e(\lambda)\in\BC[P]$.
Recall also that we have an isomorphism
\[
U_1\cong \BC[K].
\]
Hence the inclusion $Z_{\Har}(U_1)\to U_1$ induces a morphism $f:K\to H/W$ of algebraic varieties.
Let us give an explicit description of this morphism.
Define a morphism
$
\kappa:K\to G
$
of algebraic varieties by
$\kappa((g_1, g_2)=g_1g_2^{-1}$.
We also define $\upsilon:G\to H/W$ as follows.
Let $g\in G$. 
Let $g_s$ be the semisimple part of $g$ with respect to the Jordan decomposition.
Then $\Ad(G)(g_s)\cap H$ consists of a single $W$-orbit.
We define $\upsilon(g)$ to be this $W$-orbit.

\begin{proposition}[\cite{DP}]
\label{prop:DP-Frob}
The morphism $f:K\to H/W$ is the composite of $\kappa:K\to G$ and $\upsilon:G\to H/W$.
\end{proposition}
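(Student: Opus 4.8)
The plan is to check the identity of morphisms $K\to H/W$ by comparing the corresponding comorphisms $\BC[H/W]\to\BC[K]$, which is legitimate since $H/W$ is affine. Identify $\BC[H/W]\cong\BC[H]^{W}\cong\BC[P]^{W}$, where $e(\nu)\in\BC[P]$ corresponds to the character $h\mapsto h^{\nu}$ of $H=H(\Delta)$ (recall $X^{*}(H)=P$ since $G$ is simply connected). For a finite-dimensional $G$-module $M$ write $\chi_{M}\in\BC[G]$ for its character $g\mapsto\trace(g;M)$. The elements $\chi_{\mu}:=\sum_{\nu}(\dim L_{1}(\mu)_{\nu})\,e(\nu)$ for $\mu\in P^{+}$, where $L_{1}(\mu)$ is the irreducible $G$-module of highest weight $\mu$, span $\BC[P]^{W}$ (they are unitriangular over the $W$-orbit sums with respect to the dominance order; in fact $\BC[P]^{W}$ is the polynomial algebra on the fundamental ones, so it would even suffice to treat fundamental $\mu$). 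Hence it suffices to show $f^{\#}(\chi_{\mu})=(\upsilon\circ\kappa)^{\#}(\chi_{\mu})$ in $\BC[K]$ for every $\mu\in P^{+}$.

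First I compute the right-hand side. Regarding $\chi_{\mu}$ as a $W$-invariant function on $H$, its pullback under $\upsilon\colon G\to H/W$ is the class function $g\mapsto\chi_{L_{1}(\mu)}(g_{s})$; since the unipotent part of $g$ acts on $L_{1}(\mu)$ with all eigenvalues equal to $1$, this is just $\chi_{L_{1}(\mu)}$. Composing with $\kappa\colon K\to G$, $(g_{1},g_{2})\mapsto g_{1}g_{2}^{-1}$, we obtain
\[
(\upsilon\circ\kappa)^{\#}(\chi_{\mu})=\bigl[(g_{1},g_{2})\longmapsto\chi_{L_{1}(\mu)}(g_{1}g_{2}^{-1})\bigr]\in\BC[K].
\]

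Next the left-hand side. By definition $f^{\#}$ is the composite $\BC[H/W]\cong Z_{\Har}(U_{1})\hookrightarrow U_{1}\cong\BC[K]$. By Proposition \ref{prop:HCzeta} at $z=1$ together with the formula for $\iota(t_{M})$ specialized to $q=1$, we have $\iota_{1}(t_{L_{1}(\mu)})=\sum_{\nu}(\dim L_{1}(\mu)_{\nu})\,e(-2\nu)$, so under $\BC[2P]^{W}\cong\BC[P]^{W}\cong\BC[H]^{W}$ the element $t_{L_{1}(\mu)}$ corresponds to the function $h\mapsto\sum_{\nu}(\dim L_{1}(\mu)_{\nu})h^{-\nu}=\chi_{L_{1}(\mu)^{\ast}}(h)$, i.e. to $\chi_{L_{1}(-w_{0}\mu)}$. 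Since $\{L_{1}(\mu)^{\ast}\mid\mu\in P^{+}\}$ again runs over all irreducible $G$-modules, the proposition is equivalent to the assertion that for every $\mu\in P^{+}$ the image of $t_{L_{1}(\mu)}$ under $U_{1}\cong\BC[K]$ is the regular function
\[
(g_{1},g_{2})\longmapsto\chi_{L_{1}(\mu)^{\ast}}(g_{1}g_{2}^{-1})=\chi_{L_{1}(\mu)}(g_{2}g_{1}^{-1})\qquad((g_{1},g_{2})\in K).
\]

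The heart of the matter, and the step I expect to be the main obstacle, is this identification of $t_{M}$ inside $\BC[K]$. One route is to unwind the construction of $t_{M}$ through the bilinear forms. By definition $\omega_{1}(t_{M},x)=\trace(xk_{2\rho};M)$ for $x\in\dot{U}_{1}$, where $\dot{U}_{1}$ is the modified enveloping algebra of $\Gg$; comparing the formulas for $\sigma$ and $\omega$ in terms of the Drinfeld pairing $\tau$ (they differ only by the substitution $\lambda\mapsto-\lambda$ on the toral part and an antipode on the negative factor), one re-expresses $\langle t_{M},\,\cdot\,\rangle$ through the perfect pairing $\sigma_{1}\colon U_{1}\times\dot{V}_{1}\to\BC$ of Proposition \ref{prop:sigma-inv}, under which $U_{1}=\BC[K]$ is the finite dual of the modified enveloping algebra $\dot{V}_{1}$ of $\Lie K\subset\Gb^{+}\oplus\Gb^{-}$. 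Evaluating at a point $(g_{1},g_{2})\in K$, the $\dot{V}_{1}$-action on $M$ becomes the pair of Borel actions obtained by restricting the $G$-module $M$ to $B^{+}$ and to $B^{-}$, and the twist by $k_{2\rho}$ together with the trace collapses the matrix-coefficient sum to $\trace(g_{1}g_{2}^{-1};M)$, i.e. $\chi_{M^{\ast}}(g_{1}g_{2}^{-1})$. Verifying that the signs and the $k_{2\rho}$-twist produce exactly this combination — rather than $\chi_{M}(g_{1}g_{2}^{-1})$ itself, or some other product of the two restricted actions — is the delicate point. Alternatively, since the claim concerns only $U_{1}$ it is insensitive to the parity of $\ell$, so one may simply invoke the corresponding computation in De Concini--Procesi \cite{DP} (there carried out via the quantum coadjoint action), after matching the character normalizations. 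In either case, once the image of $t_{L_{1}(\mu)}$ in $\BC[K]$ is pinned down it coincides with the function computed above for $(\upsilon\circ\kappa)^{\#}$, and the proof is complete.
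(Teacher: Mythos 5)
Your reductions are all correct but they are the routine part: passing to comorphisms, noting that the characters $\chi_\mu$ span $\BC[P]^W$, computing $(\upsilon\circ\kappa)^{\#}(\chi_\mu)$ via semisimple parts, and reading off $\iota_1(t_{L_1(\mu)})$ from the paper's formula. After these steps the proposition has merely been restated as: the image of $t_{L_1(\mu)}$ under $U_1\cong\BC[K]$ is the function $(g_1,g_2)\mapsto\chi_{L_1(\mu)^*}(g_1g_2^{-1})$. That identification is the entire content of the statement, and it is precisely what you do not prove: you call it ``the delicate point,'' sketch a heuristic through $\sigma_1$ and the two Borel restrictions without fixing the signs, the antipode on the negative factor, or the effect of the $k_{2\rho}$-twist (exactly the data that decides between $\chi_{M^*}(g_1g_2^{-1})$, $\chi_M(g_1g_2^{-1})$, or some other combination of the two restricted actions), and then offer as a fallback to quote De Concini--Procesi. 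Since the proposition is attributed to \cite{DP} in the first place, the fallback amounts to not proving it, and the sketch as written cannot be checked. So there is a genuine gap at the one step that matters.

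For comparison, the paper resolves this point structurally rather than character by character: using $\omega_1$ it embeds ${}^fU_1\subset{}^eU_1\subset(\dot{U}_1)^*$, identifies ${}^fU_1\cong\BC[G]$ (Caldero) and ${}^eU_1\cong\BC[N^-\times H\times N^+]$ from $\dot{U}_1\cong\bigoplus_\lambda U_1^{L,-}1_\lambda U_1^{L,+}$, so that $Z_{\Har}(U_1)\hookrightarrow U_1$ factors as $\BC[H/W]\to\BC[G]\to\BC[N^-\times H\times N^+]\to\BC[K]$, and then writes down the three corresponding morphisms of varieties explicitly; the normalizations you leave open appear there concretely as $(g_+g_0,g_-g_0^{-1})\mapsto(g_-,g_0^{-2},g_+^{-1})$, $(x_-,x_0,x_+)\mapsto x_-x_0x_+$, and $g\mapsto\upsilon(g)^{-1}$, whose composite is indeed $\upsilon\circ\kappa$. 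Your element-wise strategy could certainly be completed, e.g.\ by carrying out the $\sigma_1$/$\omega_1$ comparison on the monomial bases $a_{\beta_N}^{m_N}\cdots a_{\beta_1}^{m_1}k_\lambda S(b_{\beta_N}^{m'_N}\cdots b_{\beta_1}^{m'_1})$ and evaluating at a point of $K$, but until that computation (or an equivalent factorization argument) is actually done, the proof is not complete.
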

\begin{proof}
For the convenience of the readers we give a sketch of the proof using the bilinear forms $\omega_1$ ant $\theta_1$.
First note that
\[
Z_{\Har}(U_1)\subset{}^fU_1\subset{}^eU_1\subset U_1.
\]
Via $\omega_1:{}^eU_1\times\dot{U}_1\to\BC$ we obtain embeddings ${}^fU_1\subset{}^eU_1\subset(\dot{U}_1)^*$.
Identifying $\dot{U}_1$ with the modified enveloping algebra of $\Lie(G)$ we have ${}^fU_1\cong \BC[G]$ (see \cite{C}).
On the other hand we see from $\dot{U}_1\cong \bigoplus_{\lambda\in P}U_1^{L,-}1_\lambda U_1^{L,+}$ that 
${}^eU_1$ is identified with $\BC[N^-\times H\times N^+]$.
Consequently we obtain a sequence 
\[
\BC[H/W]\to\BC[G]\to\BC[N^-\times H\times N^+]\to\BC[K]
\]
of algebra embeddings.
We can easily check that the corresponding morphisms of algebraic varieties are given by 
\begin{align*}
&K\ni(g_+g_0,g_-g_0^{-1})\mapsto (g_-,g_0^{-2},g_+^{-1})\in N^-\times H\times N^+
\quad(g_\pm\in N^\pm, g_0\in H),\\
&N^-\times H\times N^+\ni(x_-,x_0,x_+)\mapsto x_-x_0x_+\in G,\\
&G\ni g\mapsto\upsilon(g)^{-1}\in H/W.
\end{align*}
\end{proof}

\section{Frobenius center}
\subsection{}
Fix a positive integer $\ell$.
If $\ell$ is odd (resp.\ even), then we set $r=\ell$ (resp.\ $r=\ell/2$).
Note that $r$ is the order of $\zeta^2$.
We assume 
\begin{equation}
r>d
\end{equation}
in the following.
We take $\zeta\in\BC$ to be a primitive $\ell$-th root of 1.
Define
$\zeta_\beta\;(\beta\in\Delta)$,
$\zeta_i\;(i\in I)$
as in \eqref{eq:kappa-beta} for $z=\zeta$.
For $\beta\in\Delta$
we denote the orders of 
$\zeta_\beta, \zeta_\beta^2$ by $\ell_\beta, r_\beta$ respectively.
For $i\in I$ we set
$\ell_i=\ell_{\alpha_i}$, $r_i=r_{\alpha_i}$.

\subsection{}
For $\alpha\in\Delta$ set $\alpha'=r_\alpha\alpha\in\Gh_\BQ^*$.
Then ${\Delta}'=\{r_\alpha\alpha\mid\alpha\in\Delta\}$ is a root system with $\{\alpha_i'\mid i\in I\}$ a set of simple roots.
Note that as an abstract root system (disregarding the inner product) we have $\Delta'\cong\Delta$ or $\Delta'\cong\Delta^\vee$.
Set
\begin{align*}
{P}'=
\{\mu\in \Gh_\BQ^*\mid (\mu,\alpha^\vee)\in r_\alpha\BZ\quad(\forall\alpha\in\Delta)\}.\end{align*}
Then $P'$ is the weight lattice for $\Delta'$, and we have
$P'\subset P$.

Set
\begin{equation}
\varepsilon=
\zeta_\alpha^{r_\alpha^2}\qquad
(\alpha\in\Delta,\;\alpha'\in(\Delta')_{\sh}).
\end{equation}
Then we have $\varepsilon=\pm1$.
Furthermore, $\varepsilon=-1$ if and only if we have either 
\begin{itemize}
\item[(a)]
$r$ is odd and $\ell=2r$,
\end{itemize}
or
\begin{itemize}
\item[(b)]
$d=2$, $r$ is even with $r/2$ odd.
\end{itemize}
Set
\[
\varepsilon_{\alpha'}=\varepsilon^{(\alpha',\alpha')/(\beta',\beta')}\qquad
(\alpha'\in\Delta',\quad\beta'\in(\Delta')_{\sh}).
\]
Then we have 
\begin{equation}
\varepsilon_{\alpha'}=\zeta_\alpha^{r_\alpha^2}\qquad(\alpha\in \Delta).
\end{equation}

An explicit description of $(\Delta', \varepsilon)$ in each case is given in Table \ref{tab}.

\begin{table}[h]
\begin{center}
\caption{}
\label{tab}
\begin{tabular}{|l|c|c|c|r|}
\hline
\quad\;\; type of $\Delta$&$\ell$&$r$&$\Delta'$&$\varepsilon\;\,$\\
\hline
\multirow{3}{3.3cm}{$A_n, D_n, E_6, E_7, E_8$}&$\ell\in2\BZ+1$&$\ell$&$r\Delta$&$1$\\
\cline{2-5}
&$\ell\in4\BZ$&${\ell}/2$&$r\Delta$&$1$\\
\cline{2-5}
&$\ell\in4\BZ+2$&${\ell}/2$&$r\Delta$&$-1$\\
\hline
\multirow{4}{1pt}{$\qquad B_n, C_n, F_4$}&$\ell\in2\BZ+1$&$\ell$&$r\Delta$&$1$\\
\cline{2-5}
&$\ell\in4\BZ+2$&${\ell}/2$&$r\Delta$&$-1$\\
\cline{2-5}
&$\ell\in8\BZ$&${\ell}/2$&$\frac{r}2(2\Delta_{\sh}\sqcup\Delta_{\lo})$&$1$\\[1pt]
\cline{2-5}
&$\ell\in8\BZ+4$&${\ell}/2$&$\frac{r}2(2\Delta_{\sh}\sqcup\Delta_{\lo})$&$-1$\\[1pt]
\hline
\multirow{6}{1pt}{$\qquad\quad\;\; G_2$}&$\ell\in6\BZ\pm1$&$\ell$&$r\Delta$&$1$\\
\cline{2-5}
&$\ell\in6\BZ+3$&${\ell}$&$\frac{r}3(3\Delta_{\sh}\sqcup\Delta_{\lo})$&$1$\\[1pt]
\cline{2-5}
&$\ell\in12\BZ$&${\ell}/2$&$\frac{r}3(3\Delta_{\sh}\sqcup\Delta_{\lo})$&$1$\\[1pt]
\cline{2-5}
&$\ell\in12\BZ\pm4$&${\ell}/2$&$r\Delta$&$1$\\
\cline{2-5}
&$\ell\in12\BZ+6$&${\ell}/2$&$\frac{r}3(3\Delta_{\sh}\sqcup\Delta_{\lo})$&$-1$\\[1pt]
\cline{2-5}
&$\ell\in12\BZ\pm2$&${\ell}/2$&$r\Delta$&$-1$\\
\hline
\end{tabular}
\end{center}
\end{table}

\subsection{}
Similarly to the Frobenius homomorphism
\begin{equation}
\label{eq:Fr}
\Fr:\dot{U}_\zeta(\Delta)\to\dot{U}_{\varepsilon}(\Delta')
\end{equation}
given in \cite[Theorem 35.1.9]{Lbook} we can define an algebra homomorphism
\begin{equation}
\xi:\dot{V}_\zeta(\Delta)\to\dot{V}_{\varepsilon}(\Delta')
\end{equation}
such that 
\begin{itemize}
\item 
for $\lambda\notin P'$ we have
$\xi(x_i^{(n)}1_\lambda)=\xi(y_i^{(n)}1_\lambda)=0$\quad($i\in I, n\in\BZ_{\geqq0}$),
\item 
for $\lambda\in P'$ we have
\begin{align*}
\xi(x_i^{(n)}1_\lambda)&=
\begin{cases}
x_i^{(n/r_i)}1_\lambda\qquad&(r_i| n)\\
0\qquad&(\text{otherwise}),\\
\end{cases}
\\
\xi(y_i^{(n)}1_\lambda)&=
\begin{cases}
y_i^{(n/r_i)}1_\lambda\qquad&(r_i| n)\\
0\qquad&(\text{otherwise}).
\end{cases}
\end{align*}
\end{itemize}
The fact that $\xi$ is well-defined follows easily from the corresponding fact for 
$
\Fr
$.
Moreover, for
$\lambda\in P'$ and $\beta\in\Delta^+$
\begin{align*}
\xi(x_\beta^{(n)}1_\lambda)&=
\begin{cases}
x_{\beta'}^{(n/r_\beta)}1_\lambda\qquad&(r_\beta| n)\\
0\qquad&(\text{otherwise}),\\
\end{cases}
\\
\xi(y_\beta^{(n)}1_\lambda)&=
\begin{cases}
y_{\beta'}^{(n/r_\beta)}1_\lambda\qquad&(r_\beta| n)\\
0\qquad&(\text{otherwise})
\end{cases}
\end{align*}
by
\cite[41.1.9]{Lbook}.
\begin{proposition}
There exists uniquely an injective homomorphism 
\[
{}^t\xi:U_\varepsilon(\Delta')\to U_\zeta(\Delta)
\]
of coalgebras satisfying
\begin{equation}
\label{eq:txi1}
\sigma_\zeta({}^t\xi(u),v)=
\sigma_\varepsilon(u,\xi(v))
\qquad
(u\in U_\varepsilon(\Delta'), v\in\dot{V}_\zeta(\Delta)).
\end{equation}
Moreover, we have
\begin{align}
\label{eq:txi2}
&{}^t\xi(a_{\beta'_N}^{n_N}\cdots a_{\beta'_1}^{n_1}
k_\mu
S(b_{\beta'_N}^{n'_N}\cdots b_{\beta'_1}^{n'_1}))\\
\nonumber
=&
c_{\beta_1}^{n_1+n'_1}\cdots
c_{\beta_N}^{n_N+n'_N}
a_{\beta_N}^{r_{\beta_N}n_N}\cdots a_{\beta_1}^{r_{\beta_1}n_1}
k_{\mu}
S(b_{\beta_N}^{r_{\beta_N}n'_N}\cdots b_{\beta_1}^{r_{\beta_1}n'_1})\\
\nonumber
&\hspace{3cm}
(\mu\in P', n_1,\dots, n_N, n'_1,\dots, n'_N\in\BZ_{\geqq0}),
\end{align}
where
\begin{align*}
c_\beta
&=
(-1)^{r_\beta+1}
\zeta_{\beta}^{-r_\beta(r_\beta-1)/2}
\qquad(\beta\in\Delta^+).
\end{align*}
\end{proposition}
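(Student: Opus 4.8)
The plan is to construct ${}^t\xi$ as the transpose of $\xi$ with respect to the perfect pairing $\sigma_\zeta$, and then to verify the explicit formula \eqref{eq:txi2} by direct computation on the PBW bases. First I would note that $\sigma_\zeta:U_\zeta(\Delta)\times\dot{V}_\zeta(\Delta)\to\BC$ is perfect (by \eqref{eq:Drinfeld-value} and the discussion following it) and similarly $\sigma_\varepsilon:U_\varepsilon(\Delta')\times\dot{V}_\varepsilon(\Delta')\to\BC$ is perfect. Since $\xi:\dot{V}_\zeta(\Delta)\to\dot{V}_\varepsilon(\Delta')$ is a given linear map, the condition \eqref{eq:txi1} determines ${}^t\xi(u)$ uniquely as the unique element of $U_\zeta(\Delta)$ pairing with $v$ against $\sigma_\zeta$ the way $u$ pairs with $\xi(v)$ against $\sigma_\varepsilon$; this uses that $\xi$ takes values in the finite-dimensional weight-graded pieces so that the relevant functionals are continuous/represented. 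Thus existence and uniqueness of the linear map ${}^t\xi$ are formal; that it is a homomorphism of coalgebras follows by dualizing the fact (Proposition \ref{prop:sigma-inv}) that $\sigma$ is multiplicative in the second variable — since $\xi$ is an algebra homomorphism, ${}^t\xi$ is automatically a coalgebra homomorphism. Injectivity of ${}^t\xi$ follows from surjectivity of $\xi$ onto a spanning set, which in turn follows from the explicit formulas for $\xi$ on the PBW-type elements $x_\beta^{(n)}1_\lambda$, $y_\beta^{(n)}1_\lambda$.

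Next I would prove the explicit formula \eqref{eq:txi2}. The natural strategy is to pair both sides against a PBW basis of $\dot{V}_\zeta(\Delta)$ of the form $y_{\beta_1}^{(p_1)}\cdots y_{\beta_N}^{(p_N)}\, x_{\beta_N}^{(p'_N)}\cdots x_{\beta_1}^{(p'_1)}1_\lambda$ and use \eqref{eq:txi1} together with the definition of $\sigma$ in terms of the Drinfeld pairing $\tau$. Applying $\xi$ to such an element gives (using the displayed formulas for $\xi$ on $x_\beta^{(n)}1_\lambda$ and $y_\beta^{(n)}1_\lambda$, and the factorization $\dot{V}\cong\bigoplus_\lambda V^-\otimes V^+$) either $0$ unless $\lambda\in P'$ and every $p_s,p'_s$ is divisible by the appropriate $r_{\beta_s}$, in which case one gets the corresponding PBW element in $\dot{V}_\varepsilon(\Delta')$ with exponents divided by $r_{\beta_s}$. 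Then \eqref{eq:txi1} reduces the whole identity to comparing two instances of the Drinfeld pairing: the value $\tau_\zeta(a_{\beta_N}^{r_{\beta_N}m_N}\cdots a_{\beta_1}^{r_{\beta_1}m_1},\ f_{\beta_N}^{(r_{\beta_N}n_N)}\cdots)$ computed in $U_\zeta(\Delta)$ versus $\tau_\varepsilon(a_{\beta'_N}^{m_N}\cdots,\ f_{\beta'_N}^{(n_N)}\cdots)$ computed in $U_\varepsilon(\Delta')$, using the closed form \eqref{eq:Drinfeld-value}. Matching these two products of $\delta$-symbols and powers of $\zeta_\beta$ versus $\varepsilon_{\beta'}$ is exactly where the constants $c_\beta=(-1)^{r_\beta+1}\zeta_\beta^{-r_\beta(r_\beta-1)/2}$ appear: one computes $(-1)^{r_\beta m}\zeta_\beta^{r_\beta m(r_\beta m-1)/2}$ and separates out the part that becomes $(-1)^m\varepsilon_{\beta'}^{m(m-1)/2}$, the leftover being $c_\beta^{m}$ (up to the care needed because the exponent-halving and the sign interact with the parity of $r_\beta$). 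One must also track that the $k_\mu$ part is matched: for $\mu\in P'$ the Cartan contribution $\tau(k_i,k_j)$-type factors in $\sigma$ restrict correctly, which is consistent with the compatibility of $\xi$ with the grading by $P'$.

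The main obstacle I anticipate is the bookkeeping of signs and powers of $\zeta_\beta$ in the comparison of Drinfeld pairing values — in particular, correctly isolating $c_\beta$ from $(-1)^{r_\beta m}\zeta_\beta^{r_\beta m(r_\beta m-1)/2}$ so that what remains is precisely $(-1)^m \varepsilon_{\beta'}^{m(m-1)/2}$, which is the value of $\tau_\varepsilon$ at the corresponding primed generators. This requires using $\varepsilon_{\beta'}=\zeta_\beta^{r_\beta^2}$ and a small lemma on the arithmetic identity $r_\beta m(r_\beta m -1)/2 \equiv r_\beta^2\binom{m}{2}\cdot(\text{something}) + \binom{r_\beta}{2}m \pmod{\ell_\beta}$ in the exponent group, handling separately the cases $r_\beta$ even and $r_\beta$ odd (and the $\varepsilon=-1$ cases (a), (b)). A secondary, more structural point is verifying that ${}^t\xi$ genuinely lands in $U_\zeta(\Delta)$ rather than some completion: this is handled by the explicit PBW formula itself, since \eqref{eq:txi2} exhibits ${}^t\xi$ on a spanning set of $U_\varepsilon(\Delta')$ with values manifestly in $U_\zeta(\Delta)$, so once the formula is checked on the spanning set, well-definedness, the coalgebra property, and injectivity all follow. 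I would also remark that $\xi$ restricts to the $\BA$-forms, so that ${}^t\xi$ is itself defined over $\BA$ and the above is a specialization of an integral statement, which streamlines the verification of \eqref{eq:Drinfeld-value}-type identities.
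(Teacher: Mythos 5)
Your proposal is correct and follows essentially the same route as the paper: define ${}^t\xi$ as the transpose of $\xi$ with respect to the perfect pairings $\sigma_\zeta$, $\sigma_\varepsilon$, read off the explicit formula \eqref{eq:txi2} from the Drinfeld pairing values \eqref{eq:Drinfeld-value}, and deduce the coalgebra property from Proposition \ref{prop:sigma-inv}. The constant bookkeeping you worry about is in fact a one-line computation, $(-1)^{m}\zeta_\beta^{r_\beta^2 m(m-1)/2}\big/\bigl((-1)^{r_\beta m}\zeta_\beta^{r_\beta m(r_\beta m-1)/2}\bigr)=\bigl((-1)^{r_\beta+1}\zeta_\beta^{-r_\beta(r_\beta-1)/2}\bigr)^m=c_\beta^{\,m}$, so no separate parity case analysis is needed.
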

\begin{proof}
It is easily seen from \eqref{eq:Drinfeld-value} that there exists uniquely a linear map
$
{}^t\xi:U_\varepsilon(\Delta')\to U_\zeta(\Delta)
$
satisfying \eqref{eq:txi1}, and it is given by \eqref{eq:txi2}.
Then we conclude from Proposition \ref{prop:sigma-inv}  that 
${}^t\xi$ is a homomorphism of coalgebras.
\end{proof}
Similarly we have the following.
\begin{proposition}
\label{prop:omagaA}
We have 
${}^t\xi({}^eU_\varepsilon(\Delta'))\subset{}^eU_\zeta(\Delta)$, and
\[
\omega_\zeta({}^t\xi(u),x)=
\omega_\varepsilon(u,\Fr(x))
\qquad
(u\in {}^eU_\varepsilon(\Delta'), x\in\dot{U}_\zeta(\Delta)).
\]
\end{proposition}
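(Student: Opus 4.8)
The plan is to establish the two assertions in parallel with the proof of the previous proposition, treating $\omega$ exactly as $\sigma$ was treated, and then to verify the containment ${}^t\xi({}^eU_\varepsilon(\Delta'))\subset{}^eU_\zeta(\Delta)$ by an explicit formula analogous to \eqref{eq:txi2}. First I would recall that ${}^eU$ is spanned over the base ring by the elements $u_+k_{2\mu}(Su_-)$ with $u_\pm$ running over PBW monomials in $U^\pm$ and $\mu\in P$, and that $\dot U$ is spanned by $w_-1_\lambda(Sw_+)$ with $w_\pm$ PBW monomials and $\lambda\in P$. Since $\omega_\varepsilon$ and $\omega_\zeta$ are perfect (by \eqref{eq:Drinfeld-value}, after passing through $\BA$ as in the construction of $\omega_\BA$), the defining relation
\[
\omega_\zeta({}^t\xi(u),x)=\omega_\varepsilon(u,\Fr(x))
\qquad(u\in{}^eU_\varepsilon(\Delta'),\;x\in\dot U_\zeta(\Delta))
\]
determines at most one linear map ${}^t\xi$ from ${}^eU_\varepsilon(\Delta')$ into the linear dual of $\dot U_\zeta(\Delta)$; the point is to show that the resulting functional actually lands inside ${}^eU_\zeta(\Delta)\subset(\dot U_\zeta(\Delta))^*$, and that it agrees with the restriction of the map ${}^t\xi$ already constructed on $U_\varepsilon(\Delta')$.

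The key computation is the following. Expand $\omega_\varepsilon$ on $\dot U_\varepsilon(\Delta')$ using its defining formula: for $u_\pm\in U^\pm_\varepsilon(\Delta')$, $w_\pm\in U^\pm_\zeta(\Delta)$, $\mu\in P'$, $\lambda\in P$,
\[
\omega_\varepsilon\bigl(u_+k_{2\mu}(Su_-),\,\Fr(w_-1_\lambda(Sw_+))\bigr)
=\tau\bigl(u_+,\Fr^-(w_-)\bigr)\,\delta_{\lambda,-\mu}\,\tau\bigl(\Fr^+(w_+),u_-\bigr),
\]
where $\Fr$ on the $\pm$-parts sends a PBW monomial $b_{\beta_N}^{r_{\beta_N}n'_N}\cdots b_{\beta_1}^{r_{\beta_1}n'_1}$ (resp.\ the $a$-version) to the corresponding monomial over $\Delta'$ up to the explicit scalar built from the $c_\beta$, by \cite[41.1.9]{Lbook} and \eqref{eq:Drinfeld-value}, and kills monomials not of this "divisible" type, as well as all weight components with $\lambda\notin P'$. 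Comparing this with the defining relation for $\omega_\zeta$ forces ${}^t\xi$ on ${}^eU_\varepsilon(\Delta')$ to be given by precisely formula \eqref{eq:txi2} with $S(b_{\beta'_\bullet})$ replaced by $k_{2\mu}$-style terms — i.e.\ the same scaled monomial substitution $a_{\beta'}\mapsto c_\beta^{\,\bullet}a_\beta^{r_\beta}$, $Sb_{\beta'}\mapsto c_\beta^{\,\bullet}Sb_\beta^{r_\beta}$, $k_{2\mu}\mapsto k_{2\mu}$ for $\mu\in P'$. In particular the image of an element $u_+k_{2\mu}(Su_-)\in{}^eU_\varepsilon(\Delta')$ is visibly of the form (monomial in $U^+_\zeta$)$\cdot k_{2\mu}\cdot S$(monomial in $U^-_\zeta$), hence lies in ${}^eU_\zeta(\Delta)=\sum_{\lambda\in P}U^+_\zeta k_{2\lambda}(SU^-_\zeta)$. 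This simultaneously gives the containment ${}^t\xi({}^eU_\varepsilon(\Delta'))\subset{}^eU_\zeta(\Delta)$, the existence of the map, and (by perfectness of $\omega_\zeta$) the displayed adjunction identity.

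The main obstacle is the bookkeeping of scalars: one must check that the scalar produced by $\Fr$ on the PBW monomials of $U^\pm$ (from \cite[41.1.9]{Lbook}) is compatible, via $\tau$ and \eqref{eq:Drinfeld-value}, with the scalar $c_\beta^{\,n_\beta+n'_\beta}$ appearing in \eqref{eq:txi2}, and in particular that the power of $\zeta_\beta^{r_\beta(r_\beta-1)/2}$ coming from $[r_\beta n]_{\zeta_\beta}!$ versus $[n]_{\varepsilon_{\alpha'}}!$ matches the corresponding power from the target pairing over $\Delta'$; this is the analogue, for $\omega$, of the scalar computation already carried out for $\sigma$ in the preceding proposition, and it reduces to the identity $\varepsilon_{\alpha'}=\zeta_\alpha^{r_\alpha^2}$ together with the standard $q$-binomial congruences underlying the Frobenius homomorphism. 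Granting that, the remaining verification that ${}^t\xi$ restricted to $U_\varepsilon(\Delta')\subset{}^eU_\varepsilon(\Delta')$ coincides with the map of the previous proposition is immediate, since both are characterized by adjunction against the same Drinfeld-pairing data. Finally, the injectivity of ${}^t\xi$ on ${}^eU_\varepsilon(\Delta')$ follows from injectivity of $\Fr$ on $\dot U_\zeta(\Delta)$ together with perfectness of $\omega_\varepsilon$, so no separate argument is needed.
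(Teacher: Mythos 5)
Your proposal is correct and takes essentially the same route as the paper, which proves this proposition "similarly" to the preceding one: a direct verification on PBW monomials using the explicit formula \eqref{eq:txi2}, the pairing values \eqref{eq:Drinfeld-value}, and the action of $\Fr$ on divided powers from \cite[41.1.9]{Lbook}, with the containment ${}^t\xi({}^eU_\varepsilon(\Delta'))\subset{}^eU_\zeta(\Delta)$ read off from \eqref{eq:txi2}. Two harmless slips worth noting: $\Fr$ acts on the divided-power monomials of $\dot{U}_\zeta(\Delta)$ (not on $b$-monomials of the De Concini--Kac form) and introduces no scalars---the $c_\beta$'s enter only through \eqref{eq:txi2}, exactly as your "main obstacle" paragraph then correctly describes---and your closing remark on injectivity should invoke surjectivity of $\Fr$ together with perfectness of $\omega_\varepsilon$ rather than "injectivity of $\Fr$" (in any case injectivity is not part of the statement).
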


\subsection{}
For
$\beta\in\Delta$ we set
$\eta_\beta=\zeta_\beta^{r_\beta}$.
We have $\eta_\beta=\pm1$, and $\eta_\beta=-1$ if and only $\ell_\beta$ is even.
\begin{proposition}[De Concini-Kac \cite{DK}]
\label{prop:rel-zeta}
For 
$\alpha, \beta\in\Delta^+$, $\lambda\in P$, $\mu\in P'$
we have
\begin{align*}
&a_{\alpha}^{r_\alpha}a_{\beta}=
\eta_\alpha^{(\alpha^\vee,\beta)}
a_{\beta}a_{\alpha}^{r_\alpha},\qquad
(Sb_{\alpha}^{r_\alpha})(Sb_{\beta})=
\eta_\alpha^{(\alpha^\vee,\beta)}
(Sb_{\beta})(Sb_{\alpha}^{r_\alpha}),\\
&a_{\alpha}^{r_\alpha}(Sb_{\beta})=
\eta_\alpha^{(\alpha^\vee,\beta)}
(Sb_{\beta})a_{\alpha}^{r_\alpha},\qquad
(Sb_{\alpha}^{r_\alpha})a_{\beta}=
\eta_\alpha^{(\alpha^\vee,\beta)}
a_{\beta}(Sb_{\alpha}^{r_\alpha}),\\
&k_\lambda a_\alpha^{r_\alpha}
=\eta_\alpha^{(\lambda,\alpha^\vee)}
a_\alpha^{r_\alpha}k_\lambda,\qquad
k_\lambda (Sb_\alpha^{r_\alpha})
=\eta_\alpha^{(\lambda,\alpha^\vee)}
(Sb_\alpha^{r_\alpha})k_\lambda,\\
&k_{\mu}a_\alpha
=\eta_\alpha^{(\mu,\alpha^\vee)/r_\alpha}
a_\alpha k_{\mu}
,\qquad
k_{\mu} (Sb_\alpha)
=\eta_\alpha^{(\mu,\alpha^\vee)/r_\alpha}
(Sb_\alpha)k_{\mu}
\end{align*}
in $U_\zeta(\Delta)$.
\end{proposition}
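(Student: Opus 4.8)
\emph{Strategy.} The eight identities split into two kinds. Those involving $k_\lambda$ ($\lambda\in P$) and $k_\mu$ ($\mu\in P'$) are ``toral'': they merely record how the Cartan part conjugates powers of root vectors, and are elementary. The four skew-commutations among the $a_\beta$'s and $b_\beta$'s are the substance; these are essentially the root-of-unity commutation relations of De Concini--Kac \cite{DK}, and I would either invoke \cite{DK} or reprove them as sketched below.

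\emph{Toral relations.} In $U$ the element $a_\beta\in U^+$ has weight $\beta$, so $k_\lambda a_\beta k_\lambda^{-1}=q_\beta^{(\lambda,\beta^\vee)}a_\beta$ for $\lambda\in P$, and since $Sf_\beta$ has weight $-\beta$ also $k_\lambda(Sb_\beta)k_\lambda^{-1}=q_\beta^{-(\lambda,\beta^\vee)}(Sb_\beta)$; both descend to $U_\BA$ and hence to $U_\zeta$. Specializing at $q=\zeta$ and taking $r_\alpha$-th powers,
\[
k_\lambda a_\alpha^{r_\alpha}k_\lambda^{-1}=\zeta_\alpha^{r_\alpha(\lambda,\alpha^\vee)}a_\alpha^{r_\alpha}=(\zeta_\alpha^{r_\alpha})^{(\lambda,\alpha^\vee)}a_\alpha^{r_\alpha}=\eta_\alpha^{(\lambda,\alpha^\vee)}a_\alpha^{r_\alpha},
\]
and similarly for $Sb_\alpha^{r_\alpha}$ (using $\eta_\alpha^{-1}=\eta_\alpha$). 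For the $k_\mu$ relations one uses that $\mu\in P'$ forces $(\mu,\alpha^\vee)\in r_\alpha\BZ$, so that even before taking powers $\zeta_\alpha^{(\mu,\alpha^\vee)}=(\zeta_\alpha^{r_\alpha})^{(\mu,\alpha^\vee)/r_\alpha}=\eta_\alpha^{(\mu,\alpha^\vee)/r_\alpha}$.

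\emph{Root relations.} Fix the convex order $\beta_1<\dots<\beta_N$ of $\Delta^+$ attached to the chosen reduced word, and normalize $(\,,\,)$ so that short roots have square length $2$. The main input is the Levendorskii--Soibelman straightening rule in $U_\BA$: for $i<j$,
\[
a_{\beta_j}a_{\beta_i}-q^{-(\beta_i,\beta_j)}a_{\beta_i}a_{\beta_j}\in\sum_{\mathbf{m}}\BA\,a_{\beta_{i+1}}^{m_{i+1}}\cdots a_{\beta_{j-1}}^{m_{j-1}},
\]
the sum over $\mathbf{m}$ with $\sum_{i<k<j}m_k\beta_k=\beta_i+\beta_j$ --- together with its analogues for the $b_\beta$'s and for the mixed products $a_\alpha\cdot(Sb_\beta)$ inside the subalgebra ${}^eU$, for which I would write the commutator out via the Drinfeld pairing $\tau$, in the spirit of this paper. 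These rules are proved in \cite{DK}, \cite{LS} by induction on the convex order using the $T_i$. One then shows, by induction on $m$ and iterated use of the straightening rule, that
\[
a_{\beta_i}^{m}a_{\beta_j}=q^{\,m(\beta_i,\beta_j)}a_{\beta_j}a_{\beta_i}^{m}+(\text{error}),
\]
the error being an $\BA$-combination of PBW monomials in the intermediate root vectors times lower powers of $a_{\beta_i}$. Pushing the recursion to $m=r_{\beta_i}$, the combinatorial heart of \cite{DK} is that every surviving error coefficient becomes divisible in $\BA$ by an element vanishing under $q\mapsto\zeta$ --- the relevant geometric sums collapse precisely because $r_{\beta_i}$ is the order of $\zeta_{\beta_i}^2$. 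Hence over $\BC$ all corrections die, and only the scalar remains, which by $\zeta_{\beta_i}^{r_{\beta_i}}=\eta_{\beta_i}$ equals $\eta_{\beta_i}^{(\beta_i^\vee,\beta_j)}$; this is the first relation. The case $i>j$ follows by reading the relation the other way, general $\alpha,\beta$ by convexity, and the remaining three relations by the identical computation in $U_\BA^-$ and in ${}^eU_\BA$ (or by transporting the first through the antipode $S$ and the automorphism of $U$ swapping $e_i\leftrightarrow f_i$).

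\emph{Main obstacle.} The real work is the bookkeeping in the power induction and, above all, controlling the residual sign in the even case (some $\eta_\beta=-1$): one must check that the vanishing at $\zeta$ of the error coefficients holds for every ordered pair of roots --- not only the simple or adjacent ones, where several intermediate monomials appear and the pertinent exponents involve $(\beta_i,\beta_j)$, which need not be even --- and one must treat separately the degenerate branch in which the relevant cyclotomic sum does not collapse, where it contributes the expected $\pm1$ rather than $0$. This case-by-case verification, carried out in \cite{DK} for odd $\ell$ and extended to even $\ell$ here, is the heart of the matter.
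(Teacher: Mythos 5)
Your proposal is correct and takes essentially the same route as the paper: the paper offers no proof of this proposition at all, attributing it to De Concini--Kac \cite{DK}, and your argument -- a direct weight computation for the four relations involving $k_\lambda$ and $k_\mu$ (where $\mu\in P'$ makes $(\mu,\alpha^\vee)/r_\alpha$ integral), together with an appeal to \cite{DK} (via the Levendorskii--Soibelman straightening induction and vanishing of the error coefficients at $q=\zeta$) for the skew-commutations among the root vectors -- is exactly that reduction, with the scalar bookkeeping $\zeta_\alpha^{r_\alpha(\alpha^\vee,\beta)}=\eta_\alpha^{(\alpha^\vee,\beta)}$ done correctly.
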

\begin{proposition}
\label{prop:rel-epsilon}
For
$\alpha', \beta'\in(\Delta')^+$, $\mu\in P'$ 
we have
\begin{align*}
&a_{\alpha'}a_{\beta'}=
\varepsilon_{\alpha'}^{((\alpha')^\vee,\beta')}
a_{\beta'}a_{\alpha'},\qquad
(Sb_{\alpha'})(Sb_{\beta'})=
\varepsilon_{\alpha'}^{((\alpha')^\vee,\beta')}
(Sb_{\beta'})(Sb_{\alpha'}),\\
&a_{\alpha'}(Sb_{\beta'})=
\varepsilon_{\alpha'}^{((\alpha')^\vee,\beta')}
(Sb_{\beta'})a_{\alpha'},\qquad
(Sb_{\alpha'})a_{\beta'}=
\varepsilon_{\alpha'}^{((\alpha')^\vee,\beta')}
a_{\beta'}(Sb_{\alpha'}),\\
&k_{\mu}a_{\alpha'}
=\varepsilon_{\alpha'}^{(\mu,(\alpha')^\vee)}
a_{\alpha'} k_{\mu}
,\qquad
k_{\mu} (Sb_{\alpha'})
=\varepsilon_{\alpha'}^{(\mu,(\alpha')^\vee)}
(Sb_{\alpha'})k_{\mu}
\end{align*}
in $U_\varepsilon(\Delta')$.
\end{proposition}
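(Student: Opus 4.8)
The plan is to derive Proposition \ref{prop:rel-epsilon} directly from Proposition \ref{prop:rel-zeta} by transporting the relations through the map $\xi$ (or rather its ``transpose'' ${}^t\xi$). The key observation is that ${}^t\xi$ sends the generators $a_{\beta'},\;Sb_{\beta'},\;k_\mu$ of the relevant subalgebra of $U_\varepsilon(\Delta')$ to scalar multiples of $a_\beta^{r_\beta},\;Sb_\beta^{r_\beta},\;k_\mu$ in $U_\zeta(\Delta)$ by formula \eqref{eq:txi2} (taking all exponents equal to $0$ or $1$). So first I would record that, for $\mu\in P'$, ${}^t\xi(a_{\alpha'})=c_{\alpha}a_\alpha^{r_\alpha}$, ${}^t\xi(Sb_{\alpha'})=c_\alpha Sb_\alpha^{r_\alpha}$ and ${}^t\xi(k_\mu)=k_\mu$, where the scalars $c_\alpha$ are the units introduced in the previous proposition.

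Next I would establish that ${}^t\xi$ is injective on the subalgebra generated by these elements — this we already have, since ${}^t\xi$ is stated to be an injective coalgebra homomorphism, and we only need injectivity as a linear map on the span of the monomials appearing. Then I would simply apply ${}^t\xi$ to each desired relation in $U_\varepsilon(\Delta')$, using that ${}^t\xi$ of a product of these specific elements is (up to the explicit scalar, which cancels from both sides of a commutation relation since it is a unit) the product of their images: for instance, applying ${}^t\xi$ to $a_{\alpha'}a_{\beta'}$ versus $\varepsilon_{\alpha'}^{((\alpha')^\vee,\beta')}a_{\beta'}a_{\alpha'}$ gives, after cancelling the unit $c_\alpha c_\beta$, the identity $a_\alpha^{r_\alpha}a_\beta^{r_\beta}=\varepsilon_{\alpha'}^{((\alpha')^\vee,\beta')}a_\beta^{r_\beta}a_\alpha^{r_\alpha}$ in $U_\zeta(\Delta)$, which by injectivity of ${}^t\xi$ is equivalent to the relation in $U_\varepsilon(\Delta')$. (Strictly, one must be slightly careful: ${}^t\xi$ is only a coalgebra map, not an algebra map, so I would instead argue that ${}^t\xi$ restricted to this commutative-up-to-scalar subalgebra is multiplicative on the chosen PBW-type monomials — which is exactly what \eqref{eq:txi2} asserts — and that suffices.)

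The final step is to verify that the exponents match: one must check that iterating the relations of Proposition \ref{prop:rel-zeta} yields $a_\alpha^{r_\alpha}a_\beta^{r_\beta}=\eta_\alpha^{r_\beta(\alpha^\vee,\beta)}a_\beta^{r_\beta}a_\alpha^{r_\alpha}$, and then that $\eta_\alpha^{r_\beta(\alpha^\vee,\beta)}=\varepsilon_{\alpha'}^{((\alpha')^\vee,\beta')}$. The latter is a bookkeeping identity relating $\eta_\alpha=\zeta_\alpha^{r_\alpha}$, the pairing $(\alpha^\vee,\beta)$ on $\Delta$, and the pairing $((\alpha')^\vee,\beta')$ on $\Delta'$ together with $\varepsilon_{\alpha'}=\zeta_\alpha^{r_\alpha^2}$; using $\alpha'=r_\alpha\alpha$, $\beta'=r_\beta\beta$ and hence $(\alpha')^\vee = r_\alpha^{-1}\alpha^\vee$, one computes $((\alpha')^\vee,\beta') = r_\beta(\alpha^\vee,\beta)$ and $\varepsilon_{\alpha'}^{((\alpha')^\vee,\beta')} = \zeta_\alpha^{r_\alpha^2 r_\beta(\alpha^\vee,\beta)} = (\zeta_\alpha^{r_\alpha})^{r_\alpha r_\beta(\alpha^\vee,\beta)}$, which must be reconciled with $\eta_\alpha^{r_\beta(\alpha^\vee,\beta)}$ modulo the order of $\eta_\alpha$ (which is $1$ or $2$). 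I expect this exponent-matching — keeping track of which $r$'s appear and checking the parity identities in all the Dynkin types, including the short-versus-long subtleties governing whether $\Delta'=\Delta$ or $\Delta'=\Delta^\vee$ — to be the one genuinely fiddly part; the structural transport of relations via ${}^t\xi$ is routine. The $k_\mu$ relations follow the same pattern, now comparing the last line of Proposition \ref{prop:rel-zeta} (for $\mu\in P'$) with the last line of the claim, using ${}^t\xi(k_\mu)=k_\mu$ and ${}^t\xi(a_{\alpha'})=c_\alpha a_\alpha^{r_\alpha}$.
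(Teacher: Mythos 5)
Your route---transporting the relations of Proposition \ref{prop:rel-zeta} into $U_\varepsilon(\Delta')$ through ${}^t\xi$---is genuinely different from the paper's proof, but as written it is circular. At this stage ${}^t\xi$ is only known to be an injective \emph{coalgebra} map, and \eqref{eq:txi2} determines its values only on the normal-ordered monomials $a_{\beta'_N}^{n_N}\cdots a_{\beta'_1}^{n_1}k_\mu S(b_{\beta'_N}^{n'_N}\cdots b_{\beta'_1}^{n'_1})$. In each relation you want to transport, exactly one of the two sides is such a monomial: for $\alpha'=\beta'_j$, $\beta'=\beta'_k$ with $j>k$, the product $a_{\alpha'}a_{\beta'}$ is normal-ordered but $a_{\beta'}a_{\alpha'}$ is not (similarly $a_{\alpha'}(Sb_{\beta'})$ versus $(Sb_{\beta'})a_{\alpha'}$), and \eqref{eq:txi2} says nothing about ${}^t\xi$ of the disordered product. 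To evaluate it you would first have to expand that product in the PBW basis of $U_\varepsilon(\Delta')$---but the assertion that this Levendorskii--Soibelman straightening produces no lower-order terms, only the scalar $\varepsilon_{\alpha'}^{((\alpha')^\vee,\beta')}$, is precisely the content of Proposition \ref{prop:rel-epsilon} (the case $\varepsilon=-1$, where $U_{-1}(\Delta')$ is noncommutative, is exactly the nontrivial one). The multiplicativity of ${}^t\xi$ that would legitimize the transport is the Hopf-homomorphism property which the paper deduces only \emph{after}, and \emph{from}, Propositions \ref{prop:rel-zeta} and \ref{prop:rel-epsilon}; and computing ${}^t\xi$ of the disordered product from the defining property \eqref{eq:txi1} would require a compatibility of $\sigma$ with products in the $U$-variable, which is not available here. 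Pushing in that direction essentially lands you on the paper's actual argument, which stays inside $U_\varepsilon(\Delta')$: it pairs $a_{\alpha'}a_{\beta'}$ and $a_{\beta'}a_{\alpha'}$ against $U^{L,-}_\varepsilon(\Delta')$ via $\tau^{\emptyset,L}_\varepsilon$, using $\tau(x_1x_2,y)=(\tau\otimes\tau)(x_2\otimes x_1,\Delta(y))$ together with the symmetry \eqref{eq:com} of the coproduct, and reduces the mixed $a$--$Sb$ relations to the rank-one case by braid-group operators.

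A smaller point: your bookkeeping identity is off. Since $\alpha'=r_\alpha\alpha$ gives $(\alpha')^\vee=\alpha^\vee/r_\alpha$, one has $((\alpha')^\vee,\beta')=(r_\beta/r_\alpha)(\alpha^\vee,\beta)$, not $r_\beta(\alpha^\vee,\beta)$. With the correct value the scalar match is immediate and needs no case-by-case parity analysis: iterating Proposition \ref{prop:rel-zeta} yields the factor $\eta_\alpha^{r_\beta(\alpha^\vee,\beta)}$, while $\varepsilon_{\alpha'}=\zeta_\alpha^{r_\alpha^2}=\eta_\alpha^{r_\alpha}$ gives $\varepsilon_{\alpha'}^{((\alpha')^\vee,\beta')}=\eta_\alpha^{r_\alpha\cdot(r_\beta/r_\alpha)(\alpha^\vee,\beta)}=\eta_\alpha^{r_\beta(\alpha^\vee,\beta)}$. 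So the part you expected to be fiddly is actually clean; but this does not repair the circularity described above.
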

\begin{proof}
Let
\[
(\,|\,)':\Gh_\BQ\times\Gh_\BQ\to\BQ
\]
be the $W$-invariant non-degenerate symmetric bilinear form such that $(\alpha'|\alpha')'=2$ for $\alpha'\in(\Delta')_{\sh}$.
Then we have $\varepsilon_{\alpha'}^{((\alpha')^\vee,\beta')}=\varepsilon^{(\alpha'|\beta')'}$ for $\alpha', \beta'\in(\Delta')^+$.

In order to show the first formula 
$a_{\alpha'}a_{\beta'}=
\varepsilon^{(\alpha'|\beta')'}
a_{\beta'}a_{\alpha'}$ for
$\alpha', \beta'\in(\Delta')^+$, 
it is sufficient to show
\[
\tau^{\emptyset,L}_\epsilon(a_{\alpha'}a_{\beta'},y)=
\varepsilon^{(\alpha'|\beta')'}
\tau^{\emptyset,L}_\epsilon(a_{\beta'}a_{\alpha'},y)
\]
for any $y\in U_\varepsilon^{L,-}=U_\varepsilon^{L,-}(\Delta')$,
where $\tau^{\emptyset,L}_\epsilon$ is defined for $\Delta'$.
Write
\[
\Delta(y)=\sum_{\gamma,\delta\in(Q')^+}u^y_{\gamma,\delta}(k_\delta^{-1}\otimes1)
\qquad(u^y_{\gamma,\delta}\in U^{L,-}_{\varepsilon,-\gamma}\otimes U^{L,-}_{\varepsilon,-\delta}),
\]
where for $\gamma=\sum_{i\in I}m_{i}\alpha_i'\in(Q')^+$
we set
\[
U^{L,-}_{\varepsilon,-\gamma}
=
\sum_{\sum_{k_j=i}n_{j}=m_i}\BC f_{k_1}^{(n_{1})}\cdots f_{k_s}^{(n_{s})}
\subset
U^{L,-}_{\varepsilon}.
\]
Then we have
\begin{align*}
&\tau^{\emptyset,L}_\epsilon(a_{\alpha'}a_{\beta'},y)
=
(\tau^{\emptyset,L}_\epsilon\otimes\tau^{\emptyset,L}_\epsilon)(a_{\beta'}\otimes a_{\alpha'},\Delta(y))\\
=&(\tau^{\emptyset,L}_\epsilon\otimes\tau^{\emptyset,L}_\epsilon)(a_{\alpha'}\otimes a_{\beta'},\CP(u_{\beta',\alpha'}^y)),
\end{align*}
where $\CP(y_1\otimes y_2)=y_2\otimes y_1$.
Similarly, we have
\[
\tau^{\emptyset,L}_\epsilon(a_{\beta'}a_{\alpha'},y)
=(\tau^{\emptyset,L}_\epsilon\otimes\tau^{\emptyset,L}_\epsilon)(a_{\alpha'}\otimes a_{\beta'},u_{\alpha',\beta'}^y).
\]
Hence it is sufficient to show
\begin{equation}
\label{eq:com}
\CP(u_{\gamma,\delta}^y)
=
\varepsilon^{(\gamma|\delta)'}u_{\delta,\gamma}^y
\qquad(y\in U^{L,-}_\varepsilon,\;\gamma, \delta\in (Q')^+).
\end{equation}
We can easily check that if \eqref{eq:com} holds for $y=y_1, y_2$, then it also holds for $y=y_1y_2$.
Hence the assertion follows from \eqref{eq:com} for $y=f_i^{(n)}$, which is easily checked.

The second formula is equivalent to 
$b_{\alpha'}b_{\beta'}=
\varepsilon^{(\alpha'|\beta')'}
b_{\beta'}b_{\alpha'}$
for $\alpha', \beta'\in(\Delta')^+$, and is proved similarly to the first formula.

Let us show the third and the fourth formula.
They are equivalent to
$a_{\alpha'}b_{\beta'}=
b_{\beta'}a_{\alpha'}$ for
$\alpha', \beta'\in(\Delta')^+$.
Take $1\leqq j,k\leqq N$ such that
$\alpha'=\beta_j', \beta'=\beta_k'$.
If $j=k$, then the assertion is a consequence of 
$a_{\alpha_i'}b_{\alpha_i'}=
b_{\alpha_i'}a_{\alpha_i'}$ in $U_\varepsilon(\Delta')$ for
$i\in I$.
Assume $j>k$.
Setting
\[
w=s_{i_1}\cdots s_{i_{k-1}},\qquad
y=s_{i_k}\cdots s_{i_{j-1}},\qquad
i_j=m,\qquad
i_k=n
\]
we have
\[
a_{\alpha'}=T_wT_y(a_{\alpha'_m}),\qquad
b_{\beta'}=T_w(b_{\alpha'_n}),
\]
and hence it is sufficient to show
\[
b_{\alpha'_n}T_y(a_{\alpha'_m})=T_y(a_{\alpha'_m})b_{\alpha'_n}.
\]
By $s_ny<y$ this is equivalent to
\[
T_n^{-1}(b_{\alpha'_n})T_{s_ny}(a_{\alpha'_m})=T_{s_ny}(a_{\alpha'_m})T_n^{-1}(b_{\alpha'_n}).
\]
By
$T_n^{-1}(b_{\alpha'_n})=-a_{\alpha'_n}k_n$ this is again equivalent to
\[
a_{\alpha'_n}T_{s_ny}(a_{\alpha'_m})=
\varepsilon^{(\alpha_n'|s_ny(\alpha'_m))'}
T_{s_ny}(a_{\alpha'_m})a_{\alpha'_n}.
\]
By $s_ny<s_nys_m$ we have $s_ny(\alpha'_m)\in(\Delta')^+$ and
$T_{s_ny}(a_{\alpha'_m})$ is a linear combination of the elements of the form
$
a_{\beta'_N}^{m_N}\cdots a_{\beta'_1}^{m_1}
$
with $\sum_jm_j\beta'_j=s_ny(\alpha'_m)$.
Hence the assertion follows from the first formula.
The case $j<k$ can be handled in a similar way.

The remaining formulas are obvious.
\end{proof}

We see easily from Proposition \ref{prop:rel-zeta}, Proposition \ref{prop:rel-epsilon} the following.
\begin{proposition}
${}^t\xi$ is a homomorphism of Hopf algebras.
\end{proposition}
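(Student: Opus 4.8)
We already know from the preceding proposition that ${}^t\xi\colon U_\varepsilon(\Delta')\to U_\zeta(\Delta)$ is a homomorphism of coalgebras, so what remains is to show that it is also an algebra homomorphism; compatibility with the unit, counit and antipode will then follow easily (the unit goes to the unit by inspection of \eqref{eq:txi2} with all $n_j=n'_j=0$ and $\mu=0$, and the antipode is determined by the bialgebra structure). The plan is therefore to verify that ${}^t\xi(xy)={}^t\xi(x){}^t\xi(y)$ for $x,y$ ranging over the PBW-type generators $a_{\beta'}$, $Sb_{\beta'}$, $k_\mu$ ($\mu\in P'$) of $U_\varepsilon(\Delta')$, using the explicit formula \eqref{eq:txi2} on the one side and the commutation relations of Proposition~\ref{prop:rel-zeta} and Proposition~\ref{prop:rel-epsilon} on the other.

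First I would reduce to a bookkeeping statement about straightening: an arbitrary element of $U_\varepsilon(\Delta')$ is a $\BC$-linear combination of the monomials $a_{\beta'_N}^{n_N}\cdots a_{\beta'_1}^{n_1}\,k_\mu\,S(b_{\beta'_N}^{n'_N}\cdots b_{\beta'_1}^{n'_1})$ appearing in \eqref{eq:txi2}, and ${}^t\xi$ sends such a monomial to the correspondingly ordered monomial in the $a_\beta^{r_\beta}$, $Sb_\beta^{r_\beta}$, $k_\mu$ scaled by the constant $\prod_j c_{\beta_j}^{n_j+n'_j}$. So it suffices to check that, whenever one rewrites a product of two such ordered $\Delta'$-monomials back into ordered form using Proposition~\ref{prop:rel-epsilon}, applying ${}^t\xi$ termwise to both sides gives an identity — equivalently, that the scalar $\varepsilon_{\alpha'}^{((\alpha')^\vee,\beta')}$ picked up when transposing two generators in $U_\varepsilon(\Delta')$ matches the scalar $\eta_\alpha^{(\alpha^\vee,\beta)}$ (resp.\ $\eta_\alpha^{(\lambda,\alpha^\vee)}$, resp.\ $\eta_\alpha^{(\mu,\alpha^\vee)/r_\alpha}$) picked up when transposing the images $a_\alpha^{r_\alpha}$, $Sb_\alpha^{r_\alpha}$, $k_\mu$ in $U_\zeta(\Delta)$. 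The constants $c_\beta$ play no role here since they are central scalars and simply multiply through on both sides.

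The heart of the matter is thus the scalar comparison
\[
\varepsilon_{\alpha'}^{((\alpha')^\vee,\beta')}
=
\eta_\alpha^{(\alpha^\vee,\beta)}
\qquad(\alpha,\beta\in\Delta^+),
\]
together with the analogous identities involving $k_\lambda$ and $k_\mu$. Recall $\eta_\alpha=\zeta_\alpha^{r_\alpha}$, so $\eta_\alpha^{(\alpha^\vee,\beta)}=\zeta_\alpha^{r_\alpha(\alpha^\vee,\beta)}$, while by the identity $\varepsilon_{\alpha'}=\zeta_\alpha^{r_\alpha^2}$ proved earlier we have $\varepsilon_{\alpha'}^{((\alpha')^\vee,\beta')}=\zeta_\alpha^{r_\alpha^2((\alpha')^\vee,\beta')}$; so the claim becomes $r_\alpha\,(\alpha^\vee,\beta)\equiv r_\alpha^2\,((\alpha')^\vee,\beta')\pmod{\ell_\alpha}$, which one checks using $\alpha'=r_\alpha\alpha$, $\beta'=r_\beta\beta$, $(\alpha')^\vee=r_\alpha^{-1}\alpha^\vee$ and the elementary fact that $r_\alpha\equiv r_\beta\pmod{\text{relevant modulus}}$ along root strings (the various $r_\gamma$ differ only by a factor $d$ and $r>d$ guarantees the needed divisibilities). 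For the mixed $k_\mu$ relations one uses $\mu\in P'$, i.e.\ $(\mu,\alpha^\vee)\in r_\alpha\BZ$, which is exactly what makes the exponent $(\mu,\alpha^\vee)/r_\alpha$ on the $U_\zeta$ side an integer and matches it with $(\mu,(\alpha')^\vee)$ on the $U_\varepsilon$ side. The main obstacle will be organizing this into a clean case-free argument rather than the root-by-root verification; I expect to handle it by working with the rescaled form $(\,|\,)'$ introduced in the proof of Proposition~\ref{prop:rel-epsilon}, for which $\varepsilon_{\alpha'}^{((\alpha')^\vee,\beta')}=\varepsilon^{(\alpha'|\beta')'}$ and the parallel statement $\eta_\alpha^{(\alpha^\vee,\beta)}=\varepsilon^{(\alpha'|\beta')'}$ holds for the original $\Delta$, reducing everything to a single identity of bilinear forms.
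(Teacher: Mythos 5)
Your overall plan---use the already established coalgebra property, check multiplicativity on PBW monomials by matching the straightening scalars of Proposition~\ref{prop:rel-epsilon} against those of Proposition~\ref{prop:rel-zeta}, and get the antipode for free---is exactly the argument the paper has in mind. But the identity you single out as ``the heart of the matter'' is misstated, and as displayed it is false. By \eqref{eq:txi2} the image of $a_{\beta'}$ is a scalar multiple of $a_\beta^{r_\beta}$, not of $a_\beta$; hence the scalar picked up in $U_\zeta(\Delta)$ when transposing the images of $a_{\alpha'}$ and $a_{\beta'}$ is obtained by iterating the relation of Proposition~\ref{prop:rel-zeta} $r_\beta$ times, i.e.\ it is $\eta_\alpha^{r_\beta(\alpha^\vee,\beta)}$, not $\eta_\alpha^{(\alpha^\vee,\beta)}$. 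Your displayed identity $\varepsilon_{\alpha'}^{((\alpha')^\vee,\beta')}=\eta_\alpha^{(\alpha^\vee,\beta)}$ fails, for instance, for $\Delta$ of type $A_2$ with $\ell=8$ and $\alpha,\beta$ adjacent simple roots: there $\varepsilon=\zeta_\alpha^{r_\alpha^2}=\zeta^{16}=1$, so the left-hand side is $1$, while $\eta_\alpha=\zeta^4=-1$ and $(\alpha^\vee,\beta)=-1$, so the right-hand side is $-1$. The same example refutes your closing claim that $\eta_\alpha^{(\alpha^\vee,\beta)}=\varepsilon^{(\alpha'|\beta')'}$.

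Once the exponent is corrected, no congruence modulo $\ell_\alpha$ and no assertion such as ``$r_\alpha\equiv r_\beta$ along root strings'' (which is not true in general: in type $B_2$ with $\ell=8$ one has $r_\alpha=4$ for $\alpha$ short and $r_\beta=2$ for $\beta$ long) is needed, because the identity is exact: from $\varepsilon_{\alpha'}=\zeta_\alpha^{r_\alpha^2}$ and $((\alpha')^\vee,\beta')=(r_\beta/r_\alpha)(\alpha^\vee,\beta)$ one gets $\varepsilon_{\alpha'}^{((\alpha')^\vee,\beta')}=\zeta_\alpha^{r_\alpha r_\beta(\alpha^\vee,\beta)}=\eta_\alpha^{r_\beta(\alpha^\vee,\beta)}$, and likewise $\varepsilon_{\alpha'}^{(\mu,(\alpha')^\vee)}=\eta_\alpha^{(\mu,\alpha^\vee)}$ for $\mu\in P'$, which is precisely the scalar for moving $k_\mu$ past $a_\alpha^{r_\alpha}$ in the third line of Proposition~\ref{prop:rel-zeta}; the relations involving $Sb$ are handled identically. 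With these corrected comparisons your reduction to straightening of ordered monomials does go through and recovers the paper's (essentially unwritten) argument that the proposition follows from Propositions~\ref{prop:rel-zeta} and~\ref{prop:rel-epsilon}.
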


\subsection{}
We define the Frobenius center  $Z_\Fr(U_\zeta)$ of  $U_\zeta$ by $Z_\Fr(U_\zeta)=\Image({}^t\xi)\cap Z(U_\zeta)$.
Note
\[
\Image({}^t\xi)\cong
\left(\bigotimes_{\alpha\in\Delta^+}\BC[a_\alpha^{r_\alpha},Sb_\alpha^{r_\alpha}]\right)
\otimes
\BC[P'].
\]
Namely, the image of ${}^t\xi$ consists of the linear combinations of the monomials of the form
\begin{equation}
\label{eq:monomial}
z=
a_{\beta_1}^{r_{\beta_1}m_{\beta_1}}\cdots
a_{\beta_N}^{r_{\beta_N}m_{\beta_N}}
k_{\mu}
(Sb_{\beta_1}^{r_{\beta_1}m'_{\beta_1}})\cdots
(Sb_{\beta_N}^{r_{\beta_1}m'_{\beta_N}})
\qquad(\mu\in P').
\end{equation}

If $\ell$ is odd, then we have $\eta_\alpha=1$ for any $\alpha\in\Delta^+$, and hence
$Z_\Fr(U_\zeta)=\Image({}^t\xi)$ by Proposition \ref{prop:rel-zeta}.

Assume $\ell$ is even.
By Proposition \ref{prop:rel-zeta} we see easily that 
$Z_\Fr(U_\zeta)$ consists of the linear combinations of the monomials of the form \eqref{eq:monomial} satisfying
\begin{align}
\label{eq:monomial1}
&
\sum_{\alpha\in\Delta^+_1}(m_\alpha+m_\alpha')\alpha^\vee\in 2Q^\vee,
\\
\label{eq:monomial2}
&
(\mu,\gamma^\vee)/r_\gamma
\in 2\BZ\qquad(\forall\gamma\in\Delta^+_1),
\end{align}
where
\[
\Delta^+_1=\{\alpha\in\Delta^+\mid
\eta_\alpha=-1\}
=
\begin{cases}
\Delta_\sh\cap\Delta^+\;&(r\not\in2\BZ, \ell=2r, d=2)\\
\Delta^+\;&(\text{otherwise}).
\end{cases}
\]
Note that \eqref{eq:monomial2} is equivalent to $\mu\in P''$, where
\begin{equation}
\label{eq:monomial2A}
P''=
\begin{cases}
2P'_0\;&(r\not\in2\BZ, \ell=2r, d=2)\\
2P'\;&(\text{otherwise}).
\end{cases}
\end{equation}
Here 
\[
P'_0=\{\lambda\in\Gh_\BQ^*\mid
d_\alpha(\lambda,\alpha^\vee)\in r\BZ\quad(\alpha\in\Delta)\}.
\]

Define subgroups  $\Gamma_1$ and $\Gamma_2$ of 
$\CG(\Delta')$ and $\CH(\Delta')$ respectively 
by
\begin{align*}
\Gamma_1&=
\begin{cases}
\{1\}&(\ell\not\in2\BZ)\\
\CG(\Delta')
\quad&(\ell\in2\BZ),
\end{cases}
\\
\Gamma_2&=
\begin{cases}
\{1\}&(\ell\not\in2\BZ)\\
(Q')^\vee_\sh/2(Q')^\vee_\sh
\quad&(\ell\in2\BZ, r\not\in2\BZ, d=2)\\
\CH(\Delta')
\qquad&(\text{otherwise}),
\end{cases}
\end{align*}
where
\[
(Q')^\vee_\sh
=
\sum_{\alpha'\in(\Delta')_\sh}\BZ(\alpha')^\vee.
\]
Set
\[
\Gamma=\Gamma_1\times \Gamma_2.
\]

By the above argument we have the following.
\begin{proposition}
\label{prop:Frob}
Under the identification $\Image({}^t\xi)\cong U_\varepsilon(\Delta')$ we have
\begin{align*}
Z_{\Fr}(U_\zeta(\Delta))\cong &
(U^+_\varepsilon(\Delta')\otimes SU^-_\varepsilon(\Delta'))^{\Gamma_1}
\otimes\BC[P'']
\\
=&
(U^+_\varepsilon(\Delta')\otimes SU^-_\varepsilon(\Delta'))^{\Gamma_1}
\otimes U^0_\varepsilon(\Delta')^{\Gamma_2}\\
=&
U_\varepsilon(\Delta')^{{\Gamma}}.
\end{align*}
\end{proposition}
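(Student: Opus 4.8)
The plan is to obtain the proposition from the monomial description of $Z_\Fr(U_\zeta(\Delta))$ already established, by transporting it through the explicit map ${}^t\xi$. By the discussion preceding the statement, $Z_\Fr(U_\zeta(\Delta))$ is the linear span of the monomials \eqref{eq:monomial} satisfying \eqref{eq:monomial1} and \eqref{eq:monomial2}; and by \eqref{eq:txi2} the identification $\Image({}^t\xi)\cong U_\varepsilon(\Delta')$ sends \eqref{eq:monomial} to a nonzero scalar multiple of $a_{\beta'_N}^{m_N}\cdots a_{\beta'_1}^{m_1}\,k_\mu\,S(b_{\beta'_N}^{m'_N}\cdots b_{\beta'_1}^{m'_1})$. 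Thus it carries the decomposition $\Image({}^t\xi)\cong\bigl(\bigotimes_{\alpha}\BC[a_\alpha^{r_\alpha},Sb_\alpha^{r_\alpha}]\bigr)\otimes\BC[P']$ onto the PBW decomposition $U^+_\varepsilon(\Delta')\otimes SU^-_\varepsilon(\Delta')\otimes U^0_\varepsilon(\Delta')$ of $U_\varepsilon(\Delta')$, the factor $\BC[P']$ matching $U^0_\varepsilon(\Delta')=\BC[P']$. Under this dictionary \eqref{eq:monomial1} is a condition on the exponents $m_j,m'_j$ alone, hence on the $U^+_\varepsilon(\Delta')\otimes SU^-_\varepsilon(\Delta')$ factor, while \eqref{eq:monomial2} is exactly the condition $\mu\in P''$ on the $\BC[P']$ factor; since these constrain disjoint blocks of PBW indices, the span of the monomials satisfying both is the tensor product of the span of the $a$--$Sb$ monomials satisfying \eqref{eq:monomial1} with $\BC[P'']$.

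So the whole statement reduces to two points: $(\star)$ the span of the $a$--$Sb$ monomials obeying \eqref{eq:monomial1} equals $(U^+_\varepsilon(\Delta')\otimes SU^-_\varepsilon(\Delta'))^{\Gamma_1}$, and $(\star\star)$ $\BC[P'']=U^0_\varepsilon(\Delta')^{\Gamma_2}$. Granting these, the second displayed line is obtained by substitution, and the third follows because $\Gamma_1$ fixes $U^0_\varepsilon(\Delta')$ pointwise and preserves $U^\pm_\varepsilon(\Delta')$ while $\Gamma_2$ fixes $U^+_\varepsilon(\Delta')SU^-_\varepsilon(\Delta')$ pointwise, so the two commuting actions---each diagonalized by the PBW basis---let $\Gamma$-invariants in $U_\varepsilon(\Delta')$ be computed factor by factor. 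For $(\star\star)$ I would unwind the $\CH(\Delta')$-action: $\Gamma_2$-invariance of $k_\mu$ in $\BC[P']$ says $(\nu,\mu)\in2\BZ$ for every $\nu\in(Q')^\vee$ representing a class of $\Gamma_2$; since $P'$ and $(Q')^\vee$ are mutually dual lattices this yields $2P'$ when $\Gamma_2=\CH(\Delta')$, and---using $(Q')^\vee_\sh=r^{-1}Q^\vee_\sh$---yields $2P'_0$ when $\Gamma_2=(Q')^\vee_\sh/2(Q')^\vee_\sh$, matching \eqref{eq:monomial2A} in every case (and for $\ell$ odd, $\Gamma_2=\{1\}$ and both sides are all of $\BC[P']$). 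For $(\star)$ I would compute the $\CG(\Delta')$-action on the PBW root vectors---using that $\delta_\lambda$ is an algebra automorphism intertwining the braid operators $T_i$ up to a weight-dependent sign, which gives $\delta_\lambda(a_{\beta'})=(-1)^{d_{\beta'}(\lambda,(\beta')^\vee)}a_{\beta'}$ with the same scalar on $Sb_{\beta'}$---and then, after substituting $\beta'=r_\beta\beta$, $(\beta')^\vee=r_\beta^{-1}\beta^\vee$, and the values $r_\beta=r$ (resp.\ $r/\gcd(r,d)$) for $\beta$ short (resp.\ long), check that the resulting $\Gamma_1$-invariance condition on the exponent tuples is exactly \eqref{eq:monomial1}.

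The step where I expect the real work to lie is $(\star)$. One has to translate the $\CG(\Delta')$-structure of $U_\varepsilon(\Delta')$---governed by the $\Delta'$-data $d_{\beta'}$, $(\beta')^\vee$ and the lattice $P'$---into the lattice $2Q^\vee$ of $\Delta$-coroots occurring in \eqref{eq:monomial1}, and these are tied together only through the rescaling $\alpha\mapsto r_\alpha\alpha$ and the interplay of the two values of $r_\beta$, the ratio $d$, and the fixed invariant form (so that, e.g., short roots of $\Delta$ become long roots of $\Delta'$, and $\Delta'\cong\Delta$ rather than $\Delta^\vee$, precisely in the exceptional case). The exceptional situation---$r$ odd, $\ell=2r$, $d=2$, where $\Delta^+_1$ consists only of the short positive roots, $P'=rP$, and $\Gamma_2$ is the smaller group $(Q')^\vee_\sh/2(Q')^\vee_\sh$---must be handled separately; when $\ell$ is odd all conditions are vacuous and there is nothing to prove, and the remaining even case parallels De Concini--Procesi's odd-order computation.
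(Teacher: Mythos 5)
Your overall route is the same as the paper's: the paper's ``proof'' of this proposition is nothing more than the translation, through ${}^t\xi$, of the monomial description of $Z_\Fr(U_\zeta(\Delta))$ given by \eqref{eq:monomial}, \eqref{eq:monomial1}, \eqref{eq:monomial2} into invariance conditions for $\Gamma_1\subset\CG(\Delta')$ and $\Gamma_2\subset\CH(\Delta')$, and your reduction to $(\star)$ and $(\star\star)$, your treatment of $(\star\star)$ by lattice duality, and the formal passage from the first displayed line to the third (both actions are diagonal on PBW monomials, each trivial on the other's tensor factor) all match what the paper leaves implicit.

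The problem is the one step you defer: the verification in $(\star)$ that $\Gamma_1$-invariance of an exponent tuple is exactly \eqref{eq:monomial1}. With the action you propose to use this does not come out as you expect when $d=2$ and $4\mid\ell$ (types $B$, $C$, $F_4$ with $r$ even). There a short root $\beta$ of $\Delta$ has $r_\beta=r$ and $\eta_\beta=-1$, so $\beta\in\Delta^+_1$ and \eqref{eq:monomial1} genuinely constrains the exponent of $a_\beta^{r_\beta}$; indeed $a_\beta^{r_\beta}$ is not central, since by Proposition \ref{prop:rel-zeta} it anticommutes with any $k_\lambda$ with $(\lambda,\beta^\vee)$ odd. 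On the other hand $\beta'=r\beta$ is a \emph{long} root of $\Delta'$, so the quantity you denote $d_{\beta'}$ (computed in $\Delta'$) equals $2$, and your formula $\delta_\lambda(a_{\beta'})=(-1)^{d_{\beta'}(\lambda,(\beta')^\vee)}a_{\beta'}$ gives $\delta_\lambda(a_{\beta'})=a_{\beta'}$ for every $\lambda\in P'$. Hence, with the intrinsic $\CG(\Delta')$-action, the monomial $a_{\beta'}$ lies in $(U^+_\varepsilon(\Delta')\otimes SU^-_\varepsilon(\Delta'))^{\Gamma_1}$ although its image $c_\beta\,a_\beta^{r}$ violates \eqref{eq:monomial1}; the two conditions do agree for $d=1$, for $d=3$, and in the exceptional case $d=2$, $r$ odd, but not here. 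So the check cannot be waved through as ``parallel to the odd-order computation'': you must either carry out this case and identify the $\Gamma_1$-action that actually cuts out $Z_\Fr$ --- the one obtained by transporting conjugation by the $k_\lambda$ $(\lambda\in P)$ through ${}^t\xi$, which multiplies $a_{\beta'}$ by $(-1)^{(\lambda,\beta^\vee)}$ precisely for $\beta\in\Delta^+_1$ --- and reconcile it with the statement, or explain why that transported action has the same invariants as the intrinsic one. As it stands this is a genuine gap at the crux of your argument, and it is exactly the point the paper compresses into ``by the above argument''.
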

\begin{proposition}
\label{prop:Frob2}
We have an isomorphism 
\begin{equation}
\label{eq:Z0}
Z_{\Fr}(U_\zeta(\Delta))\cong\BC[K(\Delta')]^{{\Gamma}}\;(=\BC[K(\Delta')/{\Gamma}])
\end{equation}
of algebras.
\end{proposition}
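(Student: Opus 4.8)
The plan is to combine Proposition \ref{prop:Frob} with the isomorphism \eqref{eq:U1K} and, in the case $\varepsilon=-1$, with the isomorphism $\Xi_1$ of Proposition \ref{prop:Xi}. By Proposition \ref{prop:Frob} we already have $Z_{\Fr}(U_\zeta(\Delta))\cong U_\varepsilon(\Delta')^{\Gamma}$, where $\Gamma=\Gamma_1\times\Gamma_2$ acts on $U_\varepsilon(\Delta')$ through the subgroups $\Gamma_1\subset\CG(\Delta')$ and $\Gamma_2\subset\CH(\Delta')$, so it remains to identify $U_\varepsilon(\Delta')^{\Gamma}$ with $\BC[K(\Delta')]^{\Gamma}$. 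First I would record that for every value of the parameter both $\CG(\Delta')$ and $\CH(\Delta')$ act on $U_\varepsilon(\Delta')$ by $\BC$-algebra automorphisms and that these two actions commute; this is immediate from the defining formulas, the only point being that on the torus part both actions merely rescale the $k_\lambda$ by roots of unity, so the commutation scalars between $U^0$ and $U^\pm$ are unchanged under such sign twists. Hence $\Gamma$ acts on $U_\varepsilon(\Delta')$ by algebra automorphisms.

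When $\varepsilon=-1$ I would pass from the non-commutative algebra $U_{-1}(\Delta')$ to $U_1(\Delta')$ as follows. Since $\varepsilon=-1$ forces $\ell$ to be even, we have $\Gamma_1=\CG(\Delta')$, and therefore $U_{-1}(\Delta')^{\Gamma}=\bigl(U_{-1}(\Delta')^{\CG(\Delta')}\bigr)^{\Gamma_2}$. Proposition \ref{prop:Xi}, applied to the root system $\Delta'$ with $z=1$, provides a $\BC$-algebra isomorphism $\Xi_1\colon U_{-1}(\Delta')^{\CG(\Delta')}\xrightarrow{\sim}U_{1}(\Delta')^{\CG(\Delta')}$ compatible with the $\CH(\Delta')$-actions; since $\Gamma_2\subset\CH(\Delta')$, restricting $\Xi_1$ to $\Gamma_2$-invariants gives $U_{-1}(\Delta')^{\Gamma}\cong U_{1}(\Delta')^{\Gamma}$. (If $\varepsilon=1$ this step is vacuous.) Thus in all cases $Z_{\Fr}(U_\zeta(\Delta))\cong U_1(\Delta')^{\Gamma}$. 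Finally, \eqref{eq:U1K} gives a $\BC$-algebra isomorphism $U_1(\Delta')\cong\BC[K(\Delta')]$; transporting the $\Gamma$-action along it makes the finite group $\Gamma$ act by algebra automorphisms on the reduced, finitely generated, commutative algebra $\BC[K(\Delta')]$, hence on the affine variety $K(\Delta')$, and taking invariants yields $U_1(\Delta')^{\Gamma}\cong\BC[K(\Delta')]^{\Gamma}=\BC[K(\Delta')/\Gamma]$, the quotient $K(\Delta')/\Gamma=\Spec\BC[K(\Delta')]^{\Gamma}$ existing as an affine variety because $\Gamma$ is finite. Chaining these isomorphisms establishes \eqref{eq:Z0}.

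The only genuinely substantive input beyond Proposition \ref{prop:Frob} is the identification $U_{-1}(\Delta')^{\CG}\cong U_{1}(\Delta')^{\CG}$ afforded by $\Xi_1$, which rests on Proposition \ref{prop:Xi} and ultimately on the result of \cite{KKO}; once that is available, every remaining step is formal. I do not expect a serious obstacle here, but I note that making the resulting $\Gamma$-action on $K(\Delta')$ explicit — which is what one would need for the fibre-product description of $Z(U_\zeta(\Delta))$ in the introduction — requires tracing the $\CG(\Delta')\times\CH(\Delta')$-action through the isomorphism \eqref{eq:U1K}, and that is not needed for the statement proved here.
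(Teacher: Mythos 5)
Your proposal is correct and follows essentially the same route as the paper: reduce via Proposition \ref{prop:Frob} to identifying $U_\varepsilon(\Delta')^{\Gamma}$, handle $\varepsilon=-1$ by the isomorphism $\Xi_1$ of Proposition \ref{prop:Xi} (the paper invokes it in exactly this way, and your observation that $\varepsilon=-1$ forces $\ell$ even, hence $\Gamma_1=\CG(\Delta')$, is the implicit justification), and then transport the $\Gamma$-action through \eqref{eq:U1K} to get $U_1(\Delta')^{\Gamma}\cong\BC[K(\Delta')]^{\Gamma}=\BC[K(\Delta')/\Gamma]$. Your write-up merely spells out the $\Gamma_2$-invariance step and the finiteness of $\Gamma$ more explicitly than the paper does.
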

\begin{proof}
Assume $\varepsilon=1$.
Then the action of the group ${\Gamma}$ on the algebra $U_1(\Delta')$ induces the action of ${\Gamma}$ on the algebraic variety $K(\Delta')$
via  the algebra isomorphism \eqref{eq:U1K} for $\Delta'$, and hence we have
$U_1(\Delta')^\Gamma\cong\BC[K(\Delta')]^\Gamma\cong\BC[K(\Delta')/\Gamma]$.
Assume $\varepsilon=-1$.
In this case we have
$U_{-1}(\Delta')^\Gamma\cong
U_1(\Delta')^\Gamma$ by Proposition \ref{prop:Xi}.
Hence we have also 
$U_{-1}(\Delta')^\Gamma\cong\BC[K(\Delta')]^\Gamma\cong\BC[K(\Delta')/\Gamma]$.
\end{proof}

By Proposition \ref{prop:Frob} and \cite{HE}
we obtain the following.
\begin{corollary}
\label{cor:CM}
$Z_\Fr(U_\zeta)$ is Cohen-Macaulay.
\end{corollary}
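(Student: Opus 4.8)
The plan is to reduce the statement, via Proposition~\ref{prop:Frob} and Proposition~\ref{prop:Frob2}, to an application of the Hochster--Eagon theorem \cite{HE} on invariants of finite groups. First I would invoke Proposition~\ref{prop:Frob2} to get the identification $Z_\Fr(U_\zeta(\Delta))\cong\BC[K(\Delta')]^{\Gamma}$; note that this identification is uniform in the sign $\varepsilon$, since the case $\varepsilon=-1$ is already absorbed into Proposition~\ref{prop:Frob2} through the algebra isomorphism $U_{-1}(\Delta')^\Gamma\cong U_1(\Delta')^\Gamma$ of Proposition~\ref{prop:Xi}. Thus it suffices to prove that the invariant ring of the finite group $\Gamma$ acting on the affine variety $K(\Delta')$ is Cohen--Macaulay.

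Next I would observe that $\BC[K(\Delta')]$ is itself regular, hence in particular Cohen--Macaulay. Indeed, $K(\Delta')$ is a smooth affine variety: the morphism $N^+\times H\times N^-\to K(\Delta')$ sending $(g,h,g')$ to $(gh,g'h^{-1})$ is an isomorphism of varieties, and $N^+\times H\times N^-$ is a product of affine spaces and a torus. Alternatively one can argue ring-theoretically: by the isomorphism \eqref{eq:U1K} for $\Delta'$ we have $\BC[K(\Delta')]\cong U_1(\Delta')$, and the triangular decomposition $U_1(\Delta')\cong U_1^-(\Delta')\otimes U_1^0(\Delta')\otimes U_1^+(\Delta')$ exhibits it as a tensor product of polynomial rings with a Laurent polynomial ring, which is regular.

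Finally, since $\Gamma$ is finite, $\BC[K(\Delta')]$ is a finitely generated module over its invariant subring, and $|\Gamma|$ is invertible in $\BC$; the Hochster--Eagon theorem \cite{HE} then yields that $\BC[K(\Delta')]^{\Gamma}$, and therefore $Z_\Fr(U_\zeta(\Delta))$, is Cohen--Macaulay. I do not expect a genuine obstacle in this argument; the only points that need a line of care are the smoothness of $K(\Delta')$ (so that the Cohen--Macaulay hypothesis of \cite{HE} is met) and the observation that Proposition~\ref{prop:Frob2} already provides the needed commutative model $\BC[K(\Delta')]^\Gamma$ in all cases, so that the non-commutativity of $U_{-1}(\Delta')$ when $\varepsilon=-1$ never enters.
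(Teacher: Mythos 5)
Your proposal is correct and follows essentially the same route as the paper: the paper's one-line proof likewise reduces, via Proposition \ref{prop:Frob} (with Proposition \ref{prop:Frob2} supplying the commutative model $\BC[K(\Delta')]^\Gamma$ in the case $\varepsilon=-1$), to the Hochster--Eagon theorem applied to the finite group $\Gamma$ acting on the Cohen--Macaulay ring $\BC[K(\Delta')]$. Your added verifications (smoothness of $K(\Delta')\cong N^+\times H\times N^-$, invertibility of $|\Gamma|$ in $\BC$) are exactly the details left implicit there.
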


\section{Main result}
Since the action $\circ_{\varepsilon}$ of $W$ on $\BC[2P']$ is the ordinary one, we have
\begin{equation}
\label{eq:Z1}
Z_\Har(U_\varepsilon(\Delta'))\cong
\BC[2P']^W\cong\BC[P']^W\cong
\BC[H(\Delta')/W],
\end{equation}
where the second isomorphism is induced by $\BC[P']\cong\BC[2P']\;(e(\lambda)\leftrightarrow e(2\lambda))$.
Similarly, we have
\begin{equation}
\label{eq:Z2}
Z_\Har(U_\zeta(\Delta))\cong\BC[H(\Delta)/W].
\end{equation}
Note that the action of $W$ on $H(\Delta')$ in \eqref{eq:Z1} is the ordinary one, while that on $H(\Delta)$ in \eqref{eq:Z2} is the twisted one given by 
\[
w:h\mapsto w(h_1h)h_1^{-1}
\qquad(w\in W, h\in H(\Delta)),
\]
where $h_1\in H(\Delta)$ is given by $\lambda(h_1)=\zeta^{2(\lambda,\tilde{\rho})}\;(\lambda\in P=\Hom(H(\Delta),\BC^\times))$.
\begin{proposition}
We have
\[
Z_\Fr(U_\zeta(\Delta))\cap Z_\Har(U_\zeta(\Delta))
={}^t\xi(Z_\Har(U_\varepsilon(\Delta'))),
\]
and hence 
\begin{equation}
\label{eq:Z3}
Z_\Fr(U_\zeta(\Delta))\cap Z_\Har(U_\zeta(\Delta))\cong\BC[H(\Delta')/W].
\end{equation}
\end{proposition}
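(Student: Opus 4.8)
The plan is to prove the displayed identity by a double inclusion; the isomorphism \eqref{eq:Z3} is then immediate from \eqref{eq:Z1}, since ${}^t\xi$ is an injective algebra homomorphism which, by the inclusion just proved, maps $Z_\Har(U_\varepsilon(\Delta'))$ isomorphically onto a subalgebra of $Z_\Har(U_\zeta(\Delta))$.

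For the inclusion ``$\supseteq$'' I would argue with the basis $\{t_{L_\varepsilon(\mu')}\mid\mu'\in(P')^+\}$ of $Z_\Har(U_\varepsilon(\Delta'))$. Pulling back $L_\varepsilon(\mu')$ along Lusztig's Frobenius $\Fr$ of \eqref{eq:Fr} yields $\Fr^*L_\varepsilon(\mu')\in\Mod(\dot U_\zeta(\Delta))$, all of whose weights lie in $P'$ and hence in a single coset of the root lattice $Q'$ of $\Delta'$; in particular $t_{\Fr^*L_\varepsilon(\mu')}\in Z_\Har(U_\zeta(\Delta))$. The key step is to identify ${}^t\xi(t_{L_\varepsilon(\mu')})$ with a nonzero scalar multiple of $t_{\Fr^*L_\varepsilon(\mu')}$. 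By Proposition \ref{prop:omagaA}, $\omega_\zeta({}^t\xi(t_{L_\varepsilon(\mu')}),x)=\omega_\varepsilon(t_{L_\varepsilon(\mu')},\Fr(x))=\trace(\Fr(x)k_{2\rho'},L_\varepsilon(\mu'))$ (with $\rho'$ the half-sum of positive roots of $\Delta'$), while $\omega_\zeta(t_{\Fr^*L_\varepsilon(\mu')},x)=\trace(xk_{2\rho},\Fr^*L_\varepsilon(\mu'))$ by the definition of $t_N$. One then checks that $k_{2\rho}$ acts on $\Fr^*L_\varepsilon(\mu')$, and $k_{2\rho'}$ on $L_\varepsilon(\mu')$, each by a single scalar — because $\zeta^{(2\rho,\alpha'_i)}=\zeta_i^{2r_i}=1$ and $\varepsilon^{(2\rho'|\alpha'_i)'}=\varepsilon_{\alpha'_i}^{2}=1$ for all $i\in I$, so both actions are constant along $Q'$-cosets. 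Hence the two trace functionals above are proportional, and since $\omega_\zeta$ is perfect we conclude ${}^t\xi(t_{L_\varepsilon(\mu')})=c_{\mu'}\,t_{\Fr^*L_\varepsilon(\mu')}$ with $c_{\mu'}\in\BC^\times$. Therefore ${}^t\xi(Z_\Har(U_\varepsilon(\Delta')))=\sum_{\mu'}\BC\,t_{\Fr^*L_\varepsilon(\mu')}\subseteq Z_\Har(U_\zeta(\Delta))$; as it is trivially contained in $\Image({}^t\xi)$, it lies in $Z_\Fr(U_\zeta(\Delta))$ as well.

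For ``$\subseteq$'', let $z\in Z_\Fr(U_\zeta(\Delta))\cap Z_\Har(U_\zeta(\Delta))$ and write $z=t_M$ for a virtual module $M$, so $\omega_\zeta(z,x)=\trace(xk_{2\rho},M)$. From $z\in\Image({}^t\xi)$ and Proposition \ref{prop:omagaA} one gets $\omega_\zeta(z,1_\lambda)=0$ for $\lambda\notin P'$ (as $\Fr(1_\lambda)=0$), hence $\dim 1_\lambda M=0$ for those $\lambda$; thus $\ch M\in\BZ[P]^W$ is supported on $P'$, so $\ch M\in\BZ[P']^W=\bigoplus_{\mu'\in(P')^+}\BZ\,\ch L_\varepsilon(\mu')$. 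Write $\ch M=\sum_{\mu'}n_{\mu'}\ch L_\varepsilon(\mu')$. Since $\ch\Fr^*L_\varepsilon(\mu')=\ch L_\varepsilon(\mu')$ and $\iota_\zeta(t_N)=\sum_\nu(\dim 1_\nu N)\zeta^{(\nu,2\tilde{\rho})}e(-2\nu)$, we obtain $\iota_\zeta(z)=\iota_\zeta\bigl(\sum_{\mu'}n_{\mu'}t_{\Fr^*L_\varepsilon(\mu')}\bigr)$; the injectivity of $\iota_\zeta$ (Proposition \ref{prop:HCzeta}) forces $z=\sum_{\mu'}n_{\mu'}t_{\Fr^*L_\varepsilon(\mu')}$, which by the first part lies in ${}^t\xi(Z_\Har(U_\varepsilon(\Delta')))$.

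I expect the main obstacle to be the first inclusion. Since ${}^t\xi$ is only a Hopf algebra homomorphism it need not carry the center of $U_\varepsilon(\Delta')$ into that of $U_\zeta(\Delta)$, and the real content is the proportionality ${}^t\xi(t_{L_\varepsilon(\mu')})\in\BC^\times\,t_{\Fr^*L_\varepsilon(\mu')}$: a priori the two relevant trace functionals differ by a weight-dependent twist, and it is precisely the computation $\zeta^{(2\rho,\alpha'_i)}=1=\varepsilon^{(2\rho'|\alpha'_i)'}$ — equivalently, that $k_{2\rho}$ and $k_{2\rho'}$ act by scalars on the modules in question — that collapses this twist to a constant.
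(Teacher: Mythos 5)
Your ``$\supseteq$'' argument and your final character/Harish-Chandra step are essentially the paper's own proof (the paper gets ${}^t\xi(t_{L_\varepsilon(\lambda)})=t_{\Fr^*L_\varepsilon(\lambda)}$ from Proposition \ref{prop:omagaA}; your more cautious version with a nonzero scalar $c_{\mu'}$ is fine and suffices). The gap is in ``$\subseteq$'': to write $\omega_\zeta(z,1_\lambda)=\omega_\varepsilon(u,\Fr(1_\lambda))$ with $u=({}^t\xi)^{-1}(z)$ you must first know $u\in{}^eU_\varepsilon(\Delta')$, since Proposition \ref{prop:omagaA} is stated only for such $u$ and $\omega_\varepsilon(u,\cdot)$ is not even defined otherwise. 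This is \emph{not} a consequence of $z\in\Image({}^t\xi)$, nor of $z\in\Image({}^t\xi)\cap{}^eU_\zeta$: it amounts to the Cartan monomials $k_\mu$ occurring in $z$ having $\mu\in 2P'$, whereas membership in $\Image({}^t\xi)$ only forces $\mu\in P'$ and membership in ${}^eU_\zeta$ only forces $\mu\in 2P$, and $P'\cap 2P\ne 2P'$ in general. Already for $A_1$ with $\ell=4$ one has $\alpha\in(P'\cap 2P)\setminus 2P'$, and $z=k_\alpha={}^t\xi(k_\alpha)$ lies in $\Image({}^t\xi)\cap{}^eU_\zeta$ with $({}^t\xi)^{-1}(z)\notin{}^eU_\varepsilon(\Delta')$ and $\omega_\zeta(z,1_{-\varpi})\ne 0$ although $-\varpi\notin P'$. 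So your key support claim ``$\dim 1_\lambda M=0$ for $\lambda\notin P'$'' genuinely uses the centrality of $z$, not just that it is in the image of ${}^t\xi$.

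The missing input is exactly Proposition \ref{prop:Frob}, i.e.\ conditions \eqref{eq:monomial1}--\eqref{eq:monomial2}: for $z\in Z_\Fr(U_\zeta(\Delta))$ the Cartan part lies in $\BC[P'']$, and combining $\mu\in P'\cap P''$ with $\mu\in 2P$ (which you do get from $z\in Z_\Har(U_\zeta(\Delta))\subset{}^eU_\zeta$) gives $\mu\in 2P'$ in every case; note that in the case $P''=2P'_0$ (that is $d=2$, $r$ odd, $\ell=2r$) both constraints are really needed, since $P'\cap P''\not\subset 2P'$ there. Once this is inserted your argument closes, and in fact at that point the trace computation can be bypassed entirely by arguing as the paper does: $\iota_\zeta(Z_\Fr\cap Z_\Har)\subset\BC[2P]^{W\circ_\zeta}\cap\BC[P'']=\BC[2P']^W=\iota_\zeta\bigl({}^t\xi(Z_\Har(U_\varepsilon(\Delta')))\bigr)$, and one concludes by the injectivity of $\iota_\zeta$ (Proposition \ref{prop:HCzeta}). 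A minor further point: since $Z_\Har(U_\zeta)$ is the $\BC$-span of the $t_{L_\zeta(\lambda)}$, your ``virtual module'' has complex rather than integral coefficients; this is harmless because the Weyl characters form a $\BC$-basis of $\BC[P']^W$.
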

\begin{proof}
Note that $Z_\Har(U_\varepsilon(\Delta'))$ is spanned by $\{t_{L_\varepsilon(\lambda)}\}_{\lambda\in (P')^+}$.
By Proposition \ref{prop:omagaA} we have ${}^t\xi(t_{L_\varepsilon(\lambda)})=t_{\Fr^*L_\varepsilon(\lambda)}$, where $\Fr^*L_\varepsilon(\lambda)$ is the $\dot{U}_\zeta(\Delta)$-module induced via $\Fr:\dot{U}_\zeta(\Delta)\to\dot{U}_\varepsilon(\Delta')$.
Hence we have 
\[
Z_\Fr(U_\zeta(\Delta))\cap Z_\Har(U_\zeta(\Delta))
\supset{}^t\xi(Z_\Har(U_\varepsilon(\Delta'))),
\]
and
$\iota_\zeta({}^t\xi(Z_\Har(U_\varepsilon(\Delta'))))=\BC[2P']^W$.
On the other hand by Proposition \ref{prop:Frob} we have 
$\iota_\zeta(Z_\Fr(U_\zeta(\Delta))\cap Z_\Har(U_\zeta(\Delta)))\subset
\BC[2P]^{W\circ_\zeta}\cap\BC[P'']=
\BC[2P']^W$.
\end{proof}
By the definition of the Harish-Chandra isomorphism we have the following.
\begin{proposition}
The morphism $H(\Delta)/W\to H(\Delta')/W$, which is associated to the inclusion
$Z_\Fr(U_\zeta(\Delta))\cap Z_\Har(U_\zeta(\Delta))\subset
Z_\Har(U_\zeta(\Delta))$ together with the isomorphisms \eqref{eq:Z2} and \eqref{eq:Z3}, 
is the natural one induced from the canonical morphism $H(\Delta)\to H(\Delta')$ associated to the embedding 
$P'\subset P$.
\end{proposition}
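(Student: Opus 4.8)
The plan is to unwind the two Harish--Chandra isomorphisms and to compute the effect of ${}^t\xi$ on the generators $t_{L_\varepsilon(\lambda)}$ of $Z_\Har(U_\varepsilon(\Delta'))$. Recall that \eqref{eq:Z2} and \eqref{eq:Z3} are built from the maps $\iota_\zeta$ and $\iota_\varepsilon$, followed by the identifications $e(2\nu)\leftrightarrow e(\nu)$ and the identification of $\BC[P]$ (resp.\ $\BC[P']$) with the coordinate ring of $H(\Delta)$ (resp.\ $H(\Delta')$). Under these identifications, and using the explicit formula $\iota_z(t_M)=\sum_{\lambda}(\dim 1_\lambda M)\,z^{(\lambda,2\tilde\rho)}e(-2\lambda)$, the element $t_M$ for $M\in\Mod(\dot U_z(\Delta))$ corresponds to the regular function on $H(\Delta)$ whose value at (the class of) $h$ is $\trace(h_1h^{-1},M)$, where $h_1\in H(\Delta)$ is determined by $\lambda(h_1)=z^{(\lambda,2\tilde\rho)}$ $(\lambda\in P)$ and $\trace$ denotes the trace of the torus action; the analogous statement holds over $H(\Delta')$ with an element $h_1'\in H(\Delta')$ built from $\varepsilon$ and the corresponding $\tilde\rho$. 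Since $\circ_\varepsilon$ is the ordinary $W$-action, $h_1'\in H(\Delta')^W$, and likewise $h_1$ conjugates the $\zeta$-twisted $W$-action on $H(\Delta)$ into the ordinary one; these are exactly the twists already incorporated into \eqref{eq:Z1} and \eqref{eq:Z2}.

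Next I would invoke the preceding proposition, which gives $Z_\Fr(U_\zeta(\Delta))\cap Z_\Har(U_\zeta(\Delta))={}^t\xi(Z_\Har(U_\varepsilon(\Delta')))$ together with ${}^t\xi(t_{L_\varepsilon(\lambda)})=t_{\Fr^*L_\varepsilon(\lambda)}$, where $\Fr^*L_\varepsilon(\lambda)$ is obtained by restriction along the Frobenius $\Fr\colon\dot U_\zeta(\Delta)\to\dot U_\varepsilon(\Delta')$. Because $\Fr(1_\mu)=1_\mu$ for $\mu\in P'$ and $\Fr(1_\mu)=0$ for $\mu\in P\setminus P'$ (this is the analogue for $\Fr$ of the property of $\xi$ recorded above, and follows from Lusztig's construction), the weight--multiplicity function of $\Fr^*L_\varepsilon(\lambda)$ as a $\dot U_\zeta(\Delta)$-module coincides with that of $L_\varepsilon(\lambda)$, now regarded as supported on $P'\subset P$. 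Hence $\trace(g,\Fr^*L_\varepsilon(\lambda))=\trace(p(g),L_\varepsilon(\lambda))$ for every $g\in H(\Delta)$, where $p\colon H(\Delta)\to H(\Delta')$ is the morphism of algebraic tori dual to the embedding $P'\hookrightarrow P$.

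Combining the two steps, the inclusion $Z_\Fr(U_\zeta(\Delta))\cap Z_\Har(U_\zeta(\Delta))\subset Z_\Har(U_\zeta(\Delta))$ translates, via \eqref{eq:Z2} and \eqref{eq:Z3}, into the algebra homomorphism $\BC[H(\Delta')/W]\to\BC[H(\Delta)/W]$ carrying the function $h'\mapsto\trace(h_1'(h')^{-1},L_\varepsilon(\lambda))$ to the function $h\mapsto\trace(p(h_1)p(h)^{-1},L_\varepsilon(\lambda))$, for all $\lambda\in(P')^+$. Since the functions $h'\mapsto\trace(h_1'(h')^{-1},L_\varepsilon(\lambda))$ span $Z_\Har(U_\varepsilon(\Delta'))$ and separate the points of $H(\Delta')/W$, this homomorphism is the comorphism of the morphism induced by $p$; matching the $W$-structures (ordinary on $H(\Delta')$, $\zeta$-twisted on $H(\Delta)$) and the compatibility of the $\tilde\rho$-twists noted in the first paragraph, one concludes that the resulting morphism $H(\Delta)/W\to H(\Delta')/W$ is exactly the natural one associated to $p$, i.e.\ to the embedding $P'\subset P$. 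The only delicate point is precisely this last bookkeeping of the twists $h_1$, $h_1'$ through the two Harish--Chandra isomorphisms, i.e.\ checking that the twist already present in \eqref{eq:Z2} is the one turning the bare torus map $p$ into the required equivariant morphism; everything else is formal once one observes that restriction along the Frobenius turns the character of $L_\varepsilon(\lambda)$ into its pullback along $p$.
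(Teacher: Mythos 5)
Your reduction is clean up to the final step, but the step you yourself call ``the only delicate point'' is exactly where the content of the proposition sits, and as you have set it up it cannot be closed. Your computation shows that, under \eqref{eq:Z2} and \eqref{eq:Z3}, the inclusion carries the function $h'\mapsto\trace\bigl(h_1'(h')^{-1},L_\varepsilon(\lambda)\bigr)$ to $h\mapsto\trace\bigl(p(h_1)p(h)^{-1},L_\varepsilon(\lambda)\bigr)$; to get the natural morphism you would need $\mu(p(h_1))=\mu(h_1')$ for all $\mu\in P'$, i.e.\ $\zeta^{(\mu,2\tilde\rho)}=\varepsilon^{(\mu,2\tilde\rho_{\Delta'})}$ on $P'$. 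This is false in general: take $\Delta=A_1$, $\ell=4$, $\zeta=i$, so $r_\alpha=2$, $\varepsilon=1$, $P'=2\BZ\varpi$, $h_1'=e$, while $(2\varpi)(p(h_1))=\varpi(h_1)^2=\zeta^2=-1$; translation by $p(h_1)(h_1')^{-1}$ does not act trivially on $H(\Delta')/W$, so your argument literally produces a different morphism (for $L=L_\varepsilon(2\varpi)$ it sends $u+u^{-1}$ to $-(t^2+t^{-2})$ instead of $t^2+t^{-2}$). The discrepancy originates in the identity ${}^t\xi(t_{L_\varepsilon(\lambda)})=t_{\Fr^*L_\varepsilon(\lambda)}$ that you import: pairing both sides with $1_\mu$ via $\omega_\zeta$ and Proposition \ref{prop:omagaA} gives $\dim(1_\mu L)\,\varepsilon^{(\mu,2\tilde\rho_{\Delta'})}$ on one side and $\dim(1_\mu L)\,\zeta^{(\mu,2\tilde\rho)}$ on the other (the $k_{2\rho}$ in the definition of $t_M$ is taken for $\Delta'$ and for $\Delta$ respectively), so that identity itself only holds up to the quadratic character $\mu\mapsto\zeta^{(\mu,2\tilde\rho)}\varepsilon^{-(\mu,2\tilde\rho_{\Delta'})}$ of $P'$ --- and that correction exactly cancels the mismatch $p(h_1)\neq h_1'$. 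Without locating this compensation, the ``bookkeeping of the twists'' you defer is not a formality; carried out naively it yields the wrong statement.

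The argument the proposition actually rests on is more elementary and bypasses characters and $\rho$-twists entirely. The Harish--Chandra maps $\iota_\varepsilon$ and $\iota_\zeta$ are extraction of the $U^0$-component in the triangular decomposition, and formula \eqref{eq:txi2} shows that ${}^t\xi$ sends $k_\mu\mapsto k_\mu$ for $\mu\in P'$ and sends every PBW monomial with a nontrivial $a$- or $Sb$-part to a monomial with a nontrivial $a$- or $Sb$-part. Hence $\iota_\zeta\circ{}^t\xi=\iota_\varepsilon$, with image in $\BC[2P']\subset\BC[2P]$; in other words, on Harish--Chandra images the inclusion $Z_\Fr(U_\zeta(\Delta))\cap Z_\Har(U_\zeta(\Delta))\subset Z_\Har(U_\zeta(\Delta))$ is just the inclusion $\BC[2P']^W\subset\BC[2P]^{W\circ_\zeta}$. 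After the identifications $e(2\nu)\leftrightarrow e(\nu)$ used in \eqref{eq:Z1}--\eqref{eq:Z3}, this is the restriction map $\BC[P']\hookrightarrow\BC[P]$ on invariants, i.e.\ the comorphism of the canonical torus morphism $H(\Delta)\to H(\Delta')$ dual to $P'\subset P$, which is the assertion; no $h_1$ or $h_1'$ ever enters. I recommend reworking your proof along these lines, or else explicitly proving the corrected form of ${}^t\xi(t_{L_\varepsilon(\lambda)})=t_{\Fr^*L_\varepsilon(\lambda)}$ and checking that the two twists cancel.
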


Note that we have the following commutative diagram
\[
\begin{CD}
Z_\Fr(U_\zeta(\Delta))\cap Z_\Har(U_\zeta(\Delta))
@>>>
Z_{\Fr}(U_\zeta(\Delta))
\\
@AAA@AAA
\\
Z_{\Har}(U_\varepsilon(\Delta'))
@>>>
U_\varepsilon(\Delta')^\Gamma
\\
@AAA@AAA
\\
Z_{\Har}(U_1(\Delta'))
@>>>
U_1(\Delta')^\Gamma
@>>>
U_1(\Delta')
\\
@AAA@AAA@AAA
\\
\BC[H(\Delta')/W]
@>>>
\BC[K(\Delta')/\Gamma]
@>>>
\BC[K(\Delta')]
\end{CD}
\]
where horizontal arrows are inclusions, and vertical arrows are isomorphisms.
Note also that the inclusion 
$\BC[H(\Delta')/W]
\to
\BC[K(\Delta')]$
is induced by $\upsilon\circ\kappa$, where 
$
\kappa:K(\Delta')\to G(\Delta')
$ and 
$\upsilon:G(\Delta')\to H(\Delta')/W$
are morphisms of algebraic varieties we have already defined.
Hence we have the following.
\begin{proposition}
The morphism $K(\Delta')/{\Gamma}\to H(\Delta')/W$, which is associated to the inclusion
$Z_\Fr(U_\zeta(\Delta))\cap Z_\Har(U_\zeta(\Delta))\subset
Z_\Fr(U_\zeta(\Delta))$ together with the isomorphisms \eqref{eq:Z0} and \eqref{eq:Z3}, 
is induced by 
$\upsilon\circ\kappa:K(\Delta')\to H(\Delta')/W$.
\end{proposition}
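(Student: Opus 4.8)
The plan is to read the statement off the commutative diagram displayed just above, via a short diagram chase and an appeal to Proposition \ref{prop:DP-Frob} for the root system $\Delta'$. Write $p\colon K(\Delta')\to K(\Delta')/\Gamma$ for the quotient morphism, and let $g\colon K(\Delta')/\Gamma\to H(\Delta')/W$ be the morphism dual to the algebra inclusion $\BC[H(\Delta')/W]\hookrightarrow\BC[K(\Delta')/\Gamma]$ obtained from $Z_\Fr(U_\zeta(\Delta))\cap Z_\Har(U_\zeta(\Delta))\subset Z_\Fr(U_\zeta(\Delta))$ together with the isomorphisms \eqref{eq:Z3} and \eqref{eq:Z0}; this $g$ exists as a morphism of affine varieties. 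Since $p^*\colon\BC[K(\Delta')/\Gamma]\hookrightarrow\BC[K(\Delta')]$ is injective, it suffices to show that the composite $\BC[H(\Delta')/W]\xrightarrow{g^*}\BC[K(\Delta')/\Gamma]\xrightarrow{p^*}\BC[K(\Delta')]$ equals $(\upsilon\circ\kappa)^*$, for then $g\circ p=\upsilon\circ\kappa$, which is precisely the assertion that $g$ is induced by $\upsilon\circ\kappa$.

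First I would check that all three squares of the diagram commute. The top square commutes because both vertical isomorphisms are restrictions of ${}^t\xi$ while the horizontal arrows are inclusions; here one uses $Z_\Har(U_\varepsilon(\Delta'))\subset U_\varepsilon(\Delta')^\Gamma$, which holds because the Harish-Chandra center lies in the $\CG(\Delta')\times\CH(\Delta')$-invariants (the action of $\CG$ is trivial on $Z$ by Proposition \ref{prop:ZU}(i) and the action of $\CH$ is trivial on $U^+\otimes SU^-$ and on the $\BC[2P']$-part of $U^0$) and because $\Gamma=\Gamma_1\times\Gamma_2$ with $\Gamma_1\subset\CG(\Delta')$, $\Gamma_2\subset\CH(\Delta')$. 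The middle square is the identity when $\varepsilon=1$; when $\varepsilon=-1$ (so $\ell$ is even and $\Gamma_1=\CG(\Delta')$) it is the restriction of the $\CH$-equivariant algebra isomorphism $\Xi_1\colon U_{-1}(\Delta')^{\CG}\to U_1(\Delta')^{\CG}$ of Proposition \ref{prop:Xi}, which carries $Z_\Har(U_{-1}(\Delta'))$ isomorphically onto $Z_\Har(U_1(\Delta'))$, as established at the end of Section \ref{sec:Har}. The bottom square commutes because the Hopf algebra isomorphism \eqref{eq:U1K} for $\Delta'$ intertwines the $\Gamma$-action on $U_1(\Delta')$ with the $\Gamma$-action on $\BC[K(\Delta')]$ used in Proposition \ref{prop:Frob2}, hence identifies $U_1(\Delta')^\Gamma$ with $\BC[K(\Delta')]^\Gamma=\BC[K(\Delta')/\Gamma]$ and $Z_\Har(U_1(\Delta'))$ with $\BC[H(\Delta')/W]$, compatibly with the inclusions into $U_1(\Delta')\cong\BC[K(\Delta')]$.

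Granting commutativity, the composite of the left column of vertical isomorphisms realizes \eqref{eq:Z3}, the composite of the middle column realizes \eqref{eq:Z0}, and so the composite inclusion $\BC[H(\Delta')/W]\hookrightarrow\BC[K(\Delta')]$ obtained by going down the left column and then across the bottom row equals $p^*\circ g^*$. On the other hand, by the bottom square this same composite equals the inclusion $Z_\Har(U_1(\Delta'))\subset U_1(\Delta')$ read through \eqref{eq:U1K}, which is $(\upsilon\circ\kappa)^*$ by Proposition \ref{prop:DP-Frob} applied to $\Delta'$. Hence $p^*\circ g^*=(\upsilon\circ\kappa)^*$, completing the proof.

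The step requiring the most care is the claim used implicitly above that the left and middle columns of the diagram really compose to the isomorphisms \eqref{eq:Z3} and \eqref{eq:Z0} as originally defined. This rests on the compatibility of ${}^t\xi$ with the Harish-Chandra construction, namely ${}^t\xi(t_{L_\varepsilon(\lambda)})=t_{\Fr^*L_\varepsilon(\lambda)}$ coming from Proposition \ref{prop:omagaA}, on the equality $\circ_\varepsilon=\circ_1$ of the two twisted $W$-actions on $\BC[2P']$, and on the compatibility of $\Xi$ with the Harish-Chandra isomorphisms recorded in Section \ref{sec:Har}. Once these identifications are pinned down, the remainder is a formal diagram chase.
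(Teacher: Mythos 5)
Your argument is correct and follows essentially the same route as the paper: the paper's proof is exactly the displayed commutative diagram together with the observation (Proposition \ref{prop:DP-Frob} applied to $\Delta'$) that $\BC[H(\Delta')/W]\hookrightarrow\BC[K(\Delta')]$ is induced by $\upsilon\circ\kappa$. You merely spell out the commutativity of the three squares and the compatibilities (${}^t\xi(t_{L_\varepsilon(\lambda)})=t_{\Fr^*L_\varepsilon(\lambda)}$, $\circ_\varepsilon=\circ_1$, $\Xi_1$) that the paper leaves implicit, which is a faithful elaboration rather than a different proof.
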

The main result of this paper is the following.
\begin{theorem}
\label{thm:main}
The natural homomorphism
\[
Z_{\Fr}(U_\zeta)\otimes_{Z_{\Fr}(U_\zeta)\cap Z_{\Har}(U_\zeta)}Z_{\Har}(U_\zeta)
\to
Z(U_\zeta)
\]
is an isomorphism.
In particular, we have
\[
Z(U_\zeta)
\cong
\BC[(K(\Delta')/{\Gamma})\times_{H(\Delta')/W}(H(\Delta)/W)].
\]
\end{theorem}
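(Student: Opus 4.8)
The plan is to follow the strategy of De Concini--Procesi \cite{DP}, but to replace their use of the quantum coadjoint action by computations with the bilinear forms $\sigma_\zeta$ and $\omega_\zeta$. Write $Z_0=Z_{\Fr}(U_\zeta)\cap Z_{\Har}(U_\zeta)$. Granting the two propositions preceding the theorem together with \eqref{eq:Z0}, \eqref{eq:Z2}, \eqref{eq:Z3} and Proposition \ref{prop:Frob2}, the second assertion of the theorem is a formal consequence of the first, so it suffices to prove that
\[
\phi\colon Z_{\Fr}(U_\zeta)\otimes_{Z_0}Z_{\Har}(U_\zeta)\longrightarrow Z(U_\zeta)
\]
is both injective and surjective.

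First I would record two structural facts. (a) $Z(U_\zeta)$ is a finite module over $Z_{\Fr}(U_\zeta)$: by Proposition \ref{prop:rel-zeta} the subalgebra of $U_\zeta$ generated by the $a_\alpha^{r_\alpha}$, $Sb_\alpha^{r_\alpha}\ (\alpha\in\Delta^+)$ and the $k_\mu\ (\mu\in P'')$ is central and $U_\zeta$ is a finite module over it; since moreover $\Image({}^t\xi)\cong U_\varepsilon(\Delta')$ is a finite module over $Z_{\Fr}(U_\zeta)\cong U_\varepsilon(\Delta')^{\Gamma}$ by Noether's finiteness theorem, $U_\zeta$, and hence $Z(U_\zeta)$, is finite over $Z_{\Fr}(U_\zeta)$. (b) $Z_{\Har}(U_\zeta)$ is a free $Z_0$-module: both $Z_{\Har}(U_\zeta)\cong\BC[H(\Delta)/W]$ and $Z_0\cong\BC[H(\Delta')/W]$ are polynomial rings, the first because the twisted $W$-action on $H(\Delta)$ is conjugate, via translation by $h_1$, to the ordinary one and $G(\Delta)$ is simply connected, the second for the analogous reason; so the finite module $Z_{\Har}(U_\zeta)$ over the regular ring $Z_0$, being Cohen--Macaulay, is free by the Auslander--Buchsbaum formula.

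Injectivity of $\phi$ then follows: by (b) the module $Z_{\Fr}(U_\zeta)\otimes_{Z_0}Z_{\Har}(U_\zeta)$ is $Z_{\Fr}(U_\zeta)$-free, hence embeds into $Z_{\Har}(U_\zeta)\otimes_{Z_0}\mathrm{Frac}(Z_{\Fr}(U_\zeta))$, and one checks, using the triangular decomposition $U_\zeta\cong U_\zeta^-\otimes U_\zeta^0\otimes U_\zeta^+$, the explicit monomials \eqref{eq:monomial} spanning $Z_{\Fr}(U_\zeta)$, and the Harish-Chandra images $\iota_\zeta(t_{L_\zeta(\lambda)})=\sum_\mu(\dim 1_\mu L_\zeta(\lambda))\zeta^{(\mu,2\tilde\rho)}e(-2\mu)$, that $Z_{\Fr}(U_\zeta)$ and $Z_{\Har}(U_\zeta)$ remain linearly disjoint over $Z_0$ inside $U_\zeta\otimes_{Z_{\Fr}(U_\zeta)}\mathrm{Frac}(Z_{\Fr}(U_\zeta))$. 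The heart of the matter is surjectivity, i.e.\ $Z(U_\zeta)=Z_{\Fr}(U_\zeta)\cdot Z_{\Har}(U_\zeta)$. I would first localize over $\Spec Z_{\Fr}(U_\zeta)$: using the perfect pairing $\sigma_\zeta$ with Proposition \ref{prop:sigma-inv}, and the pairing $\omega_\zeta$ with Proposition \ref{prop:omega-inv} for the trace functionals $t_M$, one makes the finite algebra $U_\zeta\otimes_{Z_{\Fr}(U_\zeta)}\mathrm{Frac}(Z_{\Fr}(U_\zeta))$ explicit over a generic central character and computes its centre to be exactly $\Image(\phi)\otimes_{Z_{\Fr}(U_\zeta)}\mathrm{Frac}(Z_{\Fr}(U_\zeta))$ --- concretely, over a dense open subset of $\Spec Z_{\Fr}(U_\zeta)$ the fibres of $U_\zeta$ are products of matrix algebras whose blocks are indexed by the fibre of $H(\Delta)/W\to H(\Delta')/W$, the central idempotents being built from the $t_M$. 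Thus $\phi$ is an isomorphism after inverting $Z_{\Fr}(U_\zeta)\setminus\{0\}$.

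By (a), $Z(U_\zeta)$ is then a domain integral over $\Image(\phi)$ with the same fraction field; and $\Image(\phi)\cong Z_{\Fr}(U_\zeta)\otimes_{Z_0}Z_{\Har}(U_\zeta)$ is the coordinate ring of $(K(\Delta')/\Gamma)\times_{H(\Delta')/W}(H(\Delta)/W)$, which is normal because $K(\Delta')/\Gamma\to H(\Delta')/W$ is flat with geometrically normal fibres (étale pullbacks of open subsets of Steinberg fibres, divided by $\Gamma$) while $H(\Delta)/W$ is regular; hence $Z(U_\zeta)=\Image(\phi)$, which completes the proof. I expect the genuine obstacle to be the generic computation of the centre of $U_\zeta$: pinning down the block structure over a generic central character via the De Concini--Kac relations of Proposition \ref{prop:rel-zeta} and the forms $\sigma_\zeta,\omega_\zeta$ is distinctly more delicate in the even case than in \cite{DP}, owing to the groups $\Gamma_1,\Gamma_2$ and to the discrepancy between $\Delta$ and $\Delta'$; and when $\varepsilon=-1$ one must in addition invoke the isomorphism $\Xi_z$ of Proposition \ref{prop:Xi} to replace the non-commutative algebra $U_{-1}(\Delta')$ by the commutative $U_{1}(\Delta')$.
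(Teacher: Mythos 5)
Your overall skeleton (injectivity plus surjectivity, then normality of $Z_{\Fr}\otimes_{Z_0}Z_{\Har}$ and finiteness to finish) matches the paper, but the step you yourself flag as "the genuine obstacle" is exactly the step you do not prove, and it is not the route the paper takes. You assert that over a dense open subset of $\Spec Z_{\Fr}$ the fibres of $U_\zeta$ are products of matrix algebras whose blocks are indexed by the fibres of $H(\Delta)/W\to H(\Delta')/W$, with central idempotents built from the $t_M$, and that this computes the generic centre. Nothing in Propositions \ref{prop:sigma-inv}, \ref{prop:omega-inv} or \ref{prop:rel-zeta} yields such a generic structure theorem; in the odd case De Concini--Procesi obtain statements of this kind through the quantum coadjoint action, which this paper deliberately avoids, so as written this is a missing idea rather than a deferred computation. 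The paper never determines the generic fibre of $U_\zeta$. Instead it argues numerically: $Z'=Z_{\Fr}\otimes_{Z_0}Z_{\Har}$ is a free $Z_{\Fr}$-module of rank $|P/P'|$ (Lemma \ref{lem:rank}, via Steinberg's theorem and the Quillen--Suslin theorem), $U_\zeta$ is finite over $Z_{\Fr}$ with $\dim_{Q(Z_{\Fr})}Q(Z_{\Fr})\otimes_{Z_{\Fr}}U_\zeta=(m\prod_{\alpha}r_\alpha)^2|P/P'|$ (Lemma \ref{lem:rank2}, proved by an explicit semigroup-algebra analysis of $\Image({}^t\xi)$ over $Z_{\Fr}$), and the degree of $U_\zeta$ is controlled by that of $\Gr U_\zeta$, whose elementary divisors were computed by De Concini--Procesi and by Beck (Proposition \ref{prop:rank3}); comparing these numbers forces $[Q(Z):Q(Z_{\Fr})]=|P/P'|=[Q(Z'):Q(Z_{\Fr})]$, hence $Q(Z)=Q(Z')$, and then normality of $Z'$ (Proposition \ref{prop:normal}) together with finiteness of $Z$ over $Z'$ gives $Z=Z'$. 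So the quantitative input replacing your unproved block-structure claim is the degree computation for $\Gr U_\zeta$, which your proposal nowhere supplies. Injectivity is also handled differently and more simply: $Z$ is a domain (De Concini--Kac), $\Spec Z\to\Spec Z_{\Fr}$ is dominant and factors through the finite morphism $\Spec Z'\to\Spec Z_{\Fr}$, so $\Spec Z\to\Spec Z'$ is dominant and $j$ is injective; your "linear disjointness checked on explicit monomials" is not an argument one can carry out directly, since the Harish-Chandra elements $t_M$ are not monomial.

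There are also several local inaccuracies. The subalgebra of $U_\zeta$ generated by the $a_\alpha^{r_\alpha}$, $Sb_\alpha^{r_\alpha}$ and $k_\mu\ (\mu\in P'')$ is \emph{not} central when $\ell$ is even: by Proposition \ref{prop:rel-zeta} these elements commute with $U_\zeta$ only up to the signs $\eta_\alpha^{(\alpha^\vee,\beta)}$ with $\eta_\alpha=-1$, which is precisely why one must pass to $\Image({}^t\xi)\cap Z(U_\zeta)$ and why the group $\Gamma$ appears. Noether's finiteness theorem does not apply verbatim to $U_\varepsilon(\Delta')$ when $\varepsilon=-1$, since that algebra is noncommutative; the paper's Lemma \ref{lem:rank2} proves finiteness of $\Image({}^t\xi)$ over $Z_{\Fr}$ (and the exact generic rank $m^2$, which your dimension count also requires) by hand. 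Your normality argument presupposes that $K(\Delta')/\Gamma\to H(\Delta')/W$ is flat with geometrically normal fibres; flatness is not preserved by passing to $\Gamma$-invariants without further argument, and the paper instead verifies Serre's criterion directly, using Hochster--Eagon for the Cohen--Macaulay property of $Z_{\Fr}\cong\BC[K(\Delta')]^\Gamma$ (Corollary \ref{cor:CM}) and an explicit codimension estimate for regularity in codimension one. Finally, Auslander--Buchsbaum gives only local freeness of $Z_{\Har}$ over $Z_0$; to get the free module of rank $|P/P'|$ used in the rank count one still needs Quillen--Suslin (or a graded argument), as in Lemma \ref{lem:rank}. The secondary points are repairable, but the missing degree input cannot be bypassed by the tools you cite.
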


The rest of the paper is devoted to the proof of Theorem \ref{thm:main}.
The arguments below mostly follow that in De Concini-Kac-Procesi \cite{DKP} (see also De Concini-Procesi \cite{DP}).
We set for simplicity
\begin{align*}
Z&=Z(U_\zeta),\\
Z_{\Fr}&=Z_{\Fr}(U_\zeta)\cong\BC[K(\Delta')/{\Gamma}],\\
Z_{\Har}&=Z_{\Har}(U_\zeta)
\cong\BC[H(\Delta)/W],
\end{align*}
so that
\[
Z_{\Fr}\cap Z_{\Har}\cong\BC[H(\Delta')/W].
\]
We are going to show that the canonical homomorphism
\[
j:Z_{\Fr}\otimes_{Z_{\Fr}\cap Z_{\Har}}Z_{\Har}
\to
Z
\]
is an isomorphism.
\begin{proposition}
\label{prop:normal}
$Z_{\Fr}\otimes_{Z_{\Fr}\cap Z_{\Har}}Z_{\Har}$ is a normal domain.
\end{proposition}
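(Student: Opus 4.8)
The plan is to show that $Z_{\Fr}\otimes_{Z_{\Fr}\cap Z_{\Har}}Z_{\Har}$ is both an integral domain and normal (integrally closed in its fraction field), working entirely on the geometric side via the identifications
\[
Z_{\Fr}\cong\BC[K(\Delta')/\Gamma],\qquad
Z_{\Har}\cong\BC[H(\Delta)/W],\qquad
Z_{\Fr}\cap Z_{\Har}\cong\BC[H(\Delta')/W].
\]
Under these identifications the tensor product is $\BC[(K(\Delta')/\Gamma)\times_{H(\Delta')/W}(H(\Delta)/W)]$, so it suffices to prove that this fiber product variety is irreducible and normal.

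First I would record that each of the three factors is a normal variety: $K(\Delta')$ is smooth (it is a product of affine spaces with a torus, being isomorphic as a variety to $N^+\times N^-\times H(\Delta')$), so $K(\Delta')/\Gamma$ is normal as the quotient of a normal variety by a finite group; and $H(\Delta)/W$, $H(\Delta')/W$ are normal because $H/W$ is always normal (again a finite quotient of a smooth variety, in fact isomorphic to an affine space by Chevalley). Next I would analyze the two morphisms to $H(\Delta')/W$. The morphism $H(\Delta)/W\to H(\Delta')/W$ is induced by the isogeny $H(\Delta)\to H(\Delta')$ dual to $P'\subset P$, which is a finite surjective morphism, hence so is the induced map on $W$-quotients; in particular it is flat (finite morphisms between regular varieties of the same dimension are flat). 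The morphism $K(\Delta')/\Gamma\to H(\Delta')/W$ factors through $\upsilon\circ\kappa:K(\Delta')\to H(\Delta')/W$; here $\kappa(g_1,g_2)=g_1g_2^{-1}$ and $\upsilon$ sends $g$ to the $W$-orbit of the semisimple part of $g$, and one checks this composite is a faithfully flat (indeed smooth, away from issues that do not affect flatness) surjection of relative dimension $\dim N^+ + \dim N^-$, since the generic fiber is a union of translates of $N^+\times N^-$ sitting inside $K$.

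With flatness of one of the two maps in hand, the fiber product inherits good properties: $\BC[(K(\Delta')/\Gamma)\times_{H(\Delta')/W}(H(\Delta)/W)]$ is flat over $\BC[K(\Delta')/\Gamma]$ with fibers obtained by base change from the finite flat map $H(\Delta)/W\to H(\Delta')/W$, and one can run Serre's criterion: the total space is $R_1$ because the base $K(\Delta')/\Gamma$ is regular in codimension $1$ and the fibers are (being finite over a point, hence $0$-dimensional, the $R_1$ condition on fibers is vacuous in the relevant sense — more carefully, one uses that a flat morphism with normal base and geometrically normal — here geometrically reduced and finite — fibers has normal total space, which is the standard descent statement, e.g. EGA IV 6.5.4). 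Irreducibility of the total space I would deduce from the fact that $K(\Delta')/\Gamma$ is irreducible and the generic fiber of the base-changed finite map is irreducible — this last point is the place where the argument is most delicate and is the main obstacle: one must check that over the generic point of $H(\Delta')/W$ the scheme $H(\Delta)/W\times_{H(\Delta')/W}\operatorname{Spec}\BC(H(\Delta')/W)$ stays irreducible, equivalently that the function field extension $\BC(H(\Delta)/W)/\BC(H(\Delta')/W)$ does not split off extra components when pulled back — which I would verify by noting that $H(\Delta)\to H(\Delta')$ has connected (torus) fibers and that the $W$-action is compatible on both sides, so the field extension is a subextension of the Galois extension $\BC(H(\Delta))/\BC(H(\Delta))^W$ and the fiber product with $\BC(K(\Delta')/\Gamma)$ over $\BC(H(\Delta')/W)$ remains a field because $\BC(K(\Delta')/\Gamma)$ contains $\BC(H(\Delta')/W)$ algebraically closed in it (as $K(\Delta')\to H(\Delta')/W$ has rational generic fiber). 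Once irreducibility and normality (via $R_1+S_2$, with $S_2$ coming from flatness over a Cohen-Macaulay — indeed regular — base with Cohen-Macaulay fibers, cf.\ Corollary \ref{cor:CM}) are both established, the proposition follows.
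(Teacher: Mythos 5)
The $S_2$ half of your plan is fine and close to the paper's (flatness of $H(\Delta)/W\to H(\Delta')/W$ by miracle flatness, Cohen--Macaulayness of $Z_{\Fr}$ from Corollary \ref{cor:CM}; note only that $K(\Delta')/\Gamma$ is Cohen--Macaulay, not regular). The genuine gap is in your $R_1$ step. You base-change the finite flat map $H(\Delta)/W\to H(\Delta')/W$ to $K(\Delta')/\Gamma$ and invoke the descent statement ``flat with normal base and geometrically normal fibers implies normal total space,'' claiming the fibers are geometrically reduced and that ``the $R_1$ condition on fibers is vacuous.'' This is false: a zero-dimensional fiber is normal (equivalently $R_1$, equivalently $R_0$) precisely when it is reduced, and the fibers of $H(\Delta)/W\to H(\Delta')/W$ are non-reduced along the branch divisor. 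Already for $A_1$ the map is $u\mapsto u^2-2$ on $\BA^1$, with fiber $\BC[u]/(u^2)$ over the branch point. The branch locus is a divisor, its preimage in $K(\Delta')/\Gamma$ has codimension one, so the hypothesis of EGA IV 6.5.4 fails exactly at codimension-one points and nothing about $R_1$ follows from flatness of the $H$-side map alone. The failure is not cosmetic: the fiber product of a ramified double cover with a map that is not smooth along the branch locus can be non-normal (e.g.\ $\{x^2=y^2\}$), so some smoothness of the \emph{other} projection in codimension one is indispensable.

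That is precisely what the paper proves and what is missing from your proposal: $K(\Delta')/\Gamma\to H(\Delta')/W$ is smooth off a closed subset of codimension at least two, after which the base change along the finite map from the smooth variety $H(\Delta)/W$ is smooth over $H(\Delta)/W$ outside a codimension-$\geqq2$ set, giving $R_1$. This needs two inputs you do not supply: the known codimension estimate for the non-smooth locus of the Steinberg map $G(\Delta')\to H(\Delta')/W$ (combined with smoothness of $K(\Delta')\to G(\Delta')$), and, crucially, an analysis of the finite quotient $K(\Delta')\to K(\Delta')/\Gamma$ showing it is \'etale outside codimension two; the paper does this by writing $K(\Delta')\cong\bigl(\prod_{\alpha\in\Delta^+}\BC^2\bigr)\times\Spec\,\BC[P']$ with $\Gamma$ acting by signs on the $\BC^2$-factors and checking the non-free locus has codimension $\geqq2$. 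Your proposal never examines where $\Gamma$ acts freely, only that the quotient is normal, which is not enough. Finally, your irreducibility step rests on the unproved assertion that $\BC(H(\Delta')/W)$ is algebraically closed in $\BC(K(\Delta')/\Gamma)$ (``rational generic fiber''); the generic fiber of $K(\Delta')\to H(\Delta')/W$ is a nontrivial finite cover of an open piece of a regular class, and its irreducibility, before and after dividing by $\Gamma$, needs an actual argument rather than a parenthesis (the paper sidesteps this by deducing the domain property differently).
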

\begin{proof}
By Serre's criterion it is sufficient to show that the scheme
$(K(\Delta')/{\Gamma})\times_{H(\Delta')/W}(H(\Delta)/W)$ is smooth in codimension one and Cohen-Macaulay.

We first show that $(K(\Delta')/{\Gamma})\times_{H(\Delta')/W}(H(\Delta)/W)$ is smooth in codimension one.
Since $H(\Delta)/W$ is smooth and $H(\Delta)/W\to H(\Delta')/W$ is a finite morphism, it is sufficient to show that there exists a subvariety $X$ of $K(\Delta')/{\Gamma}$ with codimension greater than one such that 
$(K(\Delta')/{\Gamma})\setminus X\to H(\Delta')/W$ is smooth.
Consider first $K(\Delta')\to H(\Delta')/W$.
Then there exists a subvariety $X_1$ of $K(\Delta')$ with codimension greater than one such that 
$K(\Delta')\setminus X_1\to H(\Delta')/W$ is smooth since a similar result is known to hold for $G(\Delta')\to H(\Delta')/W$ and $K(\Delta')\to G(\Delta')$ is smooth.
Hence it is sufficient to show that there exists a subvariety $X_2$ of $K(\Delta')$ with codimension greater than one such that $K(\Delta')\setminus X_2\to (K(\Delta')/{\Gamma}$ is smooth since $K(\Delta')\to (K(\Delta')/{\Gamma}$ is a finite morphism.
We may assume ${\Gamma}\ne\{1\}$.
In this case we have
\[
K(\Delta')=Y\times \Spec\;\BC[P'],\quad
K(\Delta')/{\Gamma}
=Y/{P'}\times \Spec\;\BC[P''],
\]
where
$
Y=\prod_{\alpha\in\Delta^+}\BC^2
$
and the action of $P'$ on $Y$ is given by
\[
\lambda:(x_\alpha)_{\alpha\in\Delta^+}\mapsto
((-1)^{d_{\alpha'}(\lambda,(\alpha')^\vee)}x_\alpha)_{\alpha\in\Delta^+}
\qquad
(\lambda\in P',\;x_\alpha\in\BC^2).
\]
Since $\Spec\;\BC[P']\to\Spec\;\BC[P'']$ is smooth, it is sufficient to show that there exists a subvariety $Z$ of $Y$ with codimension greater than one such that $Y\setminus Z\to Y/{P'}$ is smooth.
Note that the obvious action of $\prod_{\alpha\in\Delta^+}GL_2(\BC)$ on $Y$ commutes with the action of $P'$.
Hence $Y\to Y/{P'}$ is smooth on the open orbit
$
Y_0=\prod_{\alpha\in\Delta^+}(\BC^2\setminus\{0\})
$.
Our claim is a consequence of 
$\dim(Y\setminus Y_0)\leqq \dim Y-2$.

Let us show that $Z_{\Fr}(U_\zeta)\otimes_{\BC[2P']^W}{\BC[2P]^{W\circ_\zeta}}$ is Cohen-Macaulay.
By \cite{St} $\BC[2P']^W$ and $\BC[2P]^{W\circ_\zeta}$ are both isomorphic to the polynomial ring in $|I|$-variables.
Hence we have 
\[
Z_{\Fr}(U_\zeta)\otimes_{\BC[2P']^W}{\BC[2P]^{W\circ_\zeta}}
\cong
Z_{\Fr}(U_\zeta)[X_1,\dots, X_{|I|}]/(f_1,\dots, f_{|I|})
\]
for some $f_1,\dots, f_{|I|}\in Z_{\Fr}(U_\zeta)[X_1,\dots, X_{|I|}]$.
Moreover, we have obviously $\dim Z_{\Fr}(U_\zeta)\otimes_{\BC[2P']^W}{\BC[2P]^{W\circ_\zeta}}=\dim Z_{\Fr}(U_\zeta)$.
Hence our claim is a consequence of Corollary \ref{cor:CM} and well-known results on Cohen-Macaulay rings.
\end{proof}
\begin{lemma}
\label{lem:rank}
$Z_{\Fr}\otimes_{Z_{\Fr}\cap Z_{\Har}}Z_{\Har}$ is a free $Z_{\Fr}$-module of rank $P/P'$.
\end{lemma}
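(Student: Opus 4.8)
The plan is to reduce the statement to a purely commutative‑algebra fact about the Harish–Chandra subalgebras and then pass to $Z_{\Fr}$ by base change. Write $A=Z_{\Fr}\cap Z_{\Har}$. Since $Z_{\Fr}\otimes_{A}Z_{\Har}$ is obtained from the $A$‑module $Z_{\Har}$ by extension of scalars along the ring map $A\to Z_{\Fr}$, it suffices to prove that $Z_{\Har}$ is a free $A$‑module of rank $|P/P'|$; freeness and rank are then inherited by $Z_{\Fr}\otimes_{A}Z_{\Har}$ as a $Z_{\Fr}$‑module. So the whole content is the assertion that $Z_{\Har}$ is free of rank $|P/P'|$ over $Z_{\Fr}\cap Z_{\Har}$.

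To see this I would use the Harish–Chandra isomorphisms. By Proposition \ref{prop:HCzeta} and the Proposition identifying $Z_{\Fr}\cap Z_{\Har}$ with ${}^t\xi\bigl(Z_{\Har}(U_{\varepsilon}(\Delta'))\bigr)$, the inclusion $A\subset Z_{\Har}$ is carried by $\iota_\zeta$ to the inclusion $\BC[2P']^{W}\subset\BC[2P]^{W\circ_\zeta}$, where $\circ_\zeta$ restricts on $\BC[2P']$ to the ordinary action of $W$ (because $w\mu-\mu$ lies in the root lattice of $\Delta'$ for $\mu\in P'$ and $\zeta_i^{2r_i}=1$). By \cite{St} both $\BC[2P']^{W}$ and $\BC[2P]^{W\circ_\zeta}$ are polynomial rings in $|I|$ variables, hence regular domains of Krull dimension $|I|$. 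Moreover this inclusion is a finite ring extension: $\BC[2P]$ is module‑finite over $\BC[2P']$, a free basis being a set of coset representatives of $2P/2P'$ (of cardinality $|P/P'|$), and $\BC[2P']$ is module‑finite over $\BC[2P']^{W}$, so the $\BC[2P']^{W}$‑submodule $\BC[2P]^{W\circ_\zeta}$ of $\BC[2P]$ is module‑finite over $\BC[2P']^{W}$. Now $Z_{\Har}$ is Cohen–Macaulay (it is regular), is module‑finite over the regular ring $A$, and has the same dimension $|I|$ as $A$; by miracle flatness $Z_{\Har}$ is a flat, hence finitely generated projective, $A$‑module, and since $A$ is a polynomial ring over $\BC$ the Quillen–Suslin theorem makes it free. (Equivalently one may argue locally by Auslander–Buchsbaum and then invoke Quillen–Suslin for global freeness.)

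It then remains to compute the rank, which equals the generic rank $[\operatorname{Frac}(Z_{\Har}):\operatorname{Frac}(A)]$. Put $E=\operatorname{Frac}(\BC[2P])$ and $F=\operatorname{Frac}(\BC[2P'])$, so that $[E:F]=[2P:2P']=|P/P'|$. The group $W$ acts faithfully on $E$ (via $\circ_\zeta$, whose scalars are nonzero and whose underlying permutation of weights is faithful) and on $F$ (via the ordinary action), compatibly with $F\subset E$; hence $E/E^{W}$ and $F/F^{W}$ are Galois of degree $|W|$, while $\operatorname{Frac}(A)=\operatorname{Frac}(\BC[2P']^{W})=F^{W}$ and $\operatorname{Frac}(Z_{\Har})=\operatorname{Frac}(\BC[2P]^{W\circ_\zeta})=E^{W}$. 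Comparing the two factorizations of $[E:F^{W}]$, one through $F$ and one through $E^{W}$, gives $|W|\cdot[E^{W}:F^{W}]=[E:F]\cdot|W|$, whence $[E^{W}:F^{W}]=|P/P'|$. Base change along $A\to Z_{\Fr}$ finishes the proof. The one point that needs a little care — and the closest thing to an obstacle — is the module‑finiteness of $Z_{\Har}$ over $Z_{\Fr}\cap Z_{\Har}$ together with the bookkeeping of the twisted $W$‑action through the Harish–Chandra identifications; everything else is standard commutative algebra once the two algebras are recognized as polynomial rings via \cite{St}.
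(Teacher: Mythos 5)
Your proposal is correct, and it follows the paper's overall reduction — base change along $Z_{\Fr}\cap Z_{\Har}\to Z_{\Fr}$ to reduce everything to showing that $\BC[2P]^{W\circ_\zeta}$ is free of rank $|P/P'|$ over $\BC[2P']^{W}$, with Steinberg's theorem and Quillen--Suslin at the two ends — but the two key middle steps are done differently. The paper first untwists $\circ_\zeta$ by an explicit automorphism of $\BC[P]$ and then obtains projectivity purely from Steinberg's freeness statements: $\BC[2P]$ is free over $\BC[2P]^{W}$ with $\BC[2P]^{W}$ a direct summand, and free over $\BC[2P']^{W}$ of rank $|W|\cdot|P/P'|$, so $\BC[2P]^{W}$ is a direct summand of a free $\BC[2P']^{W}$-module, hence projective, the rank being read off by comparing ranks of free modules. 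You instead keep the twisted action (checking, correctly, that $\circ_\zeta$ restricts to the ordinary $W$-action on $\BC[2P']$, which is exactly what makes $\iota_\zeta(Z_{\Fr}\cap Z_{\Har})=\BC[2P']^{W}$ in the paper), obtain projectivity by miracle flatness/Auslander--Buchsbaum for a module-finite CM extension of a regular ring of equal dimension, and compute the rank by a Galois-theoretic degree count $[E^{W}:F^{W}]=|P/P'|$ on fraction fields. Both routes are sound; yours trades the paper's elementary direct-summand bookkeeping for slightly heavier but standard commutative algebra, and the field-degree computation of the rank is arguably cleaner than rank counting through the chain of free modules. Two small points you should make explicit in a write-up: the identification $Q(R^{G})=Q(R)^{G}$ (norm/averaging trick) used when passing to fraction fields, and the fact that citing \cite{St} for $\BC[2P]^{W\circ_\zeta}$ being a polynomial ring tacitly uses the conjugating automorphism $e(2\lambda)\mapsto\zeta^{(\lambda,2\tilde{\rho})}e(2\lambda)$ — i.e.\ the same untwisting the paper performs explicitly at the start of its proof.
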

\begin{proof}
It is sufficient to show that 
$Z_{\Har}$ 
is a free $Z_{\Fr}\cap Z_{\Har}$-module of rank $P/P'$.
Namely, we have only to show that 
$\BC[2P]^{W\circ_\zeta}$ 
is a free $\BC[2P']^{W}$-module of rank $P/P'$.
We may replace $\BC[2P]^{W\circ_\zeta}$ with $\BC[2P]^{W}$ by applying an automorphism of $\BC[P]$ which sends $\BC[2P]^{W\circ_\zeta}$ and $\BC[2P']^{W}$ to
$\BC[2P]^{W}$ and $\BC[2P']^{W}$  respectively.
By Steinberg \cite{St} 
$\BC[2P]$ (resp.\ $\BC[2P']$) is a free $\BC[2P]^{W}$-module
(resp.\ $\BC[2P']^{W}$-module) of rank $|W|$.
Since $\BC[2P]$ is a free $\BC[2P']$-module of rank $|P/P'|$, 
$\BC[2P]$ is a free $\BC[2P']^W$-module of rank $|W|\times|P/P'|$.
Note that 
$\BC[2P]^W$ is a direct summand of the free $\BC[2P]^W$-module $\BC[2P]$ by \cite{St}.
Hence $\BC[2P]^W$ is also a direct summand of the free $\BC[2P']^W$-module $\BC[2P]$.
It follows that $\BC[2P]^W$ is a projective $\BC[2P']^W$-module of rank $|P/P'|$.
Since $\BC[2P']^W$ is isomorphic to a polynomial ring by \cite{St}, we conclude 
from the Serre conjecture that 
$\BC[2P]^{W}$ 
is a free $\BC[2P']^{W}$-module of rank $P/P'$.
\end{proof}
Set
\begin{equation}
\label{eq:m}
m=
\begin{cases}
1\qquad&(\ell\not\in 2\BZ)\\
2^{|\Delta_{\sh}\cap \Pi|}
&(\ell\in2\BZ, r\not\in 2\BZ, d=2)\\
2^{|\Pi|}
&(\text{otherwise}).
\end{cases}
\end{equation}
For a commutative domain $S$ we denote by $Q(S)$ the quotient field.
\begin{lemma}
\label{lem:rank2}
$U_\zeta$ is a finitely generated $Z_\Fr$-module, and we have
\[
\dim_{Q(Z_{\Fr})}Q(Z_{\Fr})\otimes_{Z_\Fr}U_\zeta=\left(m\prod_{\alpha\in\Delta^+}r_\alpha\right)^2\times |P/P'|.
\]
\end{lemma}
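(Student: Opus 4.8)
The plan is to compute the generic rank in two stages along the central chain $Z_\Fr\subseteq\Image({}^t\xi)\subseteq U_\zeta$; recall that $Z_\Fr\subseteq Z(U_\zeta)$ is central and, being $\BC[K(\Delta')/\Gamma]$, is a domain. \emph{Stage 1: $U_\zeta$ over $\Image({}^t\xi)$.} Transporting the triangular decomposition of $U_\varepsilon(\Delta')$ through ${}^t\xi$ gives $\Image({}^t\xi)=A^+\cdot\BC[P']\cdot A^-$, with $A^+$ (resp.\ $A^-$) the subalgebra generated by the $a_\alpha^{r_\alpha}$ (resp.\ $Sb_\alpha^{r_\alpha}$), $\alpha\in\Delta^+$. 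By Proposition \ref{prop:rel-zeta} the relations moving an $a_\alpha^{r_\alpha}$ or $Sb_\alpha^{r_\alpha}$ past a generator are exact — they produce only the scalars $\eta_\alpha^{(\cdot,\cdot)}$ — so every PBW monomial $a_{\beta_N}^{m_N}\cdots a_{\beta_1}^{m_1}$ rewrites, up to a nonzero scalar, as an element of $A^+$ times a PBW monomial with $0\le m_j<r_{\beta_j}$, symmetrically on the $Sb$ side, while $k_\lambda$ factors as $k_{\nu_0}k_{\nu'}$ with $\nu'\in P'$ and $\nu_0$ running through a transversal of $P'$ in $P$. Combined with the vector-space decomposition $U_\zeta\cong U_\zeta^+\otimes U_\zeta^0\otimes SU_\zeta^-$, this exhibits the reduced monomials $a^{\mathbf s}k_{\nu_0}(Sb)^{\mathbf s'}$ as a free left $\Image({}^t\xi)$-basis of $U_\zeta$; hence $U_\zeta$ is a free left $\Image({}^t\xi)$-module of rank $\bigl(\prod_{\alpha\in\Delta^+}r_\alpha\bigr)^2|P/P'|$.

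\emph{Stage 2: $\Image({}^t\xi)$ over $Z_\Fr$.} If $\ell$ is odd then $Z_\Fr=\Image({}^t\xi)$ and nothing is needed. Assume $\ell$ even, and localize at $S=Z_\Fr\setminus\{0\}$ (which is a central multiplicative set). Each generator $a_\alpha^{r_\alpha},Sb_\alpha^{r_\alpha},k_\mu$ ($\alpha\in\Delta^+,\ \mu\in P'$) of $\Image({}^t\xi)$ becomes a unit in $S^{-1}\Image({}^t\xi)$ — for instance $(a_\alpha^{r_\alpha})^2$ is a scalar times $a_\alpha^{2r_\alpha}$, which satisfies \eqref{eq:monomial1} and \eqref{eq:monomial2} and so lies in $Z_\Fr$ — so $S^{-1}\Image({}^t\xi)$ is spanned over $Q(Z_\Fr)$ by the images of the monomials \eqref{eq:monomial}, two of which differ by a factor in $Q(Z_\Fr)$ exactly when their exponent data differ by a triple obeying \eqref{eq:monomial1} and \eqref{eq:monomial2}. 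Using the description of $Z_\Fr$ inside $\Image({}^t\xi)$ from Proposition \ref{prop:Frob}, the number of resulting classes is the product of the number of cosets of \eqref{eq:monomial1} in $\BZ^{\Delta^+}\times\BZ^{\Delta^+}$ and $|P'/P''|$; a direct lattice computation from the definitions of $\Delta^+_1$, of $Q^\vee$, of $P''$ in \eqref{eq:monomial2A}, and of $m$ in \eqref{eq:m} shows
\[
\dim_{Q(Z_\Fr)}S^{-1}\Image({}^t\xi)=m^2 .
\]
In particular $\Image({}^t\xi)$, and hence $U_\zeta$, is a finitely generated $Z_\Fr$-module.

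\emph{Conclusion.} Freeness in Stage 1 is preserved by localization, so
\[
\dim_{Q(Z_\Fr)}Q(Z_\Fr)\otimes_{Z_\Fr}U_\zeta=\Bigl(\prod_{\alpha\in\Delta^+}r_\alpha\Bigr)^2|P/P'|\cdot\dim_{Q(Z_\Fr)}S^{-1}\Image({}^t\xi)=\Bigl(m\prod_{\alpha\in\Delta^+}r_\alpha\Bigr)^2|P/P'| .
\]
The hard part will be Stage 2: verifying that the number of cosets of \eqref{eq:monomial1} times $|P'/P''|$ is exactly $m^2$. This is elementary but type-sensitive, and it is precisely where the two shapes of $m$ in \eqref{eq:m} — with $|\Delta_\sh\cap\Pi|$ rather than $|\Pi|$ in the case $r$ odd, $d=2$ — must be matched against the coroot lattice $Q^\vee$ and against $\Delta^+_1$ (when $\eta_\alpha=-1$ fails to hold for all positive roots). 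A subsidiary point requiring care is the claim that every monomial of $\Image({}^t\xi)$ becomes invertible after inverting $Z_\Fr\setminus\{0\}$, which turns the dimension computation into a coset count.
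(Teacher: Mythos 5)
Your route is the same as the paper's: factor through $C=\Image({}^t\xi)$, use freeness of $U_\zeta$ over $C$ of rank $\bigl(\prod_{\alpha\in\Delta^+}r_\alpha\bigr)^2|P/P'|$, and then reduce the remaining factor to a count of monomial classes after inverting central elements. The problem is that the decisive step is not carried out. You assert that ``a direct lattice computation'' gives $\dim_{Q(Z_\Fr)}S^{-1}C=m^2$, and then concede in your closing paragraph that verifying exactly this — that the index of the subgroup $\tilde L\subset(\BZ^2)^{\Delta^+}$ cut out by \eqref{eq:monomial1}, times $|P'/P''|$, equals $m^2$ — is ``the hard part.'' That computation is the actual content of the lemma, and it is genuinely nontrivial in one case: one has $(\BZ^2)^{\Delta^+}/\tilde L\cong Q_1^\vee/(Q_1^\vee\cap2Q^\vee)$, and when $\Delta^+_1=\Delta^+$ the count $m^2=2^{2|\Pi|}$ is immediate since $P''=2P'$ and $Q_1^\vee=Q^\vee$; but when $d=2$, $r$ odd, $\ell=2r$ (so $\Delta^+_1=\Delta^+\cap\Delta_\sh$ and $m=2^{|\Delta_\sh\cap\Pi|}$) one needs (i) $P'=rP$, $P''=r(P\cap2P_1)$ with $P_1=\{\mu\mid(\mu,\alpha^\vee)\in\BZ\ (\alpha\in\Delta_\sh)\}$, together with a lattice-duality argument ($P$ and $P\cap2P_1$ are dual to $Q^\vee$ and $\tfrac12Q_1^\vee+Q^\vee$) to get $|P'/P''|=|Q_1^\vee/(Q_1^\vee\cap2Q^\vee)|$, and (ii) the identity $Q_1^\vee+2Q^\vee=\sum_{\alpha\in\Delta_\sh\cap\Pi}\BZ\alpha^\vee+2Q^\vee$, proved by checking that the right-hand side is $W$-stable, to identify the common index with $m$. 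Without this, your argument determines the rank only up to an unevaluated lattice index and does not establish the stated formula.

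A second, smaller gap: the inference ``In particular $\Image({}^t\xi)$, and hence $U_\zeta$, is a finitely generated $Z_\Fr$-module'' does not follow from finiteness of the generic rank (e.g.\ $\BC[x]\subset\BC[x,(x-1)^{-1}]$ has generic rank $1$ but is not a finite module). Finite generation needs its own (easy) argument, essentially the observation you already make: since the square of each monomial generator lies in $Z_\Fr$, every monomial of $C$ is a $Z_\Fr$-multiple of one of the finitely many monomials with $m_\beta,m'_\beta\in\{0,1\}$ and $\mu$ in a transversal of $P''$ in $P'$; this is how the paper argues, via the semigroup-algebra description $C\cong\BC[P']\otimes\BC[(\BZ_{\geqq0}^2)^{\Delta^+}]$, $Z_\Fr\cong\BC[P'']\otimes\BC[L]$. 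Relatedly, in the coset count you should justify that monomials in distinct classes stay linearly independent over $Q(Z_\Fr)$ (use the grading of $C$ by $(\BZ^2)^{\Delta^+}\times P'$ and the fact that $Z_\Fr$ is spanned by monomials with exponents in $L\times P''$); with that, localizing at the central monomials as the paper does turns the dimension into the index $[(\BZ^2)^{\Delta^+}\times P':\tilde L\times P'']$, and the proof is complete once the index computation above is supplied.
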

\begin{proof}
Denote by $C$ the image of ${}^t\xi:U_\varepsilon(\Delta')\to U_\zeta(\Delta)$.
Then we have
\[
Z_{\Fr}\subset C\subset U_\zeta.
\]
Since $U_\zeta$ is a free $C$-module of rank $\left(\prod_{\alpha\in\Delta^+}r_\alpha\right)^2\times |P/P'|$, it is sufficient to show that $C$ is a finitely generated $Z_\Fr$-module and
\[
\dim_{Q(Z_{\Fr})}Q(Z_{\Fr})\otimes_{Z_\Fr}C=m^2.
\]
If $\ell$ is odd, we have $C=Z_\Fr$, and hence we may assume that $\ell$ is even.
By the explicit description of $Z_\Fr$ given by \eqref{eq:monomial1}, \eqref{eq:monomial2}
we have
\[
C\cong
\BC[P']\otimes\BC[(\BZ_{\geqq0}^2)^{\Delta^+}],\qquad
Z_\Fr\cong
\BC[P'']\otimes\BC[L],
\]
where
\[
L=\{(m_\beta,m'_\beta)_{\beta\in\Delta^+}
\in(\BZ_{\geqq0}^2)^{\Delta^+}
\mid
\sum_{\beta\in\Delta_1^+}(m_\beta+m'_\beta)\beta^\vee\in 2Q^\vee\},
\]
and $\BC[(\BZ_{\geqq0}^2)^{\Delta^+}]$ and $\BC[L]$ are the semigroup algebras of the semigroups $(\BZ_{\geqq0}^2)^{\Delta^+}$ and $L$ respectively.
Note that $\BC[P']$ is a free $\BC[P'']$-module of rank $|P'/P''|$.
Since $\BC[(\BZ_{\geqq0}^2)^{\Delta^+}]$ is a finitely generated $\BC[2(\BZ_{\geqq0}^2)^{\Delta^+}]$-module, it is also a finitely generated $\BC[L]$-module by $2(\BZ_{\geqq0}^2)^{\Delta^+}\subset L$.
Hence $C$ is a finitely generated $Z_\Fr$-module.

Set 
\[
\tilde{L}=\{(m_\beta,m'_\beta)_{\beta\in\Delta^+}
\in(\BZ^2)^{\Delta^+}
\mid
\sum_{\beta\in\Delta_1^+}(m_\beta+m'_\beta)\beta^\vee\in 2Q^\vee\}.
\]
Then we have $(\BZ^2)^{\Delta^+}/\tilde{L}\cong Q_1^\vee/(Q_1^\vee\cap 2Q^\vee)$, where $Q_1^\vee=\sum_{\beta\in\Delta_1^+}\BZ\beta^\vee$.
Hence $\BC[(\BZ^2)^{\Delta^+}]$ is a free $\BC[\tilde{L}]$-module of rank $|Q_1^\vee/(Q_1^\vee\cap 2Q^\vee)|$.
Since $\BC[(\BZ^2)^{\Delta^+}]$ and $\BC[\tilde{L}]$ are localizations of $\BC[(\BZ_{\geqq0}^2)^{\Delta^+}]$ and $\BC[L]$ respectively with respect to the multiplicative set $S=2(\BZ_{\geqq0}^2)^{\Delta^+}$ of $\BC[L]$, we obtain that $S^{-1}C$ is a free $S^{-1}Z_\Fr$-module of rank $|P'/P''|\times |Q_1^\vee/(Q_1^\vee\cap 2Q^\vee)|$.
Therefore, $Q(Z_\Fr)\otimes_{Z_\Fr}C$ is a free $Q(Z_\Fr)$-module of rank $|P'/P''|\times |Q_1^\vee/(Q_1^\vee\cap 2Q^\vee)|$.
It remains to show $m^2=|P'/P''|\times |Q_1^\vee/(Q_1^\vee\cap 2Q^\vee)|$.
In the case $\Delta^+_1=\Delta^+$ we have $P''=2P'$, $Q_1^\vee=Q^\vee$, and hence the assertion is obvious.
In the case $\Delta^+_1=\Delta^+\cap\Delta_\sh$ we have $P'=rP$, $P''=r(P\cap 2P_1)$, where
\[
P_1=\{\mu\in \Gh_\BQ^*\mid(\mu,\alpha^\vee)\in\BZ\;(\alpha\in\Delta_\sh)\},
\]
and hence $P'/P''\cong P/(P\cap 2P_1)$.
On the other hand we have
\[
Q_1^\vee/(Q_1^\vee\cap 2Q^\vee)
\cong
(Q_1^\vee+2Q^\vee)/2Q^\vee
\cong
(\frac12Q_1^\vee+Q^\vee)/Q^\vee.
\]
Since $P$ and $P\cap2P_1$ are lattices in $\Gh_\BQ^*$ dual to $Q^\vee$ and $\frac12Q_1^\vee+Q$ respectively, 
we obtain
$|P'/P''|=|Q_1^\vee/(Q_1^\vee\cap 2Q^\vee)|$.
It remains to check $m=|(Q_1^\vee+2Q^\vee)/2Q^\vee|$.
For that it is sufficient to show 
\[
Q_1^\vee+2Q^\vee
=\sum_{\alpha\in\Delta_\sh\cap\Pi}\BZ\alpha^\vee+2Q^\vee.
\]
In order to prove this we have only to show that the right-hand side is stable under the action of the Weyl group.
Hence it is sufficient to show $s_j(\alpha^\vee_i)\in\sum_{\alpha\in\Delta_\sh\cap\Pi}\BZ\alpha^\vee+2Q^\vee$ for any $i, j\in I$ satisfying $\alpha_i\in\Delta_\sh$.
This is obvious if $\alpha_j\in\Delta_\sh$.
In the case where $\alpha_j\in\Delta_\lo$ we have $s_j(\alpha_i^\vee)=
\alpha_i^\vee-(\alpha_i^\vee,\alpha_j)\alpha_j^\vee$ with $(\alpha_i^\vee,\alpha_j)\in\{0,-2\}$.
We are done.
\end{proof}

In general let $R$ be a $\BC$-algebra.
Assume that $R$ is prime (i.e. $x, y\in R,\;xRy=\{0\}$ implies $x=0$ or $y=0$), and is finitely generated as a 
$Z(R)$-module.
Then
$Q(Z(R))\otimes _{Z(R)}R$ is a finite-dimensional central simple algebra over the field $Q(Z(R))$.
Hence $\overline{Q(Z(R))}\otimes _{Z(R)}R$ is isomorphic to the matrix algebra $M_n(\overline{Q(Z(R))})$ for some $n$, where $\overline{Q(Z(R))}$ denotes the algebraic closure of $Q(Z(R))$.
Then this $n$ is called the degree of $R$.
Namely, the degree $n$ of $R$ is given by 
\[
\dim_{Q(Z(R))}Q(Z(R))\otimes _{Z(R)}R=n^2.
\]

Note that $U_\zeta$ is a finitely generated $Z(U_\zeta)$-module by Lemma \ref{lem:rank2}.
In \cite{DK} De Concini-Kac have shown that $U_\zeta$ has no zero divisors using a certain degeneration $\Gr\, U_\zeta$ of $U_\zeta$.
In particular, it is a prime algebra.
Hence we have the notion of the degree of $U_\zeta$.
In \cite{DKP2} De Concini-Kac-Procesi proved that the degree of $U_\zeta$ is less than or equal to that of $\Gr\, U_\zeta$.
They have also shown that the degree of $\Gr\, U_\zeta$ can be computed from the elementary divisors of a certain matrix with integral coefficients.
The actual computation of the elementary divisors was done in \cite{DKP2} when $\ell$ is odd, and in Beck \cite{Beck} in the remaining cases.
From these results we have the following.
\begin{proposition}
\label{prop:rank3}
We have
\[
\dim_{Q(Z)}
Q(Z)\otimes _{Z}U_\zeta\leqq
\left(m\prod_{\alpha\in\Delta^+}r_\alpha\right)^2.
\]
\end{proposition}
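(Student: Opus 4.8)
This proposition assembles results of De Concini--Kac \cite{DK}, De Concini--Procesi \cite{DP} and Beck \cite{Beck} together with Lemma \ref{lem:rank2}. First, both sides make sense: by Lemma \ref{lem:rank2} the algebra $U_\zeta$ is module-finite over $Z_{\Fr}\subset Z$, hence over $Z$, and by \cite{DK} $U_\zeta$ is a domain, in particular prime; so $Q(Z)\otimes_Z U_\zeta$ is a central simple $Q(Z)$-algebra and $\dim_{Q(Z)}Q(Z)\otimes_Z U_\zeta=(\deg U_\zeta)^2$. The asserted inequality is therefore equivalent to
\[
\deg U_\zeta\leqq m\prod_{\alpha\in\Delta^+}r_\alpha .
\]

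The plan is to bound $\deg U_\zeta$ via a degeneration, following \cite{DP}. Ordering the PBW monomials $a_{\beta_N}^{m_N}\cdots a_{\beta_1}^{m_1}k_\lambda S(b_{\beta_N}^{m'_N}\cdots b_{\beta_1}^{m'_1})$ by a suitable total order on their exponent data gives a filtration of $U_\zeta$ whose associated graded ring $\Gr\, U_\zeta$ is a ``quantum space'': it is generated by $a_\beta, b_\beta\ (\beta\in\Delta^+)$ and $k_\lambda\ (\lambda\in P)$ subject only to the monomial $\zeta$-commutation relations obtained by discarding the lower-order terms from the defining relations of $U_\zeta$, the surviving commutation exponents being precisely those recorded in Proposition \ref{prop:rel-zeta} together with the elementary torus exponents from $k_\lambda e_ik_\lambda^{-1}=q_i^{(\lambda,\alpha_i^\vee)}e_i$. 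Since $\Gr\, U_\zeta$ is a domain (whence so is $U_\zeta$, which is how primeness is obtained in \cite{DK}) and, by Lemma \ref{lem:rank2}, is module-finite over its center, the degree $\deg\Gr\, U_\zeta$ is defined; and the upper semicontinuity of the degree under filtration, established in \cite{DP}, gives $\deg U_\zeta\leqq\deg\Gr\, U_\zeta$.

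It then remains to compute $\deg\Gr\, U_\zeta$. Inverting the normal elements $a_\beta$ and $b_\beta$ (which generate an Ore set, so the degree is unchanged) turns $\Gr\, U_\zeta$ into a quantum torus $T^\lambda T^\mu=\zeta^{B(\lambda,\mu)}T^\mu T^\lambda$ attached to the lattice $\Lambda=\BZ^{\Delta^+}\oplus\BZ^{\Delta^+}\oplus P$ recording the exponents of the $a_\beta$, of the $b_\beta$, and of the $k_\lambda$, with a skew-symmetric integral form $B$; its center is spanned by those $T^\lambda$ with $B(\lambda,\Lambda)\subset\ell\BZ$, and $\deg\Gr\, U_\zeta=\bigl(\prod_k \ell/\gcd(\ell,e_k)\bigr)^{1/2}$, where $e_1\mid e_2\mid\cdots$ are the elementary divisors of $B$. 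This is exactly the invariant evaluated by De Concini--Procesi for $\ell$ odd \cite{DP}, where it equals $\prod_{\alpha\in\Delta^+}\ell_\alpha=\prod_{\alpha\in\Delta^+}r_\alpha=m\prod_{\alpha\in\Delta^+}r_\alpha$ since $m=1$ there, and by Beck \cite{Beck} in the remaining cases, where it equals $m\prod_{\alpha\in\Delta^+}r_\alpha$ with $m$ as in \eqref{eq:m}. Combining this with $\deg U_\zeta\leqq\deg\Gr\, U_\zeta$ proves the proposition.

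The only genuine work is the last step: matching our conventions---the simply-connected form with $U^0\cong\BC[P]$, our $r_\alpha$ and $\eta_\alpha$, and the even-order subtleties---to the elementary-divisor computations of \cite{DP} and \cite{Beck}. The matrix $B$ is block-triangular, its $(a,b)$-block governed by the exponents $\eta_\alpha^{(\alpha^\vee,\beta)}$ of Proposition \ref{prop:rel-zeta} and its torus block by the pairings $(\lambda,\beta^\vee)$; the subset $\Delta^+_1=\{\alpha\in\Delta^+\mid\eta_\alpha=-1\}$ and the sublattice $P''$ of \eqref{eq:monomial2A} are precisely what produce the extra $2$-power factor $m$ in the even case. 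I expect this bookkeeping---reducing $B$ modulo $\ell$ and reading off its Smith normal form, exactly as in the references---rather than any new idea, to be the main obstacle.
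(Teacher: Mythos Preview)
Your proposal is correct and follows exactly the route the paper takes: the paper gives no self-contained proof either, but simply records (in the paragraph preceding the proposition) that the bound follows from the degeneration inequality $\deg U_\zeta\leqq\deg\Gr\,U_\zeta$ of \cite{DP} together with the elementary-divisor computation for $\Gr\,U_\zeta$ carried out in \cite{DP} for odd $\ell$ and in \cite{Beck} for the remaining cases. One small slip: Lemma~\ref{lem:rank2} is about $U_\zeta$, not $\Gr\,U_\zeta$; module-finiteness of the latter over its center is immediate from its description as a twisted polynomial algebra at a root of unity, so this does not affect the argument.
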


Let us show that $j$ is injective.
By Proposition \ref{prop:normal}
$Z_{\Fr}\otimes_{Z_{\Fr}\cap Z_{\Har}}Z_{\Har}$ is a domain.
Note also that $Z$ is a domain since $U_\zeta$ has no zero divisors.
Hence we have only to show that 
\[
j^*:\Spec \;Z\to\Spec\;Z_{\Fr}\otimes_{Z_{\Fr}\cap Z_{\Har}}Z_{\Har}
\]
has a dense image.
Consider the embedding $j':Z_{\Fr}\to Z$.
Since $j'$ is injective, $(j')^*:\Spec \;Z\to\Spec\;Z_{\Fr}$ has a dense image.
Note that $(j')^*$ is the composite of $j^*$ with the natural morphism
\[
\varphi:
\Spec\;Z_{\Fr}\otimes_{Z_{\Fr}\cap Z_{\Har}}Z_{\Har}
\to
\Spec\;Z_{\Fr}.
\]
Since 
$\Spec\;Z_{\Fr}\otimes_{Z_{\Fr}\cap Z_{\Har}}Z_{\Har}$ is irreducible and $\varphi$ is a finite morphism by Lemma \ref{lem:rank}, we conclude that $j^*$ must have a dense image.
The injectivity of $j$ is verified.

Set for simplicity
\[
Z'=Z_{\Fr}\otimes_{Z_{\Fr}\cap Z_{\Har}}Z_{\Har}.
\]
Then we have
\[
Z_{\Fr}\subset Z'\subset Z\subset U_\zeta.
\]
We need to show $Z'=Z$.

Assume that 
\begin{equation}
\label{eq:q5}
Q(Z)=Q(Z')
\end{equation}
holds.
Since 
$U_\zeta$ is a finitely generated $Z_{\Fr}$-module,
$Z$ is a finitely generated $Z'$-module.
It follows that $Z=Z'$ by Proposition \ref{prop:normal}.
Hence it is sufficient to show \eqref{eq:q5}.

Since
$Z'$ is a free $Z_{\Fr}$-module of rank $|P/P'|$, we have
$
[Q(Z'):Q(Z_{Fr})]\geqq |P/P'|$.
Hence it is sufficient to show 
\begin{equation}
\label{eq:q4}
[Q(Z):Q(Z_{Fr})]\leqq|P/P'|.
\end{equation}

Note that we have
$
Q(Z_{\Fr})\otimes_{Z_{\Fr}}Z\cong Q(Z)
$
since $Z$ is a finitely generated $Z_{\Fr}$-module.
Hence
\[
Q(Z_{Fr})\otimes _{Z_{Fr}}U_\zeta
\cong
Q(Z_{Fr})\otimes _{Z_{Fr}}Z\otimes_Z U_\zeta
\cong
Q(Z)\otimes _{Z}U_\zeta.
\]
Hence we obtain \eqref{eq:q4} by Lemma \ref{lem:rank2}, Proposition \ref{prop:rank3}.
The proof of Theorem \ref{thm:main} is complete.

\begin{corollary}
\label{cor:degree}
The degree of $U_\zeta$ is equal to
$m\prod_{\alpha\in\Delta^+}r_\alpha$, where $m$ is as in \eqref{eq:m}.
\end{corollary}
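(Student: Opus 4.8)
The plan is to read off the degree directly from the dimension computations already assembled in the course of proving Theorem \ref{thm:main}, so that almost no new work is needed. Recall that the degree $n$ of $U_\zeta$ is defined by $\dim_{Q(Z)}Q(Z)\otimes_Z U_\zeta=n^2$ (using that $U_\zeta$ is prime and, by Lemma \ref{lem:rank2}, a finitely generated $Z$-module), so it suffices to compute this dimension and extract its square root.

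First I would record the two inputs. From the proof of Theorem \ref{thm:main} we have the isomorphism $Q(Z_{\Fr})\otimes_{Z_{\Fr}}U_\zeta\cong Q(Z)\otimes_Z U_\zeta$, obtained by combining $Q(Z_{\Fr})\otimes_{Z_{\Fr}}Z\cong Q(Z)$ (valid since $Z$ is a finitely generated $Z_{\Fr}$-module) with $-\otimes_Z U_\zeta$. By Lemma \ref{lem:rank2} the left-hand side has $Q(Z_{\Fr})$-dimension $\left(m\prod_{\alpha\in\Delta^+}r_\alpha\right)^2\times|P/P'|$.

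Next I would pin down the field degree $[Q(Z):Q(Z_{\Fr})]$. By Theorem \ref{thm:main} we have $Z=Z_{\Fr}\otimes_{Z_{\Fr}\cap Z_{\Har}}Z_{\Har}$, and by Lemma \ref{lem:rank} this is a free $Z_{\Fr}$-module of rank $|P/P'|$. Localizing at $Z_{\Fr}\setminus\{0\}$ produces a $Q(Z_{\Fr})$-algebra of dimension $|P/P'|$ which is a domain by Proposition \ref{prop:normal}, hence a field, hence equal to $Q(Z)$; thus $[Q(Z):Q(Z_{\Fr})]=|P/P'|$.

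Finally, $Q(Z)\otimes_Z U_\zeta$ is a $Q(Z)$-vector space whose underlying $Q(Z_{\Fr})$-vector space was computed above, so dividing by $[Q(Z):Q(Z_{\Fr})]=|P/P'|$ gives $\dim_{Q(Z)}Q(Z)\otimes_Z U_\zeta=\left(m\prod_{\alpha\in\Delta^+}r_\alpha\right)^2$, whence $n=m\prod_{\alpha\in\Delta^+}r_\alpha$. I expect no real obstacle: the only point requiring care is that the extension degree is \emph{exactly} $|P/P'|$ and not merely bounded by it, which is precisely what the equality $Z=Z'$ in Theorem \ref{thm:main} supplies; everything else is bookkeeping with the dimension formulas of Lemma \ref{lem:rank2} and Lemma \ref{lem:rank}.
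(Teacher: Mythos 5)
Your proposal is correct and is essentially the argument the paper intends: Corollary \ref{cor:degree} is read off from the proof of Theorem \ref{thm:main} by combining the identification $Q(Z_{\Fr})\otimes_{Z_{\Fr}}U_\zeta\cong Q(Z)\otimes_Z U_\zeta$ and the dimension count of Lemma \ref{lem:rank2} with the equality $[Q(Z):Q(Z_{\Fr})]=|P/P'|$, which follows from $Z=Z_{\Fr}\otimes_{Z_{\Fr}\cap Z_{\Har}}Z_{\Har}$ and Lemma \ref{lem:rank}. Your bookkeeping, including the remark that the theorem upgrades the a priori inequality for the extension degree to an equality, matches the paper's (unwritten) proof.
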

\begin{remark}
{\rm
Corollary \ref{cor:degree} was proved by De Concini-Kac-Procesi \cite{DKP2} in the case $\ell$ is odd and by Beck \cite{Beck} in the case $\ell$ is divided by $4d$.
}
\end{remark}

\bibliographystyle{unsrt}

\end{document}